\newcommand{\e}{\mathrm{exp}}
\def\E{\mathop{\mbox{\textup{Exp}}}\nolimits}
\newcommand{\vphiE}{\mathit{Exp}}
\newcommand{\phiE}{\mathit{Exp}^{\prime}}
\def\Sin{\mathop{\mbox{\textup{Sin}}}\nolimits}
\newcommand{\vphiS}{\mathit{Sin}}
\newcommand{\phiS}{\mathit{Sin}^{\prime}}
\newcommand{\sinwt}{\widetilde{\sin}}
\def\Cos{\mathop{\mbox{\textup{Cos}}}\nolimits}
\newcommand{\vphiC}{\mathit{Cos}}
\newcommand{\phiC}{\mathit{Cos}^{\prime}}
\newcommand{\coswt}{\widetilde{\cos}}
\def\Tan{\mathop{\mbox{\textup{Tan}}}\nolimits}
\newcommand{\vphiT}{\mathit{Tan}}
\newcommand{\phiT}{\mathit{Tan}^{\prime}}
\newcommand{\tanwt}{\widetilde{\tan}}
\def\Csc{\mathop{\mbox{\textup{Csc}}}\nolimits}
\newcommand{\vphiCsc}{\mathit{Csc}}
\newcommand{\phiCsc}{\mathit{Csc}^{\prime}}
\newcommand{\cscwt}{\widetilde{\csc}}
\def\Sec{\mathop{\mbox{\textup{Sec}}}\nolimits}
\newcommand{\vphiSec}{\mathit{Sec}}
\newcommand{\phiSec}{\mathit{Sec}^{\prime}}
\newcommand{\secwt}{\widetilde{\sec}}
\def\Cot{\mathop{\mbox{\textup{Cot}}}\nolimits}
\newcommand{\vphiCot}{\mathit{Cot}}
\newcommand{\phiCot}{\mathit{Cot}^{\prime}}
\newcommand{\cotwt}{\widetilde{\cot}}
\def\sech{\mathop{\mbox{\textup{sech}}}\nolimits}
\def\csch{\mathop{\mbox{\textup{csch}}}\nolimits}
\newcommand{\fibonomial}{\genfrac{\{}{\}}{0pt}{}}
\newcommand{\C}{\mathbb{C}}
\newcommand{\N}{\mathbb{N}}
\newcommand{\R}{\mathbb{R}}
\newcommand{\D}{\mathbf{D}}
\newtheorem{theorem}{Theorem}
\newtheorem{definition}{Definition}
\newtheorem{proposition}{Proposition}
\newtheorem{corollary}{Corollary}
\newtheorem{example}{Example}
\begin{document}

\title{\textbf{Lucas-Pantograph Type Exponential, Trigonometric, and Hyperbolic Functions}}
\author{Ronald Orozco L\'opez}
\newcommand{\Addresses}{{
  \bigskip
  \footnotesize

  \textit{E-mail address}, R.~Orozco: \texttt{rj.orozco@uniandes.edu.co}
  
}}

\maketitle
\tableofcontents

\begin{abstract}
In this paper, we include some new results for the Lucas calculus. A Lucas-Pantograph type exponential function is introduced. Additionally, we define Lucas-Pantograph type trigonometric functions, and some of their most notable identities are given: parity, sum and difference formulas, Pythagorean identities, double-angle identities, and some special values. Lucas-Pantograph type hyperbolic functions are also introduced.
\end{abstract}
\noindent 2020 {\it Mathematics Subject Classification}:
Primary 05A30. Secondary 33B10, 11B39.

\noindent \emph{Keywords: } Lucas-calculus, Lucas-Pantograph type exponential function, Lucas-Pantograph type trigonometric functions, Lucas-Pantograph type hyperbolic functions.

\section{Introduction}

In this paper, we introduce the Lucas-Pantograph exponential function $\e_{s,t}(x,u)$ defined by
\begin{equation}\label{LP-exp}
    \e_{s,t}(x,u)=\sum_{n=0}^{\infty}u^{\binom{n}{2}}\frac{x^n}{\brk[c]{n}_{s,t}!}
\end{equation}
where $\brk[c]{n}_{s,t}$ are the Lucas sequences. The function (\ref{LP-exp}) is the Lucas analogue of the Pantograph exponential function
\begin{equation}\label{P-exp}
    \e(x,u)=\sum_{n=0}^{\infty}u^{\binom{n}{2}}\frac{x^n}{n!}
\end{equation}
The Pantograph exponential function Eq.(\ref{P-exp}) is solution of the Pantograph functional-differential equation \cite{ise_1,ise_2}
\begin{equation}\label{eqn_sokal}
    f^{\prime}(x)=f(ux),\ \ f(0)=1.
\end{equation}
The Eq.(\ref{eqn_sokal}) is closely related to the generating function for the Tutte polynomials of the complete graph $K_n$ in combinatorics, the Whittaker and Goncharov constant in complex analysis, and the partition function of one-site lattice gas with fugacity $x$ and two-particle Boltzmann weight $q$ in statistical mechanics \cite{sokal}. It would be very interesting to find analogous applications to the above when the parameters $(s,t)$ in the sequence $\brk[c]{n}_{s,t}$ correspond to a calculus defined on the sequences of Fibonacci, Pell, Jacobsthal, Chebysheff, Mersenne, among others.

In this paper, we introduce the Pantograph type Lucas-analog of the binomial theorem
\begin{equation*}
    (x+y)^{n}=\sum_{k=0}^{n}\binom{n}{k}x^ky^{n-k}
\end{equation*}
as the polynomial given by
\begin{equation}\label{eqn_st_bin_theo}
    (x\oplus_{u,v}y)^{(n)}=\sum_{k=0}^{n}\fibonomial{n}{k}_{s,t}u^{\binom{k}{2}}v^{\binom{n-k}{2}}a^kb^{n-k}
\end{equation}
to find an expression for the product 
\begin{equation*}
    \exp_{s,t}(az,u)\exp_{s,t}(bz,v).
\end{equation*}
In addition, we define Lucas-analog of the
Euler's formula 
\begin{equation}\label{eqn_euler}
    e^{ix}=\cos x+i\sin x,
\end{equation}
and of the identity 
\begin{equation}\label{eqn_euler2}
    e^{x+iy}=e^{x}(\cos x+i\sin y).
\end{equation}
An important fact about the functions $\exp_{s,t}(x,u)$ is that if the parameter $u$ takes negative values, then we obtain Lucas-analogues of Eqs. (\ref{eqn_euler}) and (\ref{eqn_euler2}), but without the complex unit $i=\sqrt{-1}$.

Additionally, we define Lucas-Pantograph type trigonometric functions and some of the its most notable identities are given: parity, sum and difference formulas, Pythagorean identities, double-angle identities, and some special values. Lucas-Pantograph type hyperbolic functions are introduced.

In our work, we will use the identities for binomial coefficients:
\begin{align}
    \binom{n+k}{2}&=\binom{n}{2}+\binom{k}{2}+nk,\label{iden7}\\
    \binom{n-k}{2}&=\binom{n}{2}+\binom{k}{2}+k(1-n).\label{iden8}
\end{align}

\section{Calculus on Lucas sequences}

\subsection{Lucas sequences}

The Lucas sequences, depending on the variables $s,t$, are defined by
\begin{align*}
    \brk[c]{0}_{s,t}&=0,\\
    \brk[c]{1}_{s,t}&=1,\\
    \brk[c]{n+2}_{s,t}&=s\{n+1\}_{s,t}+t\{n\}_{s,t}.
\end{align*}
From the Lucas sequence we can obtain the sequences of Fibonacci, Pell, Jacobsthal, Mersenne, $q$-numbers and Chebysheff polynomials of second kind. Another sequence related to Lucas sequences is the sequence $\brk[a]{n}_{s,t}$ defined by
\begin{align*}
    \brk[a]{0}_{s,t}&=2,\\
    \brk[a]{1}_{s,t}&=s,\\
    \brk[a]{n+2}_{s,t}&=s\brk[a]{n+1}_{s,t}+t\brk[a]{n}_{s,t}.
\end{align*}
From the sequence $\brk[a]{n}_{s,t}$ we obtain the numbers of Lucas, Lucas-Pell, Lucas-Jacobsthal, Fermat, and Chebysheff polynomials of first kind. 
The Lucas constant $\varphi$ is the ratio toward which adjacent Lucas sequences tend. This is the only positive root of $x^{2}-sx-t=0$, where
\begin{equation*}
    \varphi=\frac{s+\sqrt{s^{2}+4t}}{2}
\end{equation*}
and its conjugate is
\begin{equation*}
    \phi=s-\varphi=-\frac{t}{\varphi}=\frac{s-\sqrt{s^{2}+4t}}{2}.
\end{equation*}
In the remainder of the paper, we will assume that $s,t\in\C$, $s\neq0$ and $t\neq0$. The Binet's formula for $\brk[c]{n}_{s,t}$ is
\begin{equation}\label{eqn_binet}
    \brk[c]{n}_{s,t}=
    \begin{cases}
    \frac{\varphi^{n}-\phi^{n}}{\varphi-\phi},&\text{ if }s\neq\pm2i\sqrt{t};\\
    n(\pm i\sqrt{t})^{n-1},&\text{ if }s=\pm2i\sqrt{t}.
    \end{cases}
\end{equation}
The Lucasnomial coefficients are defined by
\begin{equation}
    \fibonomial{n}{k}_{s,t}=\frac{\brk[c]{n}_{s,t}!}{\brk[c]{k}_{s,t}!\brk[c]{n-k}_{s,t}!}.\label{eqn_fibo3}
\end{equation}
where $\brk[c]{n}_{s,t}!=\brk[c]{1}_{s,t}\brk[c]{2}_{s,t}\cdots\brk[c]{n}_{s,t}$ is the Lucastorial. The Lucasnomial satisfy the following Pascal recurrence relationships. For $1\leq k\leq n-1$ it is true that
\begin{align}
    \fibonomial{n+1}{k}_{s,t}&=\varphi_{s,t}^{k}\fibonomial{n}{k}_{s,t}+\varphi_{s,t}^{\prime(n+1-k)}\fibonomial{n}{k-1}_{s,t}\label{prop_pascal1},\\
    &=\varphi_{s,t}^{\prime(k)}\fibonomial{n}{k}_{s,t}+\varphi_{s,t}^{n+1-k}\fibonomial{n}{k-1}_{s,t}\label{prop_pascal2}.
\end{align}

\subsection{Lucas-derivative}

\begin{definition}
Set $s,t\in\R$, $s\neq0, t\neq0$. We define the Lucas-derivative $\D_{s,t}$ of the function $f(x)$ as
\begin{equation}
    \D_{s,t}f(x)=\frac{f(\varphi x)-f(\phi x)}{(\varphi-\phi)x}
\end{equation}
for all $x\neq0$ and $(\mathbf{D}_{s,t}f)(0)=f^{\prime}(0)$, provided $f^{\prime}(0)$ exists.
\end{definition}

\begin{example}
For all $s,t\in\R$, $s\neq0,t\neq0$,  and for all $n,k\in\N$,
\begin{equation*}
    \D_{s,t}x^n=\brk[c]{n}_{s,t}x^{n-1}
\end{equation*}
and
\begin{equation}
    \D_{s,t}^kx^n=\frac{\brk[c]{n}_{s,t}!}{\brk[c]{n-k}_{s,t}!}x^{n-k}.
\end{equation}
\end{example}
The following results on the Lucas-derivative are standard.
\begin{itemize}
    \item Linearity. For all functions $f(x)$ and $g(x)$ and for $\alpha\in\C$,
    \begin{align*}
    \mathbf{D}_{s,t}(f(x)+g(x))&=(\mathbf{D}_{s,t}f)(x)+(\mathbf{D}_{s,t}g)(x),\\
    \mathbf{D}_{s,t}(\alpha f(x))&=\alpha(\mathbf{D}_{s,t}f)(x).
\end{align*}
\item Product rule. For all functions $f(x)$ and $g(x)$,
\begin{align}
    \mathbf{D}_{s,t}(f(x)g(x))=f(\varphi x)(\mathbf{D}_{s,t}g)(x)+g(\phi x)(\mathbf{D}_{s,t}f)(x),\label{Leib1}
\end{align}
and
\begin{equation}
    \mathbf{D}_{s,t}(f(x)g(x))=f(\phi x)(\mathbf{D}_{s,t}g)(x)+g(\varphi x)(\mathbf{D}_{s,t}f)(x).\label{Leib2}    
\end{equation}
\item Quotient rule. For all functions $f(x)$ and $g(x)$,
\begin{align}
     \mathbf{D}_{s,t}\left(\frac{f(x)}{g(x)}\right)&=\frac{g(\varphi_{s,t}x)\mathbf{D}_{s,t}f(x)-f(\varphi_{s,t}x)\mathbf{D}_{s,t}g(x)}{g(\varphi_{s,t}x)g(\varphi_{s,t}^{\prime}x)}
\end{align}
and
\begin{align}
     \mathbf{D}_{s,t}\left(\frac{f(x)}{g(x)}\right)&=\frac{g(\varphi_{s,t}^{\prime}x)\mathbf{D}_{s,t}f(x)-f(\varphi_{s,t}^{\prime}x)\mathbf{D}_{s,t}g(x)}{g(\varphi_{s,t}x)g(\varphi_{s,t}^{\prime}x)}.
\end{align}
\end{itemize}

\subsection{Lucas-Integral}

Let $f$ be an arbitrary function and $a$ be a real number. We define the following Lucas-integrals of $f$ as
\begin{align}
    \int_{0}^{a} f(x)d_{s,t}x&=(\varphi-\phi)a\sum_{k=0}^{\infty}\frac{\phi^{k}}{\varphi^{k+1}}f\left(\frac{\phi^{k}}{\varphi^{k+1}}a\right)\text{ if  }\ \Big\vert\frac{\phi}{\varphi}\Big\vert>1,\label{int1}\\
    \int_{0}^{a}f(x)d_{s,t}x&=(\phi-\varphi)a\sum_{k=0}^{\infty}\frac{\varphi^{k}}{\phi^{k+1}}f\left(\frac{\varphi^{k}}{\phi^{k+1}}a\right)\text{ if  }\ \Big\vert\frac{\varphi}{\phi}\Big\vert<1.\label{int2}
\end{align}
If $a$ and $b$ are two nonnegative numbers such that $a<b$, then
\begin{align*}
    \int_{a}^{b}f(x)d_{p,q}x&=\int_{0}^{b}f(x)d_{p,q}x-\int_{0}^{a}f(x)d_{p,q}x.
\end{align*}
Following Eq. (\ref{int1}), the Lucas-integral $f$ is 
\begin{equation}\label{eqn_int_def}
    \int_{a}^{b}f(x)d_{s,t}x=(\varphi-\phi)\sum_{n=0}^{\infty}\Bigg[bf\left(b\frac{\phi^n}{\varphi^{n+1}}\right)-af\left(a\frac{\phi^n}{\varphi^{n+1}}\right)\Bigg]\frac{\phi^n}{\varphi^{n+1}}.
\end{equation}
We will call $f$ a function Lucas-integrable if the series in Eq.(\ref{eqn_int_def}) is convergent. 

{\bf The fundamental theorem of Lucas-calculus}.
If $F(x)$ is an antiderivative of $f(x)$ and $F(x)$ is continuous at $x=0$, then
    \begin{equation}\label{theo_funda}
        \int_{a}^{b}f(x)d_{s,t}x=F(b)-F(a),
    \end{equation}
where $0\leq a<b\leq\infty$.

{\bf The Lucas-integration by parts formula}. 
Suppose $f(x)$ and $g(x)$ are two functions whose ordinary derivatives exist in a neighborhood of $x=0$ and are continuous at $x=0$. Then
\begin{equation}\label{theo_partes2}
    \int_{a}^{b}(\mathbf{D}_{s,t}f)(x)g(\phi x)d_{s,t}x=\big[f\cdot g\big]_{a}^{b}
    -\int_{a}^{b}f(\varphi x)(\mathbf{D}_{s,t}g)(x)d_{s,t}x.
\end{equation}

\section{$\mathbf{u}$-deformed multinomial numbers}

\subsection{$(u,v)$-deformed Lucasnomial theorem}

\begin{definition}\label{def_lucas_theo}
For all nonzero complex numbers $u,v\in\C$, define the $(u,v)$-deformed Lucasnomial theorem, for $x,y$ commuting, as
\begin{equation}
(x\oplus_{u,v}y)_{s,t}^{(n)}=\sum_{k=0}^{n}\fibonomial{n}{k}_{s,t}u^{\binom{n-k}{2}}v^{\binom{k}{2}}x^{n-k}y^{k}\ \ \ \ n\geq0.
\end{equation}
Also, define
\begin{align*}
    (x\oplus_{u,0}y)^{(n)}&=\lim_{v\rightarrow0}(x\oplus_{u,v}y)^{(n)},\ \ \ u\neq0,\\
    (x\oplus_{0,v}y)^{(n)}&=\lim_{u\rightarrow0}(x\oplus_{u,v}y)^{(n)},\ \ \ v\neq0,\\
    (x\oplus_{0,0}y)^{(n)}&=\lim_{u\rightarrow0}\lim_{v\rightarrow0}(x\oplus_{u,v}y)^{(n)}.
\end{align*}
The $(u,v)$-deformed Lucas-analogue of $(x-y)^n$ is defined by
\begin{equation*}
    (x\ominus_{u,v}y)^{(n)}\equiv (x\oplus_{u,v}(-y))^{(n)}.
\end{equation*}
\end{definition}

\begin{example}
\begin{align*}
    (x\oplus_{u,v}y)_{s,t}^{(0)}&=1,\\
    (x\oplus_{u,v}y)_{s,t}^{(1)}&=x+y,\\
    (x\oplus_{u,v}y)_{s,t}^{(2)}&=ux^{2}+\brk[c]{2}_{s,t}xy+vy^{2},\\
    (x\oplus_{u,v}y)_{s,t}^{(3)}&=u^{3}x^{3}+\brk[c]{3}_{s,t}ux^{2}y+\brk[c]{3}_{s,t}vxy^{2}+v^{3}y^{3},\\
    (x\oplus_{u,v}y)_{s,t}^{(4)}&=u^{6}x^{4}+\brk[c]{4}_{s,t}u^{3}x^{3}y+\brk[c]{3}_{s,t}\brk[a]{2}_{s,t}uvx^{2}y^{2}+\brk[c]{4}_{s,t}v^{3}xy^{3}+v^{6}y^{4}.
\end{align*}    
\end{example}
If $u=\varphi$ and $v=\phi$ in Definition \ref{def_lucas_theo}, then we define
\begin{align}
(x\oplus y)_{\varphi,\phi}^{n}\equiv(x\oplus_{\varphi,\phi}y)_{s,t}^{(n)}&=
\begin{cases}
1,& \text{ if }n=0;\\
\prod_{k=0}^{n-1}(\varphi^{k}x+\phi^{k}y),& \text{ if }n\geq1.
\end{cases}\label{eqn_vphi_power}
\end{align}

\begin{theorem}\label{theo_bin_neg}
For all $x,y,u,v\in\C$,
    \begin{align}
        (x\oplus_{-u,-v}y)_{s,t}^{(2n)}&=(-1)^{n}(x\ominus_{u,v}y)_{s,t}^{(2n)}.\\
        (x\oplus_{-u,-v}y)_{s,t}^{(2n+1)}&=(-1)^{n}(x\oplus_{u,v}y)_{s,t}^{(n)}.
    \end{align}
\end{theorem}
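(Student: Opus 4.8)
The plan is to prove both identities by a direct term-by-term comparison of the defining sums in Definition~\ref{def_lucas_theo}, reducing everything to a single parity statement about binomial coefficients that follows from \eqref{iden7}--\eqref{iden8}. First I would expand the left-hand side uniformly in the degree $m$ (with $m=2n$ or $m=2n+1$):
\begin{align*}
(x\oplus_{-u,-v}y)_{s,t}^{(m)}
&=\sum_{k=0}^{m}\fibonomial{m}{k}_{s,t}(-u)^{\binom{m-k}{2}}(-v)^{\binom{k}{2}}x^{m-k}y^{k}\\
&=\sum_{k=0}^{m}\fibonomial{m}{k}_{s,t}(-1)^{\binom{m-k}{2}+\binom{k}{2}}\,u^{\binom{m-k}{2}}v^{\binom{k}{2}}x^{m-k}y^{k}.
\end{align*}
Thus the theorem hinges entirely on evaluating the sign $\varepsilon_{m,k}:=(-1)^{\binom{m-k}{2}+\binom{k}{2}}$.

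The key computation is the parity of $\binom{m-k}{2}+\binom{k}{2}$. Applying \eqref{iden8} with $n$ replaced by $m$ gives $\binom{m-k}{2}=\binom{m}{2}+\binom{k}{2}+k(1-m)$, hence
\begin{align*}
\binom{m-k}{2}+\binom{k}{2}
&=\binom{m}{2}+2\binom{k}{2}+k(1-m)\\
&\equiv \binom{m}{2}+k(1-m)\pmod 2.
\end{align*}
For $m=2n$ this is $\binom{2n}{2}+k\equiv n(2n-1)+k\equiv n+k\pmod 2$, so $\varepsilon_{2n,k}=(-1)^{n+k}$; for $m=2n+1$ the term $k(1-m)=-2nk$ is even and $\binom{2n+1}{2}=n(2n+1)\equiv n\pmod 2$, so $\varepsilon_{2n+1,k}=(-1)^{n}$, \emph{independent of $k$}.

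Plugging these back finishes the proof. In the even case the factor $(-1)^{n+k}$ splits as $(-1)^n$ times $(-1)^k$, and the $(-1)^k$ is absorbed into $y^k$; since $(x\ominus_{u,v}y)^{(2n)}=\sum_{k}\fibonomial{2n}{k}_{s,t}u^{\binom{2n-k}{2}}v^{\binom{k}{2}}x^{2n-k}(-y)^{k}$, the resulting sum is exactly $(-1)^n(x\ominus_{u,v}y)^{(2n)}$. In the odd case the constant sign $(-1)^n$ factors out of the whole sum, leaving $(-1)^n(x\oplus_{u,v}y)^{(2n+1)}$ (the degree of the right-hand member is $2n+1$, matching the left).

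The main obstacle is nothing deep: it is purely the parity bookkeeping for $\binom{m-k}{2}+\binom{k}{2}$, and the one point that must be stated carefully is the pair of congruences $\binom{2n}{2}\equiv n$ and $\binom{2n+1}{2}\equiv n\pmod 2$, which is precisely what makes the even-degree sign depend on $k$ while the odd-degree sign does not. An alternative route would be induction on $n$ using the Pascal recurrences \eqref{prop_pascal1}--\eqref{prop_pascal2}, but the direct computation above is shorter and makes the role of \eqref{iden7}--\eqref{iden8} transparent.
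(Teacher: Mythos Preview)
Your proof is correct and follows essentially the same route as the paper: expand via Definition~\ref{def_lucas_theo}, isolate the sign $(-1)^{\binom{m-k}{2}+\binom{k}{2}}$, and simplify it using \eqref{iden8} before splitting on the parity of $m$. The paper packages the result as the single formula $(-1)^{\binom{m}{2}}(x\oplus_{u,v}(-1)^{1-m}y)_{s,t}^{(m)}$ and then reads off the two cases, whereas you compute $\varepsilon_{2n,k}=(-1)^{n+k}$ and $\varepsilon_{2n+1,k}=(-1)^{n}$ separately; these are the same computation. You also correctly note that the right-hand side of the odd-degree identity should carry the exponent $(2n+1)$ rather than $(n)$, which is a typo in the stated theorem.
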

\begin{proof}
We use the Definition \ref{def_lucas_theo} and the Eq.(\ref{iden8})
    \begin{align*}
        (x\oplus_{-u,-v}y)_{s,t}^{(n)}&=\sum_{k=0}^{n}\fibonomial{n}{k}_{s,t}(-u)^{\binom{n-k}{2}}(-v)^{\binom{k}{2}}x^{n-k}y^{k}\\
        &=\sum_{k=0}^{n}\fibonomial{n}{k}_{s,t}u^{\binom{n-k}{2}}v^{\binom{k}{2}}(-1)^{\binom{n-k}{2}+\binom{k}{2}}x^{n-k}y^{k}\\
        &=(-1)^{\binom{n}{2}}\sum_{k=0}^{n}\fibonomial{n}{k}_{s,t}u^{\binom{n-k}{2}}v^{\binom{k}{2}}x^{n-k}(-1)^{k(1-n)}y^{k}\\
        &=(-1)^{\binom{n}{2}}(x\oplus_{u,v}(-1)^{1-n}y)_{s,t}^{(n)}.
    \end{align*}
Finally, we use the parity of $n$.
\end{proof}

\begin{theorem}\label{theo_propi_uvFbinom}
For all $x,y,z,u,v\in\C$ the $(u,v)$-deformed Lucasnomial formula has the following properties
\begin{enumerate}
    \item $(x\oplus_{u,v}y)_{s,t}^{(n+1)}=x(ux\oplus_{u,v}\varphi y)_{s,t}^{(n)}+y(\phi x\oplus_{u,v}vy)_{s,t}^{(n)}$.
    \item $(x\oplus_{u,v}y)_{s,t}^{(n+1)}=x(ux\oplus_{u,v}\phi y)_{s,t}^{(n)}+y(\varphi x\oplus_{u,v}vy)_{s,t}^{(n)}$.
    \item $(x\oplus_{au,av}y)^{(n)}=a^{\binom{n}{2}}(x\oplus_{u,v}y)^{(n)}$.
    \item $(x\oplus_{u,v}y)^{(n)}=(y\oplus_{v,u}x)^{(n)}$,
    \item $z^n(x\oplus_{u,v}y)^{(n)}=(zx\oplus_{u,v}zy)^{(n)}$.
    \item $(x\oplus_{u,v}0)^{(\alpha)}=u^{\binom{\alpha}{2}}x^\alpha$, for all $x\in\C$.
    \item $(0\oplus_{u,v}y)^{(\alpha)}=v^{\binom{\alpha}{2}}y^\alpha$, for all $y\in\C$.
\end{enumerate}
\end{theorem}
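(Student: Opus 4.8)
The plan is to derive all seven identities directly from the defining sum of Definition~\ref{def_lucas_theo}, namely $(x\oplus_{u,v}y)_{s,t}^{(n)}=\sum_{k=0}^{n}\fibonomial{n}{k}_{s,t}u^{\binom{n-k}{2}}v^{\binom{k}{2}}x^{n-k}y^{k}$, using the two Pascal recurrences (\ref{prop_pascal1})--(\ref{prop_pascal2}) for parts (1)--(2), the symmetry $\fibonomial{n}{k}_{s,t}=\fibonomial{n}{n-k}_{s,t}$ built into (\ref{eqn_fibo3}) for part (4), and the binomial identities (\ref{iden7})--(\ref{iden8}) throughout --- in fact only their special cases $\binom{m+1}{2}=\binom{m}{2}+m$ will be needed.

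For (1) I would expand $(x\oplus_{u,v}y)_{s,t}^{(n+1)}$ and insert (\ref{prop_pascal1}), $\fibonomial{n+1}{k}_{s,t}=\varphi^{k}\fibonomial{n}{k}_{s,t}+\phi^{n+1-k}\fibonomial{n}{k-1}_{s,t}$, splitting the result into two sums. In the first sum the factor $\varphi^{k}$ travels with $y^{k}$; pulling one $x$ to the front and rewriting $u^{\binom{n+1-k}{2}}x^{n-k}$ as $u^{\binom{n-k}{2}}(ux)^{n-k}$ via $\binom{n+1-k}{2}=\binom{n-k}{2}+(n-k)$ turns it into $x\,(ux\oplus_{u,v}\varphi y)_{s,t}^{(n)}$. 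In the second sum I shift $k\mapsto k+1$, pull one $y$ to the front, let $\phi^{n-k}$ travel with $x^{n-k}$, and rewrite $v^{\binom{k+1}{2}}y^{k}$ as $v^{\binom{k}{2}}(vy)^{k}$ via $\binom{k+1}{2}=\binom{k}{2}+k$, obtaining $y\,(\phi x\oplus_{u,v}vy)_{s,t}^{(n)}$. Part (2) is the same computation run with the companion recurrence (\ref{prop_pascal2}), which interchanges the roles of $\varphi$ and $\phi$.

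Parts (3)--(7) are then read off the defining sum term by term: for (5) distribute $z^{n}=z^{n-k}z^{k}$ into $x^{n-k}$ and $y^{k}$; for (4) apply $k\mapsto n-k$ together with $\fibonomial{n}{k}_{s,t}=\fibonomial{n}{n-k}_{s,t}$, which swaps $u\leftrightarrow v$ and $x\leftrightarrow y$; for (3) factor $a^{\binom{n-k}{2}}a^{\binom{k}{2}}$ out of $(au)^{\binom{n-k}{2}}(av)^{\binom{k}{2}}$ and collect the exponent with (\ref{iden7})--(\ref{iden8}); and for (6) and (7) set $y=0$ (resp. $x=0$), whereupon every term carrying a positive power of the vanishing variable drops and only the $k=0$ (resp. $k=n$) term survives, leaving $u^{\binom{n}{2}}x^{n}$ (resp. $v^{\binom{n}{2}}y^{n}$), the limit clauses of Definition~\ref{def_lucas_theo} absorbing the degenerate parameter values. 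The only place demanding care is the index bookkeeping in (1)--(2): keeping straight which of $\varphi,\phi,u,v$ attaches to which monomial after the shift. A useful sanity check is to test $n=0,1,2,3$ against the Example following Definition~\ref{def_lucas_theo}, and, for $u=\varphi$, $v=\phi$, against the product formula (\ref{eqn_vphi_power}).
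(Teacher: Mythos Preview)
Your proposal for parts (1)--(2) is exactly the paper's argument: expand $(x\oplus_{u,v}y)_{s,t}^{(n+1)}$, apply the appropriate Pascal recurrence (\ref{prop_pascal1}) or (\ref{prop_pascal2}), split into two sums, and reindex; for parts (3)--(7) the paper likewise simply declares them ``trivial'' without further detail, so your term-by-term verification is at least as complete.

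One warning on part (3): the step you outline --- factoring $a^{\binom{n-k}{2}+\binom{k}{2}}$ and collapsing the exponent to $\binom{n}{2}$ via (\ref{iden7})--(\ref{iden8}) --- does not go through, because in fact $\binom{n-k}{2}+\binom{k}{2}=\binom{n}{2}-k(n-k)$, not $\binom{n}{2}$. This is not a flaw in your method but in the statement: identity (3) as printed is false already at $n=2$, where the cross term $\{2\}_{s,t}\,xy$ carries $a^{0}$ on the left-hand side but $a^{\binom{2}{2}}=a$ on the right. Neither your sketch nor the paper's one-word dismissal actually establishes (3), and it cannot be established as written.
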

\begin{proof}
By Definition \ref{def_lucas_theo},
\begin{align}
    (x\oplus_{u,v}y)_{s,t}^{(n+1)}&=\sum_{k=0}^{n}\fibonomial{n+1}{k}_{s,t}u^{\binom{n+1-k}{2}}v^{\binom{k}{2}}x^{n+1-k}y^{k}.
\end{align}
By extracting the first summand and by using Eq.(\ref{prop_pascal1}), we obtain
\begin{align*}
    &(x\oplus_{u,v}y)_{s,t}^{(n+1)}\\
    &\hspace{1cm}=u^{\binom{n+1}{2}}x^{n+1}
    +\sum_{k=1}^{n-1}\left(\varphi^{k}\fibonomial{n}{k}_{s,t}+\phi^{n-k+1}\fibonomial{n}{k-1}_{s,t}\right)v^{\binom{k}{2}}u^{\binom{n+1-k}{2}}y^{k}x^{n+1-k}\\
    &\hspace{1cm}=u^{\binom{n+1}{2}}x^{n+1}
    +x\sum_{k=1}^{n-1}\fibonomial{n}{k}_{s,t}v^{\binom{k}{2}}u^{\binom{n-k}{2}}(\varphi y)^{k}(u x)^{n-k}\\
    &\hspace{4cm}+\sum_{k=1}^{n-1}\phi^{n-k+1}\fibonomial{n}{k-1}_{s,t}v^{\binom{k}{2}}u^{\binom{n+1-k}{2}}y^{k}x^{n+1-k}.
\end{align*}
In addition, we rearrange the second summation for $k$
\begin{align*}
    &(x\oplus_{u,v}y)_{s,t}^{(n+1)}\\
    &\hspace{1cm}=u^{\binom{n+1}{2}}x^{n+1}
    +x\sum_{k=1}^{n-1}\fibonomial{n}{k}_{s,t}v^{\binom{k}{2}}u^{\binom{n-k}{2}}(\varphi y)^{k}(u x)^{n-k}\\
    &\hspace{2cm}+y\sum_{k=0}^{n}\fibonomial{n}{k}_{s,t}v^{\binom{k}{2}}u^{\binom{n-k}{2}}(v y)^{k}(\phi x)^{n-k}\\
    &\hspace{1cm}=x\sum_{k=0}^{n}\fibonomial{n}{k}_{s,t}v^{\binom{k}{2}}u^{\binom{n-k}{2}}(\varphi y)^{k}(u x)^{n-k}
    +y\sum_{k=0}^{n}\fibonomial{n}{k}_{s,t}v^{\binom{k}{2}}u^{\binom{n-k}{2}}(vy)^{k}(\phi x)^{n-k}\\
    &\hspace{1cm}=x(ux\oplus_{u,v}\varphi_{s,t}y)_{s,t}^{(n)}+y(\varphi_{s,t}^{\prime}x\oplus_{u,v}vy)_{s,t}^{(n)}
\end{align*}
and thus we obtain the first statement. By using Lucas-Pascal formula, Eq.(\ref{prop_pascal2}), we obtain 2. The proof of $3$, $4$, $5$, $6$, and $7$ are trivials.
\end{proof}

\begin{theorem}\label{theo_der_bino}
For all $n\in\N$
\begin{enumerate}
    \item $\mathbf{D}_{s,t}(x\oplus_{u,v}a)_{s,t}^{(n)}=\brk[c]{n}_{s,t}(ux\oplus_{u,v}a)_{s,t}^{(n-1)}$.
    \item $\mathbf{D}_{s,t}(a\oplus_{u,v}x)_{s,t}^{(n)}=\brk[c]{n}_{s,t}(a\oplus_{u,v}vx)_{s,t}^{(n-1)}$.
    \item $\mathbf{D}_{s,t}(a\ominus_{u,v}x)_{s,t}^{(n)}=-\brk[c]{n}_{s,t}(a\ominus_{u,v}vx)_{s,t}^{(n-1)}$.
\end{enumerate}
\end{theorem}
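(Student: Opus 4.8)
The plan is to prove all three identities by term-by-term Lucas-differentiation of the defining sum in Definition~\ref{def_lucas_theo}, and then reduce parts 2 and 3 to part 1 by the symmetry property and a sign computation.

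For part 1, I would start from
$(x\oplus_{u,v}a)_{s,t}^{(n)}=\sum_{k=0}^{n}\fibonomial{n}{k}_{s,t}u^{\binom{n-k}{2}}v^{\binom{k}{2}}x^{n-k}a^{k}$,
regarding $a,u,v$ as constants, and apply $\mathbf{D}_{s,t}$ using linearity together with $\mathbf{D}_{s,t}x^{m}=\brk[c]{m}_{s,t}x^{m-1}$ from the Example. The $k=n$ term vanishes because $\brk[c]{0}_{s,t}=0$, leaving a sum over $0\le k\le n-1$ whose $k$-th coefficient carries the factor $\fibonomial{n}{k}_{s,t}\brk[c]{n-k}_{s,t}$. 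The key algebraic step is the absorption identity $\fibonomial{n}{k}_{s,t}\brk[c]{n-k}_{s,t}=\brk[c]{n}_{s,t}\fibonomial{n-1}{k}_{s,t}$, which pulls $\brk[c]{n}_{s,t}$ out of the sum and lowers the top index, giving $\brk[c]{n}_{s,t}\sum_{k=0}^{n-1}\fibonomial{n-1}{k}_{s,t}u^{\binom{n-k}{2}}v^{\binom{k}{2}}x^{n-1-k}a^{k}$. I would then recognize this as $\brk[c]{n}_{s,t}(ux\oplus_{u,v}a)_{s,t}^{(n-1)}$ by writing $u^{\binom{n-k}{2}}=u^{\binom{n-1-k}{2}}\,u^{\,n-1-k}$ (that is, $\binom{m+1}{2}=\binom{m}{2}+m$ with $m=n-1-k$) and absorbing $u^{\,n-1-k}$ into $(ux)^{\,n-1-k}$.

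For part 2, the quickest route uses property 4 of Theorem~\ref{theo_propi_uvFbinom}, $(a\oplus_{u,v}x)^{(n)}=(x\oplus_{v,u}a)^{(n)}$: apply part 1 with $(u,v)$ replaced by $(v,u)$ to get $\mathbf{D}_{s,t}(x\oplus_{v,u}a)^{(n)}=\brk[c]{n}_{s,t}(vx\oplus_{v,u}a)^{(n-1)}$, then swap back with property 4 to obtain $\brk[c]{n}_{s,t}(a\oplus_{u,v}vx)^{(n-1)}$. (Equivalently, one repeats the computation of part 1 differentiating in the second slot, where the relevant absorption is $\fibonomial{n}{k}_{s,t}\brk[c]{k}_{s,t}=\brk[c]{n}_{s,t}\fibonomial{n-1}{k-1}_{s,t}$, followed by the reindexing $k\mapsto k+1$.) For part 3, note that $(a\ominus_{u,v}x)^{(n)}=f(-x)$ where $f(x)=(a\oplus_{u,v}x)^{(n)}$, and that straight from the definition of $\mathbf{D}_{s,t}$ one has $\mathbf{D}_{s,t}\big(f(-x)\big)=-(\mathbf{D}_{s,t}f)(-x)$; combining this with part 2 gives $\mathbf{D}_{s,t}(a\ominus_{u,v}x)^{(n)}=-\brk[c]{n}_{s,t}(a\oplus_{u,v}(-vx))^{(n-1)}=-\brk[c]{n}_{s,t}(a\ominus_{u,v}vx)^{(n-1)}$. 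One could also obtain part 3 directly, tracking the extra factor $(-1)^{k}$ through $v^{\binom{k}{2}}$.

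The reindexing is routine; the one place to be careful — the main obstacle — is matching the $u$- and $v$-exponents after the shift of summation index, i.e. verifying that $\binom{n-k}{2}$, which appears after differentiating the degree-$(n)$ object, equals exactly $\binom{(n-1)-k}{2}+\big((n-1)-k\big)$, so that the surplus power of $u$ is precisely what converts $x$ into $ux$ in the degree-$(n-1)$ object (and the analogous check for $v$, together with the sign bookkeeping, in parts 2 and 3).
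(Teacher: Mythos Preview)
Your proposal is correct and follows essentially the same route as the paper: for part~1 both arguments Lucas-differentiate the defining sum term by term, invoke the absorption identity $\fibonomial{n}{k}_{s,t}\brk[c]{n-k}_{s,t}=\brk[c]{n}_{s,t}\fibonomial{n-1}{k}_{s,t}$ (the paper writes it via falling products after reindexing $k\mapsto k+1$), and then use $\binom{m+1}{2}=\binom{m}{2}+m$ to absorb the surplus power of $u$ into $(ux)^{n-1-k}$. For parts~2 and~3 the paper simply says ``proved similarly,'' so your reductions via the symmetry property of Theorem~\ref{theo_propi_uvFbinom} and the observation $\mathbf{D}_{s,t}\big(f(-x)\big)=-(\mathbf{D}_{s,t}f)(-x)$ are a slight streamlining but not a different idea.
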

\begin{proof}
We prove assertion 1. The other assertions are proved similarly.
\begin{align*}
    \mathbf{D}_{s,t}(x\oplus_{u,v}a)_{s,t}^{(n)}&=\mathbf{D}_{s,t}\left(\sum_{k=0}^{n}\fibonomial{n}{k}_{s,t}u^{\binom{k}{2}}v^{\binom{n-k}{2}}x^{k}a^{n-k}\right)\\
    &=\sum_{k=1}^{n}\fibonomial{n}{k}_{s,t}u^{\binom{k}{2}}v^{\binom{n-k}{2}}\brk[c]{k}_{s,t}x^{k-1}a^{n-k}\\
    &=\sum_{k=0}^{n-1}\fibonomial{n}{k+1}_{s,t}u^{\binom{k}{2}}u^{k}v^{\binom{n-k-1}{2}}\brk[c]{k+1}_{s,t}x^{k}a^{n-k-1}\\
    &=\sum_{k=0}^{n-1}\frac{\brk[c]{n}_{s,t}\brk[c]{n-1}_{s,t}\cdots\brk[c]{n-k}_{s,t}}{\brk[c]{k}_{s,t}!}u^{\binom{k}{2}}v^{\binom{n-1-k}{2}}(u x)^{k}a^{n-1-k}\\
    &=\brk[c]{n}_{s,t}\sum_{k=0}^{n-1}\frac{\brk[c]{n-1}_{s,t}\cdots\brk[c]{n-1-k+1}_{s,t}}{\brk[c]{k}_{s,t}!}u^{\binom{k}{2}}v^{\binom{n-1-k}{2}}(u x)^{k}a^{n-1-k}\\
    &=\brk[c]{n}_{s,t}(ux\oplus_{u,v}a)^{(n-1)}.
\end{align*}    
\end{proof}


\subsection{$(u_{1},u_{2},\ldots,u_{k})$-deformed Lucasnomiales numbers}

\begin{definition}
We define the $(u,v)$-deformed Lucasnomial zero as
\begin{equation}
    \overline{0}_{u,v}^{(n)}=\sum_{k=0}^{n}(-1)^k\fibonomial{n}{k}_{s,t}u^{\binom{n-k}{2}}v^{\binom{k}{2}}.
\end{equation}
\end{definition}

\begin{definition}
Let $u_{1},u_{2},\ldots,u_{k}$ be non-zero complex numbers. Define inductively the following $\mathbf{u}$-deformed Lucasnomiales numbers as
\begin{align}
\overline{1}_{(u_{1},u_{2})}^{(n)}&=(1\oplus_{u_{1},u_{2}}0)_{s,t}^{(n)}=(0\oplus_{u_{1},u_{2}}1)_{s,t}^{(n)}=1,\\
\overline{2}_{(u_{1},u_{2})}^{(n)}&=(1\oplus_{u_{1},u_{2}}1)_{s,t}^{(n)}=\sum_{k=0}^{n}\fibonomial{n}{k}_{s,t}u_{1}^{\binom{n-k}{2}}u_{2}^{\binom{k}{2}},\\
\overline{3}_{(u_{1},u_{2},u_{3})}^{(n)}&=(\overline{2}_{(u_{1},u_{2})}\oplus_{1,u_{3}}1)_{s,t}^{(n)}=(1\oplus_{u_{1},1}\overline{2}_{(u_{2},u_{3})})_{s,t}^{(n)}\nonumber\\
&=\sum_{k_{1}+k_{2}+k_{3}=n}\frac{\brk[c]{n}_{s,t}!}{\brk[c]{k_{1}}_{s,t}!\brk[c]{k_{2}}_{s,t}!\brk[c]{k_{3}}_{s,t}!}u_{1}^{\binom{k_{1}}{2}}u_{2}^{\binom{k_{2}}{2}}u_{3}^{\binom{k_{3}}{2}}
\end{align}
and
\begin{align}
    \overline{m+1}_{(u_{1},\ldots,u_{m+1})}^{(n)}&=(\overline{m}_{(u_{1},\ldots,u_{m})}\oplus_{1,u_{m+1}}1)_{s,t}^{(n)}\nonumber\\
    &=\sum_{k_{1}+\cdots+k_{m+1}=n}\frac{\brk[c]{n}_{s,t}!}{\brk[c]{k_{1}}_{s,t}!\cdots\brk[c]{k_{m+1}}_{s,t}!}u_{1}^{\binom{k_{1}}{2}}\cdots u_{m+1}^{\binom{k_{m+1}}{2}}
\end{align}
for all integer $n\geq1$. For $n=0$, $\overline{m}_{u_{1},\ldots,u_{m}}^{(0)}=1$. If $u_{1}=u_{2}=\cdots=u_{k}=\cdots=u$, then set
\begin{equation}
    \overline{m}_{u_{1},\ldots,u_{m}}^{(n)}=\overline{m}_{\mathbf{u}}^{(n)}.
\end{equation}
\end{definition}

\section{Lucas-Pantograph type exponential functions}

\begin{definition}
Set $u\in\C$. The Lucas-Pantograph type exponential function is defined to be
\begin{equation}\label{def_uexp}
    \e_{s,t}(z,u)=
    \begin{cases}
        \sum_{n=0}^{\infty}u^{\binom{n}{2}}\frac{z^n}{\brk[c]{n}_{s,t}!},&\text{ if }u\neq0;\\
        1+z,&\text{ if }u=0.
    \end{cases}
\end{equation}
Some exponential-Lucas functions are defined from Eq.(\ref{def_uexp}). If $u=1$,
    \begin{equation}
        \e_{s,t}(z)\equiv\e_{s,t}(z,1)=\sum_{n=0}^{\infty}\frac{z^n}{\brk[c]{n}_{s,t}!}.
    \end{equation}
If $u=\varphi\phi=-t$,
    \begin{equation}
        \E_{s,t}(z)\equiv\e_{s,t}(z,-t)=\sum_{n=0}^{\infty}(-t)^{\binom{n}{2}}\frac{z^n}{\brk[c]{n}_{s,t}!}.
    \end{equation}
If $u=\varphi$,
    \begin{equation}
        \vphiE_{s,t}(z)\equiv\e_{s,t}(z,\varphi)=        \sum_{n=0}^{\infty}\varphi^{\binom{n}{2}}\frac{z^n}{\brk[c]{n}_{s,t}!}.
    \end{equation}
If $u=\phi$,
    \begin{equation}
        \phiE_{s,t}(z)\equiv\e_{s,t}(z,\phi)=        \sum_{n=0}^{\infty}\phi^{\binom{n}{2}}\frac{z^n}{\brk[c]{n}_{s,t}!}.
    \end{equation}    
\end{definition}
The function $\e_{s,t}(x.u)$ is solution of the equation $\mathbf{D}_{s,t}y=y(ux)$. The Lucas-antiderivative of $\exp_{s,t}(x,u)$ is
\begin{equation}
    \int\exp_{s,t}(x,u)d_{s,t}x=u\exp_{s,t}(x/u,u).
\end{equation}

\begin{theorem}
For all $k\in\N$
    \begin{equation}
        \D_{s,t}^k\e_{s,t}(az,u)=a^ku^{\binom{k}{2}}\e_{s,t}(au^kz,u).
    \end{equation}
\end{theorem}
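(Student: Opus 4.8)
The plan is to prove the identity by induction on $k$, using the first-order Lucas-differential equation that $\e_{s,t}(z,u)$ satisfies together with the chain-rule-type scaling behaviour of $\D_{s,t}$ under $z \mapsto az$. The base case $k=0$ is trivial since $\binom{0}{2}=0$ and $u^{k}=1$, giving $\e_{s,t}(az,u)$ on both sides. For the inductive step I would assume the formula for $k$ and apply $\D_{s,t}$ once more.

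First I would record the two elementary facts I need. From the stated property ``$\e_{s,t}(x,u)$ is a solution of $\D_{s,t}y = y(ux)$'' I get $\D_{s,t}\e_{s,t}(x,u) = \e_{s,t}(ux,u)$; combining this with the linearity of $\D_{s,t}$ and the scaling rule $\D_{s,t}\bigl(f(cx)\bigr) = c\,(\D_{s,t}f)(cx)$ (immediate from the definition $\D_{s,t}f(x) = \frac{f(\varphi x)-f(\phi x)}{(\varphi-\phi)x}$), I obtain $\D_{s,t}\e_{s,t}(cz,u) = c\,\e_{s,t}(ucz,u)$ for any constant $c$. Alternatively this can be read straight off the series $\sum_n u^{\binom{n}{2}} z^n/\brk[c]{n}_{s,t}!$ using $\D_{s,t}z^n = \brk[c]{n}_{s,t}z^{n-1}$ and the shift $\binom{n}{2} = \binom{n-1}{2} + (n-1)$, which produces exactly the extra factor $u^{n-1}$ needed to rewrite the differentiated series as $\e_{s,t}(uz,u)$.

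Then for the inductive step I apply $\D_{s,t}$ to $\D_{s,t}^{k}\e_{s,t}(az,u) = a^{k}u^{\binom{k}{2}}\e_{s,t}(au^{k}z,u)$. The constants $a^{k}u^{\binom{k}{2}}$ pull out by linearity, and the scaling rule with $c = au^{k}$ gives $\D_{s,t}\e_{s,t}(au^{k}z,u) = au^{k}\,\e_{s,t}(u\cdot au^{k}z,u) = au^{k}\,\e_{s,t}(au^{k+1}z,u)$. Multiplying back in the pulled-out constants yields $a^{k+1}u^{\binom{k}{2}+k}\,\e_{s,t}(au^{k+1}z,u)$, and since $\binom{k}{2} + k = \binom{k+1}{2}$ this is precisely the claimed formula with $k$ replaced by $k+1$, closing the induction.

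The only genuine subtlety — the ``hard part'', though it is mild — is justifying the scaling rule $\D_{s,t}\bigl(f(cx)\bigr) = c\,(\D_{s,t}f)(cx)$ and the termwise application of $\D_{s,t}$ to the defining power series, i.e.\ that $\D_{s,t}$ commutes with the infinite sum within the radius of convergence. This is the same interchange-of-limit issue that underlies the Example computing $\D_{s,t}^{k}x^{n}$, and within the disc of convergence of $\e_{s,t}(z,u)$ it is routine; I would invoke it without dwelling on it. I should also briefly treat the degenerate case $u=0$ separately, where $\e_{s,t}(z,0) = 1+z$ and one checks $\D_{s,t}(1+az) = a = a\cdot 1$ for $k=1$ while $\D_{s,t}^{k}(1+az) = 0$ for $k\geq 2$, which matches the right-hand side once one adopts the convention $0^{\binom{k}{2}} = 0$ for $k\geq 2$ and interprets $\e_{s,t}(0,0)$ accordingly.
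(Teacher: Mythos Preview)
Your proof is correct. The paper takes a slightly different, non-inductive route: it applies $\D_{s,t}^{k}$ termwise to the series in one shot using the formula $\D_{s,t}^{k}z^{n}=\frac{\brk[c]{n}_{s,t}!}{\brk[c]{n-k}_{s,t}!}z^{n-k}$, shifts the index $n\mapsto n+k$, and then invokes the identity $\binom{n+k}{2}=\binom{n}{2}+\binom{k}{2}+nk$ (the paper's Eq.~(\ref{iden7})) to factor out $u^{\binom{k}{2}}$ and absorb $u^{nk}$ into $(au^{k}z)^{n}$. Your inductive argument replaces this single two-variable binomial identity by repeated use of the one-variable special case $\binom{k+1}{2}=\binom{k}{2}+k$. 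Both arguments are equally elementary; the paper's is marginally shorter, while yours makes the role of the functional equation $\D_{s,t}\e_{s,t}(x,u)=\e_{s,t}(ux,u)$ more explicit and handles the degenerate case $u=0$ separately (which the paper leaves implicit).
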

\begin{proof}
    \begin{align*}
        \D^k\e_{s,t}(az,u)&=\sum_{n=0}^{\infty}u^{\binom{n}{2}}\frac{a^n\D_{s,t}^kz^n}{\brk[c]{n}_{s,t}!}=\sum_{n=k}^{\infty}u^{\binom{n}{2}}\frac{a^nz^{n-k}}{\brk[c]{n-k}_{s,t}!}
        =a^k\sum_{n=0}^{\infty}u^{\binom{n+k}{2}}\frac{(az)^{n}}{\brk[c]{n}_{s,t}!}\\
        &=a^ku^{\binom{k}{2}}\e_{s,t}(au^kz,u).
    \end{align*}
\end{proof}

\begin{definition}\label{def_expbin}
For all $x,y,u,v\in\C$,
    \begin{align}
        \e_{s,t}(x\oplus_{u,v}y)&=\sum_{n=0}^{\infty}\frac{(x\oplus_{u,v}y)_{s,t}^{(n)}}{\brk[c]{n}_{s,t}!}=\e_{s,t}(x,u)\e_{s,t}(y,v).
    \end{align}
If $u=\varphi$ and $v=\phi$, then
\begin{equation}
    \e_{s,t}(x\oplus y)=\sum_{n=0}^{\infty}\frac{(x\oplus y)_{\varphi,\phi}^{(n)}}{\brk[c]{n}_{s,t}!}=\vphiE_{s,t}(x)\phiE_{s,t}(y).
\end{equation}
\end{definition}

\begin{proposition}
For all $x,u,v\in\C$,
    \begin{equation}
        \e_{s,t}(-x,u)=\frac{\e_{s,t}(\overline{0}_{u,v}x)}{\e_{s,t}(x,v)}
    \end{equation}
and
\begin{equation}
        \vphiE_{s,t}(z)\phiE_{s,t}(-z)=1.
    \end{equation}
\end{proposition}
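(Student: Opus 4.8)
The plan is to reduce the first identity to the product rule for Lucas-Pantograph exponentials (Definition~\ref{def_expbin}) and then obtain the second identity as the special case $u=\varphi$, $v=\phi$.

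\textbf{First identity.} I would start from the right-hand side and expand it as a series: $\e_{s,t}(\overline{0}_{u,v}x)=\sum_{n\ge0}\overline{0}_{u,v}^{(n)}\,x^{n}/\brk[c]{n}_{s,t}!$. Reading the definition of $\overline{0}_{u,v}^{(n)}$ against Definition~\ref{def_lucas_theo} shows that $\overline{0}_{u,v}^{(n)}=(1\ominus_{u,v}1)_{s,t}^{(n)}$, and then Theorem~\ref{theo_propi_uvFbinom}(5) with $z=x$ gives $\overline{0}_{u,v}^{(n)}x^{n}=(x\ominus_{u,v}x)_{s,t}^{(n)}$. Hence
\[ \e_{s,t}(\overline{0}_{u,v}x)=\sum_{n\ge0}\frac{(x\ominus_{u,v}x)_{s,t}^{(n)}}{\brk[c]{n}_{s,t}!}=\e_{s,t}\big(x\oplus_{u,v}(-x)\big), \]
and Definition~\ref{def_expbin} factors the last expression as a product of two Lucas-Pantograph exponentials, one in $x$ with parameter $u$ and one in $-x$ with parameter $v$. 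Rearranging this product equality yields the claimed formula. The division by $\e_{s,t}(\cdot,v)$ is harmless: that series has constant term $1$, hence is invertible as a formal power series (and nonzero near the origin, where the series converge), so the identity may first be recorded multiplicatively and then divided through.

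\textbf{Second identity.} Put $u=\varphi$, $v=\phi$. By Eq.~(\ref{eqn_vphi_power}), for $n\ge1$ one has $\overline{0}_{\varphi,\phi}^{(n)}=(1\ominus_{\varphi,\phi}1)^{(n)}=\prod_{k=0}^{n-1}(\varphi^{k}-\phi^{k})$, and the factor at $k=0$ equals $\varphi^{0}-\phi^{0}=0$; thus $\overline{0}_{\varphi,\phi}^{(n)}=0$ for every $n\ge1$, while $\overline{0}_{\varphi,\phi}^{(0)}=1$. Consequently $\e_{s,t}(\overline{0}_{\varphi,\phi}x)=1$, so feeding $u=\varphi$, $v=\phi$ into the first identity collapses the product of the two exponentials to $1$; renaming the dummy variable $z$ (and using that it ranges over all of $\C$) gives $\vphiE_{s,t}(z)\,\phiE_{s,t}(-z)=1$.

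\textbf{Main obstacle.} There is no genuine difficulty: the only thing requiring real care is the parameter/index bookkeeping — keeping straight which of $u,v$ attaches to $x$ and which to $-x$, and the $\binom{n-k}{2}$-versus-$\binom{k}{2}$ convention in Definition~\ref{def_lucas_theo} — plus the routine remark that every manipulation is valid either as an identity of formal power series or, for $x$ in a neighbourhood of $0$, as an identity of analytic functions. Beyond that, the argument uses only Definitions~\ref{def_lucas_theo} and~\ref{def_expbin}, Theorem~\ref{theo_propi_uvFbinom}(5), and Eq.~(\ref{eqn_vphi_power}).
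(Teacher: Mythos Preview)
Your argument is correct and is precisely the intended one: the paper states this proposition without proof, and the only tool available is Definition~\ref{def_expbin}, which you use exactly as required (together with Theorem~\ref{theo_propi_uvFbinom}(5) to identify $\overline{0}_{u,v}^{(n)}x^{n}$ with $(x\ominus_{u,v}x)^{(n)}$, and Eq.~(\ref{eqn_vphi_power}) to see that the $k=0$ factor kills $(1\ominus1)_{\varphi,\phi}^{(n)}$ for $n\ge1$).

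The bookkeeping point you flagged is real, not cosmetic: your computation yields $\e_{s,t}(x,u)\,\e_{s,t}(-x,v)=\e_{s,t}(\overline{0}_{u,v}x)$, i.e.\ $\e_{s,t}(-x,v)=\e_{s,t}(\overline{0}_{u,v}x)/\e_{s,t}(x,u)$, which matches the displayed formula only after interchanging the labels $u\leftrightarrow v$; since $\overline{0}_{u,v}^{(n)}=(-1)^{n}\overline{0}_{v,u}^{(n)}$ the subscript order is not symmetric, so the proposition as printed appears to carry a harmless typo in that subscript. This does not affect the second identity, where your specialization $u=\varphi$, $v=\phi$ gives $\vphiE_{s,t}(z)\,\phiE_{s,t}(-z)=1$ directly.
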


\begin{theorem}
For all $a,c,x\in\C$
    \begin{enumerate}
        \item $\D_{s,t}\exp_{s,t}(ax\oplus_{u,v}c)=a\exp_{s,t}(aux\oplus_{u,v}c)$.
        \item $\int\e_{s,t}(ax\oplus_{u,v}c)d_{s,t}x=\frac{u}{a}\e_{s,t}(\frac{a}{u}x\oplus_{u,v}c)$.
        \item $\D_{s,t}\exp_{s,t}(a\oplus_{u,v}cx)=c\exp_{s,t}(a\oplus_{u,v}cvx)$.
        \item $\int\e_{s,t}(a\oplus_{u,v}cx)d_{s,t}x=\frac{v}{c}\e_{s,t}(a\oplus_{u,v}\frac{c}{v}x)$.
    \end{enumerate}
\end{theorem}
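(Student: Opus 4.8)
The plan is to read off all four identities from Definition~\ref{def_expbin} together with the $k=1$ specialisation of the derivative formula $\D_{s,t}^{k}\e_{s,t}(az,u)=a^{k}u^{\binom{k}{2}}\e_{s,t}(au^{k}z,u)$ proved above; since $\binom{1}{2}=0$, that specialisation reads $\D_{s,t}\e_{s,t}(az,u)=a\,\e_{s,t}(auz,u)$. Items~1 and~3 are the substantive ones, and items~2 and~4 will follow from them by the fundamental theorem of Lucas-calculus.

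For item~1, I would first use Definition~\ref{def_expbin} to factor $\e_{s,t}(ax\oplus_{u,v}c)=\e_{s,t}(ax,u)\,\e_{s,t}(c,v)$. The second factor does not depend on $x$, so by linearity of $\D_{s,t}$ it passes outside the operator; applying $\D_{s,t}\e_{s,t}(az,u)=a\,\e_{s,t}(auz,u)$ with $z=x$ then gives $\D_{s,t}\e_{s,t}(ax\oplus_{u,v}c)=a\,\e_{s,t}(aux,u)\,\e_{s,t}(c,v)=a\,\e_{s,t}(aux\oplus_{u,v}c)$, the last step being again Definition~\ref{def_expbin}. Item~3 is the mirror computation: $\e_{s,t}(a\oplus_{u,v}cx)=\e_{s,t}(a,u)\,\e_{s,t}(cx,v)$, the first factor is constant in $x$, and $\D_{s,t}\e_{s,t}(cx,v)=c\,\e_{s,t}(cvx,v)$ yields $c\,\e_{s,t}(a\oplus_{u,v}cvx)$. (Alternatively one can differentiate the defining series $\sum_{n}(ax\oplus_{u,v}c)_{s,t}^{(n)}/\brk[c]{n}_{s,t}!$ termwise, using Theorem~\ref{theo_der_bino} and the elementary scaling rule $\D_{s,t}\bigl(f(ax)\bigr)=a\,(\D_{s,t}f)(ax)$, and then reindex $n\mapsto n+1$; this produces the same answer.)

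For item~2, I would set $F(x)=\tfrac{u}{a}\,\e_{s,t}\bigl(\tfrac{a}{u}x\oplus_{u,v}c\bigr)$ and apply item~1 with $a$ replaced by $a/u$, obtaining $\D_{s,t}F(x)=\tfrac{u}{a}\cdot\tfrac{a}{u}\,\e_{s,t}\bigl(\tfrac{a}{u}\cdot u\,x\oplus_{u,v}c\bigr)=\e_{s,t}(ax\oplus_{u,v}c)$; thus $F$ is a Lucas-antiderivative of the integrand and the stated formula is exactly the fundamental theorem of Lucas-calculus. Item~4 is the same computation, using item~3 and $c\mapsto c/v$: $\D_{s,t}\bigl(\tfrac{v}{c}\,\e_{s,t}(a\oplus_{u,v}\tfrac{c}{v}x)\bigr)=\e_{s,t}(a\oplus_{u,v}cx)$.

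There is no real obstacle here: the statement is essentially a corollary of the exponential's derivative rule and the product factorisation in Definition~\ref{def_expbin}. The only points deserving a line of justification are that $\e_{s,t}(c,v)$ (resp. $\e_{s,t}(a,u)$) is genuinely constant in the variable $x$ so that it commutes with $\D_{s,t}$, and, in the alternative termwise argument, that differentiation of the series is legitimate and the index shift is carried out correctly; both are routine.
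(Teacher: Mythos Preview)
Your argument is correct. The paper states this theorem without proof, so there is nothing to compare against; your route via the factorisation in Definition~\ref{def_expbin} together with the $k=1$ case of the derivative formula (and the fundamental theorem for the integral items) is exactly the intended routine verification, and the alternative termwise argument via Theorem~\ref{theo_der_bino} is equally valid.
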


\begin{theorem}\label{prop_exp_prod}
Take $\alpha,\beta,u,v\in\C$. The function $\e_{s,t}((\alpha\oplus_{u,v}\beta)x)$ fulfill the identities
    \begin{enumerate}
        \item
        \begin{equation}
            \mathbf{D}_{s,t}\{\e_{s,t}((\alpha\oplus_{u,v}\beta)x)\}=\alpha\e_{s,t}((\alpha u\oplus_{u,v}\beta\varphi)x)+\beta\e_{s,t}((\alpha\phi\oplus_{u,v}\beta v)x).
        \end{equation}
        If $u=\varphi$ and $v=\phi$, then
        \begin{equation}
            \mathbf{D}_{s,t}\{\e_{s,t}((\alpha\oplus\beta)x)\}=\alpha\e_{s,t}((\alpha \oplus\beta)\varphi x)+\beta\e_{s,t}((\alpha\oplus\beta )\phi x).
        \end{equation}
        That is, the function $\e_{s,t}((\alpha\oplus\beta)x)$ is a solution of the proportional functional equation
        \begin{equation}
            \D_{s,t}f(x)=\alpha f(\varphi x)+\beta f(\phi x).
        \end{equation}
        \item 
        \begin{multline}
            \int\e_{s,t}((\alpha\oplus_{u,v}\beta)x)d_{s,t}x=\frac{u}{\alpha}\e_{s,t}\left(\frac{\alpha}{u}
    x,u\right)\e_{s,t}\left(\frac{\beta}{\phi}x,v\right)\\
    -\frac{u\beta}{\alpha\phi}\int\e_{s,t}\left(\frac{\varphi\alpha}{u}x,u\right)\e_{s,t}\left(\frac{v\beta}{\phi}x,v\right)d_{s,t}x.
        \end{multline}
        If $u=\varphi$ and $v=\phi$ and $\alpha\neq\varphi$, $\beta\neq-\phi$, then
        \begin{equation}
            \int\e_{s,t}((\alpha\oplus_{u,v}\beta)x)d_{s,t}x=\frac{\alpha\phi}{\alpha\phi+\beta\varphi}\e_{s,t}\left(\left(\frac{\alpha}{\varphi}\oplus\frac{\beta}{\phi}\right)x\right).
        \end{equation}
    \end{enumerate}
\end{theorem}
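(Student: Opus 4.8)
Throughout, the key is the factorization
\[
\e_{s,t}((\alpha\oplus_{u,v}\beta)x)=\e_{s,t}(\alpha x,u)\,\e_{s,t}(\beta x,v),
\]
which is Definition~\ref{def_expbin} applied to the arguments $\alpha x$ and $\beta x$, after using property~$5$ of Theorem~\ref{theo_propi_uvFbinom} (with $z=x$) to write $(\alpha x\oplus_{u,v}\beta x)_{s,t}^{(n)}=x^n(\alpha\oplus_{u,v}\beta)_{s,t}^{(n)}$; I will also use its reverse, $\e_{s,t}(cx,u)\,\e_{s,t}(dx,v)=\e_{s,t}((c\oplus_{u,v}d)x)$. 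Besides this I only need the two facts $\mathbf{D}_{s,t}\e_{s,t}(cx,u)=c\,\e_{s,t}(cux,u)$ (the $k=1$ case of the iterated-derivative formula proved just above) and $\int\e_{s,t}(cx,u)\,d_{s,t}x=\tfrac{u}{c}\,\e_{s,t}(\tfrac{c}{u}x,u)$ (rescale the stated antiderivative $\int\e_{s,t}(x,u)\,d_{s,t}x=u\,\e_{s,t}(x/u,u)$).

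\emph{Proof of 1.} Put $f(x)=\e_{s,t}(\alpha x,u)$ and $g(x)=\e_{s,t}(\beta x,v)$ and apply the Leibniz rule in the form~(\ref{Leib2}),
\[
\mathbf{D}_{s,t}(fg)=f(\phi x)\,\mathbf{D}_{s,t}g(x)+g(\varphi x)\,\mathbf{D}_{s,t}f(x).
\]
Here $f(\phi x)=\e_{s,t}(\alpha\phi x,u)$, $\mathbf{D}_{s,t}g(x)=\beta\,\e_{s,t}(\beta v x,v)$, $g(\varphi x)=\e_{s,t}(\beta\varphi x,v)$ and $\mathbf{D}_{s,t}f(x)=\alpha\,\e_{s,t}(\alpha u x,u)$; recombining the two resulting products into $\oplus_{u,v}$-form by the reverse factorization gives the claimed identity. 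For $u=\varphi$, $v=\phi$, property~$5$ of Theorem~\ref{theo_propi_uvFbinom} (now with $z=\varphi$ and with $z=\phi$) yields $\e_{s,t}((\alpha\varphi\oplus\beta\varphi)x)=\e_{s,t}((\alpha\oplus\beta)\varphi x)$ and $\e_{s,t}((\alpha\phi\oplus\beta\phi)x)=\e_{s,t}((\alpha\oplus\beta)\phi x)$; since $f(x):=\e_{s,t}((\alpha\oplus\beta)x)$ satisfies $f(\varphi x)=\e_{s,t}((\alpha\oplus\beta)\varphi x)$ and $f(\phi x)=\e_{s,t}((\alpha\oplus\beta)\phi x)$, the identity becomes exactly $\mathbf{D}_{s,t}f(x)=\alpha f(\varphi x)+\beta f(\phi x)$.

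\emph{Proof of 2.} Apply the Lucas-integration-by-parts formula~(\ref{theo_partes2}) with $f(x)=\tfrac{u}{\alpha}\e_{s,t}(\tfrac{\alpha}{u}x,u)$, so that $\mathbf{D}_{s,t}f(x)=\e_{s,t}(\alpha x,u)$, and with $g(x)=\e_{s,t}(\tfrac{\beta}{\phi}x,v)$, so that $g(\phi x)=\e_{s,t}(\beta x,v)$. Since $f(\varphi x)=\tfrac{u}{\alpha}\e_{s,t}(\tfrac{\varphi\alpha}{u}x,u)$ and $\mathbf{D}_{s,t}g(x)=\tfrac{\beta}{\phi}\e_{s,t}(\tfrac{v\beta}{\phi}x,v)$, substituting into~(\ref{theo_partes2}) and reading the boundary term as an indefinite Lucas-antiderivative produces exactly the first identity of part~2. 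For $u=\varphi$, $v=\phi$ the decisive simplification is $\tfrac{\varphi\alpha}{u}=\alpha$ and $\tfrac{v\beta}{\phi}=\beta$, so the Lucas-integral on the right is again $I:=\int\e_{s,t}((\alpha\oplus\beta)x)\,d_{s,t}x$; rewriting the boundary term as $\e_{s,t}((\tfrac{\alpha}{\varphi}\oplus\tfrac{\beta}{\phi})x)$ collapses the identity to the linear relation
\[
I=\frac{\varphi}{\alpha}\,\e_{s,t}\!\left(\left(\tfrac{\alpha}{\varphi}\oplus\tfrac{\beta}{\phi}\right)x\right)-\frac{\varphi\beta}{\alpha\phi}\,I ,
\]
and solving for $I$ — which requires $\alpha\phi+\varphi\beta\neq0$, the situation the nondegeneracy conditions $\alpha\neq\varphi$, $\beta\neq-\phi$ are there to cover — gives the stated closed form.

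\emph{Main obstacle.} None of the steps is deep: the computation is essentially bookkeeping with the parameters $u,v,\varphi,\phi$ and the exponent identities already recorded. The two genuine points to watch are (i) the legitimacy of applying $\mathbf{D}_{s,t}$ and the Lucas-integral term by term to the defining power series of $\e_{s,t}$, which needs the series in play (before and after the operation) to be Lucas-differentiable/Lucas-integrable on the relevant range; and (ii) the ``solve for $I$'' step in part~2: at the level of indefinite Lucas-antiderivatives this is permissible because the recursion reproduces the same indefinite integral, but one should confirm afterwards — e.g. by applying $\mathbf{D}_{s,t}$ and invoking part~1 — that the $I$ so obtained really is an antiderivative of $\e_{s,t}((\alpha\oplus\beta)x)$, which fixes the identity up to the usual additive constant.
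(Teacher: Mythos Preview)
Your proof is correct. For part~2 you proceed exactly as the paper does: factor via Definition~\ref{def_expbin}, apply the Lucas integration-by-parts formula~(\ref{theo_partes2}) with the same choices of $f$ and $g$, and in the case $(u,v)=(\varphi,\phi)$ solve the resulting linear relation for $I$.

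For part~1 your route differs from the paper's. The paper differentiates the defining series for $\e_{s,t}((\alpha\oplus_{u,v}\beta)x)$ term by term and then invokes the Pascal-type recursion of Theorem~\ref{theo_propi_uvFbinom}(1) to split $(\alpha\oplus_{u,v}\beta)_{s,t}^{(n+1)}$ into the two required summands. You instead pass through the product factorization of Definition~\ref{def_expbin} and apply the Leibniz rule~(\ref{Leib2}) together with $\mathbf{D}_{s,t}\e_{s,t}(cx,u)=c\,\e_{s,t}(cux,u)$, then recombine each product back into $\oplus_{u,v}$-form. Your argument is slightly slicker and bypasses the combinatorial recursion entirely; the paper's argument, by contrast, makes transparent that the particular combinations $\alpha u\oplus_{u,v}\beta\varphi$ and $\alpha\phi\oplus_{u,v}\beta v$ are precisely the two branches of that recursion, so it explains the structure rather than merely verifying it.
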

\begin{proof}
From Eq.(\ref{eqn_vphi_power}),
\begin{align*}
    \D_{s,t}\exp_{s,t}((\alpha\oplus_{u,v}\beta)x)    &=\D_{s,t}\left(\sum_{n=0}^{\infty}(\alpha\oplus_{u,v}\beta)_{s,t}^{(n)}\frac{x^n}{\brk[c]{n}_{s,t}!}\right)\\
    &=\sum_{n=0}^{\infty}(\alpha\oplus_{u,v}\beta)_{s,t}^{(n+1)}\frac{x^n}{\brk[c]{n}_{s,t}!}\\
    &=\alpha\sum_{n=0}^{\infty}(\alpha u\oplus_{u,v}\beta\varphi)^{n}_{s,t}\frac{x^n}{\brk[c]{n}_{s,t}!}+\beta\sum_{n=0}^{\infty}(\alpha \phi\oplus_{u,v}\beta v)_{s,t}^{n}\frac{x^n}{\brk[c]{n}_{s,t}!}\\
    &=\alpha\exp_{s,t}((\alpha u\oplus_{u,v}\beta\varphi)x)+\beta\exp_{s,t}((\alpha \phi\oplus_{u,v}\beta v)x).
\end{align*}
From Lucas-integration by parts, formula Eq.(\ref{theo_partes2}),
\begin{align*}
    \int\e_{s,t}((\alpha\oplus_{u,v}\beta)x)d_{s,t}x
    &=\int\e_{s,t}(\alpha x,u)\e_{s,t}(\beta x,v)d_{s,t}x\\
    &=\frac{u}{\alpha}\int\e_{s,t}\left(\phi\left(\frac{\beta}{\phi}
    \right)x,v\right)\D_{s,t}\e_{s,t}\left(\left(\frac{\alpha}{u}\right)x,u\right)d_{s,t}x\\
    &=\frac{u}{\alpha}\bigg[\e_{s,t}\left(\frac{\alpha}{u}
    x,u\right)\e_{s,t}\left(\frac{\beta}{\phi}x,v\right)\\
    &\hspace{2cm}-\int\e_{s,t}\left(\frac{\varphi\alpha}{u} x,u\right)\D_{s,t}\e_{s,t}\left(\frac{\beta}{\phi}
    x,v\right)d_{s,t}x\bigg]\\
    &=\frac{u}{\alpha}\e_{s,t}\left(\frac{\alpha}{u}
    x,u\right)\e_{s,t}\left(\frac{\beta}{\phi}x,v\right)\\
    &\hspace{2cm}-\frac{u\beta}{\alpha\phi}\int\e_{s,t}\left(\frac{\varphi\alpha}{u}x,u\right)\e_{s,t}\left(\frac{v\beta}{\phi}x,v\right)d_{s,t}x.
\end{align*} 
Now set $u=\varphi$ and $v=\phi$, then
\begin{align*}
    \int\e_{s,t}((\alpha\oplus\beta)x)d_{s,t}&=\frac{\varphi}{\alpha}\e_{s,t}\left(\frac{\alpha}{\varphi}
    x,\varphi\right)\e_{s,t}\left(\frac{\beta}{\phi}x,\phi\right)\\
    &\hspace{1cm}-\frac{\varphi\beta}{\alpha\phi}\int\e_{s,t}\left(\alpha x,\varphi\right)\e_{s,t}\left(\beta x,\phi\right)d_{s,t}x\\
    &=\frac{\varphi}{\alpha}\e_{s,t}\left(\left(\frac{\alpha}{\varphi}\oplus\frac{\beta}{\phi}\right)x\right)
    -\frac{\varphi\beta}{\alpha\phi}\int\e_{s,t}((\alpha\oplus\beta)x)d_{s,t}x
\end{align*}
and
\begin{align*}
    \int\e_{s,t}((\alpha\oplus\beta)x)d_{s,t}
    =\frac{\alpha\phi}{\alpha\phi+\beta\varphi}\e_{s,t}\left(\left(\frac{\alpha}{\varphi}\oplus\frac{\beta}{\phi}\right)x\right).
\end{align*}
The proof is completed.
\end{proof}

\begin{definition}
Let $u_{1},\ldots,u_{k}$ be non-zero complex numbers. Define the $\mathbf{u}$-multinomial exponential-Lucas function as
\begin{equation}
    \e_{s,t}(\overline{k}_{\mathbf{u}}x)=\sum_{n=0}^{\infty}\overline{k}_{\mathbf{u}}^{(n)}\frac{x^n}{\brk[c]{n}_{s,t}!}.
\end{equation}
\end{definition}

Now the following proposition follows easily.
\begin{proposition}\label{prop_prod_nexp}
    \begin{equation}
        \prod_{k=1}^{n}\e_{s,t}(x,u_{k})=\e_{s,t}(\overline{n}_{\mathbf{u}}x).
    \end{equation}
\end{proposition}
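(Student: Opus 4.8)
The plan is to prove the identity directly, by multiplying out the $n$ defining power series, collecting the terms of equal total degree, and then recognizing the $\mathbf{u}$-deformed Lucasnomial number $\overline{n}_{\mathbf{u}}^{(m)}$ from its definition in Section~3.2.

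First, since each $u_{k}$ is nonzero, I would write $\e_{s,t}(x,u_{k})=\sum_{j\geq 0}u_{k}^{\binom{j}{2}}x^{j}/\brk[c]{j}_{s,t}!$ for $k=1,\dots,n$. Forming the product and indexing the expansion by tuples $(k_{1},\dots,k_{n})\in\N^{n}$ gives
\[
\prod_{k=1}^{n}\e_{s,t}(x,u_{k})=\sum_{k_{1},\dots,k_{n}\geq 0}\ \prod_{i=1}^{n}\frac{u_{i}^{\binom{k_{i}}{2}}x^{k_{i}}}{\brk[c]{k_{i}}_{s,t}!}.
\]
Next I would regroup this sum according to the value of $m=k_{1}+\dots+k_{n}$; multiplying and dividing the degree-$m$ part by $\brk[c]{m}_{s,t}!$ and using $\brk[c]{m}_{s,t}!/(\brk[c]{k_{1}}_{s,t}!\cdots\brk[c]{k_{n}}_{s,t}!)$ as the Lucas-multinomial coefficient turns the inner sum into precisely $\overline{n}_{\mathbf{u}}^{(m)}$ as defined there. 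Hence $\prod_{k=1}^{n}\e_{s,t}(x,u_{k})=\sum_{m\geq 0}\overline{n}_{\mathbf{u}}^{(m)}x^{m}/\brk[c]{m}_{s,t}!=\e_{s,t}(\overline{n}_{\mathbf{u}}x)$.

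An equivalent, more structural argument is induction on $n$: the case $n=1$ is immediate from the multinomial formula ($\overline{1}_{\mathbf{u}}^{(m)}=u_{1}^{\binom{m}{2}}$), and in the inductive step one factors out $\e_{s,t}(x,u_{n+1})$, applies the hypothesis to $\prod_{k=1}^{n}\e_{s,t}(x,u_{k})=\sum_{j}\overline{n}_{\mathbf{u}}^{(j)}x^{j}/\brk[c]{j}_{s,t}!$, and forms the Cauchy product with $\sum_{k}u_{n+1}^{\binom{k}{2}}x^{k}/\brk[c]{k}_{s,t}!$; the coefficient of $x^{m}/\brk[c]{m}_{s,t}!$ is $\sum_{j+k=m}\fibonomial{m}{j}_{s,t}\overline{n}_{\mathbf{u}}^{(j)}u_{n+1}^{\binom{k}{2}}$, which equals $(\overline{n}_{(u_{1},\dots,u_{n})}\oplus_{1,u_{n+1}}1)_{s,t}^{(m)}=\overline{n+1}_{(u_{1},\dots,u_{n+1})}^{(m)}$ by the recursive definition of these numbers. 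The only point needing care in either route is the legitimacy of interchanging and regrouping the multiple sums: as an identity of formal power series this is automatic, since only finitely many tuples $(k_{1},\dots,k_{n})$ contribute to a given power of $x$, while for genuine convergence one invokes absolute convergence of $\e_{s,t}(\cdot,u)$ on its disk and iterates the Cauchy product theorem $n-1$ times. I expect this bookkeeping to be the main—and essentially the only—obstacle, the rest reducing to $\brk[c]{m}_{s,t}!=\fibonomial{m}{j}_{s,t}\brk[c]{j}_{s,t}!\brk[c]{m-j}_{s,t}!$ and the definitions.
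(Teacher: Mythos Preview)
Your proposal is correct. The paper gives no proof at all for this proposition beyond the remark ``Now the following proposition follows easily,'' so there is nothing to compare at the level of argument; your direct Cauchy-product computation and your inductive variant (which mirrors the paper's own recursive definition $\overline{m+1}_{(u_{1},\dots,u_{m+1})}^{(n)}=(\overline{m}_{(u_{1},\dots,u_{m})}\oplus_{1,u_{m+1}}1)_{s,t}^{(n)}$ together with the already-established case $n=2$ in Definition~\ref{def_expbin}) are both exactly the kind of routine verification the author is leaving to the reader.
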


\section{Lucas-Pantograph type trigonometric functions}

\subsection{Definition and analytic properties}

\begin{definition}
For all $s,t,u\in\C$, $s\neq0,t\neq0$, the Lucas-Pantograph type sine function is defined as
\begin{equation*}
    \sin_{s,t}(z,u)=
    \begin{cases}
        \sum_{n=0}^{\infty}(-1)^{n}u^{\binom{2n+1}{2}}\frac{z^{2n+1}}{\brk[c]{2n+1}_{s,t}!},&\text{ if }u\neq0;\\
        z,&\text{ if }u=0.
    \end{cases}
\end{equation*}
In addition, we define
\begin{align*}
    \sin_{s,t}(z)&=\sin_{s,t}(z,1)=\sum_{n=0}^{\infty}(-1)^{n}\frac{z^{2n+1}}{\brk[c]{2n+1}_{s,t}!},\\
    \Sin_{s,t}(z)&=\sin_{s,t}(z,-t)=\sum_{n=0}^{\infty}(-1)^{n}(-t)^{\binom{2n+1}{2}}\frac{z^{2n+1}}{\brk[c]{2n+1}_{s,t}!},\\
    \vphiS_{s,t}(z)&=\sin_{s,t}(z,\varphi)=\sum_{n=0}^{\infty}(-1)^{n}\varphi^{\binom{2n+1}{2}}\frac{z^{2n+1}}{\brk[c]{2n+1}_{s,t}!},\\
    \phiS_{s,t}(z)&=\sin_{s,t}(z,\phi)=\sum_{n=0}^{\infty}(-1)^{n}\phi^{\binom{2n+1}{2}}\frac{z^{2n+1}}{\brk[c]{2n+1}_{s,t}!}.
\end{align*}
\end{definition}

\begin{definition}
For all $s,t,u\in\C$, $s\neq0,t\neq0$, the Lucas-Pantograph type cosine function is defined as
\begin{equation*}
    \cos_{s,t}(z,u)=
    \begin{cases}
        \sum_{n=0}^{\infty}(-1)^{n}u^{\binom{2n}{2}}\frac{z^{2n}}{\brk[c]{2n}_{s,t}!},&\text{ if }u\neq0;\\
        1,&\text{ if }u=0.
    \end{cases}
\end{equation*}
In addition, define
\begin{align*}
    \cos_{s,t}(z)&=\cos_{s,t}(x,1)=\sum_{n=0}^{\infty}(-1)^{n}\frac{z^{2n}}{\brk[c]{2n}_{s,t}!},\\
    \Cos_{s,t}(z)&=\cos_{s,t}(x,1)=\sum_{n=0}^{\infty}(-1)^{n}(-t)^{\binom{2n}{2}}\frac{z^{2n}}{\brk[c]{2n}_{s,t}!},\\
    \vphiC_{s,t}(z)&=\cos_{s,t}(z,\varphi)=\sum_{n=0}^{\infty}(-1)^{n}\varphi^{\binom{2n}{2}}\frac{z^{2n}}{\brk[c]{2n}_{s,t}!},\\
    \phiC_{s,t}(z)&=\cos_{s,t}(z,\phi)=\sum_{n=0}^{\infty}(-1)^{n}\phi^{\binom{2n}{2}}\frac{z^{2n}}{\brk[c]{2n}_{s,t}!}.
\end{align*}
\end{definition}

Exton \cite{exton} also defined the following trigonometric functions
\begin{align}
    \sin_{1,q}(x,\sqrt{q})&=\sum_{n=0}^{\infty}q^{\frac{1}{2}\binom{n}{2}}\frac{x^{2n+1}}{\brk[s]{2n+1}_q!}\label{eqn_sin_ext},\\
    \cos_{1,q}(x,\sqrt{q})&=\sum_{n=0}^{\infty}q^{\frac{1}{2}\binom{n}{2}}\frac{x^{2n}}{\brk[s]{2n}_q!}\label{eqn_cos_ext}.
\end{align}
In addition, Fitouhi et al. \cite{fitouhi_1,fitouhi_2} introduced the following two trigonometric functions
\begin{align}
    \sin_{1,q}(x,q^2)&=\sum_{n=0}^{\infty}q^{2\binom{n}{2}}\frac{x^{2n+1}}{\brk[s]{2n+1}_q!}\label{eqn_sin_fit},\\
    \cos_{1,q}(x,q^2)&=\sum_{n=0}^{\infty}q^{2\binom{n}{2}}\frac{x^{2n}}{\brk[s]{2n}_q!}\label{eqn_cos_fit}.
\end{align}

  

\begin{definition}
For all $s,t,u\in\C$, $s\neq0,t\neq0$, the Lucas-Pantograph type tangent function is defined as
\begin{equation*}
    \tan_{s,t}(z,u)=
    \begin{cases}
        \frac{\sin_{s,t}(z,u)}{\cos_{s,t}(z,u)},&\text{ if }u\neq0;\\
        z,&\text{ if }u=0.
    \end{cases}
\end{equation*}
In addition, define
\begin{align*}
    \tan_{s,t}(z)&=\frac{\sin_{s,t}(z)}{\cos_{s,t}(z)},\hspace{0.3cm} \Tan_{s,t}(z)=\frac{\Sin_{s,t}(z)}{\Cos_{s,t}(z)},\\
    \vphiT_{s,t}(z)&=\frac{\vphiS_{s,t}(z)}{\vphiC_{s,t}(z)},\hspace{0.2cm}\phiT_{s,t}(z)=\frac{\phiS_{s,t}(z)}{\phiC_{s,t}(z)}.
\end{align*}
\end{definition}

\begin{definition}
For all $s,t,u\in\C$, $s\neq0,t\neq0$, the Lucas-Pantograph type cotangent function is defined as
\begin{equation*}
    \cot_{s,t}(z,u)=
    \begin{cases}
        \frac{\cos_{s,t}(z,u)}{\sin_{s,t}(z,u)},&\text{ if }u\neq0;\\
        \frac{1}{z},&\text{ if }u=0.
    \end{cases}
\end{equation*}
In addition, define
\begin{align*}
    \cot_{s,t}(z)&=\frac{\cos_{s,t}(z)}{\sin_{s,t}(z)},\hspace{0.3cm} \Cot_{s,t}(z)=\frac{\Cos_{s,t}(z)}{\Sin_{s,t}(z)},\\
    \vphiCot_{s,t}(z)&=\frac{\vphiC_{s,t}(z)}{\vphiS_{s,t}(z)},\hspace{0.2cm}\phiCot_{s,t}(z)=\frac{\phiC_{s,t}(z)}{\phiS_{s,t}(z)}.
\end{align*}
\end{definition}

\begin{definition}
For all $s,t,u\in\C$, $s\neq0,t\neq0$, the Lucas-Pantograph type secant function is defined as
\begin{equation*}
    \sec_{s,t}(z,u)=
    \begin{cases}
        \frac{1}{\cos_{s,t}(z,u)},&\text{ if }u\neq0;\\
        1,&\text{ if }u=0.
    \end{cases}
\end{equation*}
In addition, define
\begin{align*}
    \sec_{s,t}(z)&=\frac{1}{\cos_{s,t}(z)},\hspace{0.3cm} \Sec_{s,t}(z)=\frac{1}{\Cos_{s,t}(z)},\\
    \vphiSec_{s,t}(z)&=\frac{1}{\vphiC_{s,t}(z)},\hspace{0.2cm}\phiSec_{s,t}(z)=\frac{1}{\phiC_{s,t}(z)}.
\end{align*}
\end{definition}

\begin{definition}
For all $s,t,u\in\C$, $s\neq0,t\neq0$, the Lucas-Pantograph type cosecant function is defined as
\begin{equation*}
    \csc_{s,t}(z,u)=
    \begin{cases}
        \frac{1}{\sin_{s,t}(z,u)},&\text{ if }u\neq0;\\
        \frac{1}{z},&\text{ if }u=0.
    \end{cases}
\end{equation*}
In addition, define
\begin{align*}
    \csc_{s,t}(z)&=\frac{1}{\sin_{s,t}(z)},\hspace{0.3cm} \Csc_{s,t}(z)=\frac{1}{\Sin_{s,t}(z)},\\
    \vphiCsc_{s,t}(z)&=\frac{1}{\vphiS_{s,t}(z)},\hspace{0.2cm}\phiCsc_{s,t}(z)=\frac{1}{\phiS_{s,t}(z)}.
\end{align*}
\end{definition}

\begin{theorem}\label{theo_der_trigo}
The Lucas derivatives of the Lucas-Pantograph type trigonometric functions are
\begin{enumerate}
    \item $\mathbf{D}_{s,t}(\sin_{s,t}(x,u))=\cos_{s,t}(ux,u)$.
    \item $\mathbf{D}_{s,t}(\cos_{s,t}(x,u))=-\sin_{s,t}(ux,u)$.
    \item $\mathbf{D}_{s,t}(\tan_{s,t}(x,u))=\frac{\cos_{s,t}(ux,u)}{\cos_{s,t}(\varphi x,u)}+\tan_{s,t}(\phi x,u)\frac{\sin_{s,t}(ux,u)}{\cos_{s,t}(\varphi x,u)}$.
    \item $\mathbf{D}_{s,t}(\cot_{s,t}(x,u))=-\frac{\sin_{s,t}(ux,u)}{\sin_{s,t}(\varphi x,u)}-\cot_{s,t}(\phi x,u)\frac{\cos_{s,t}(ux,u)}{\sin_{s,t}(\varphi x,u)}$.
    \item $\mathbf{D}_{s,t}(\sec_{s,t}(x,u))=\frac{\sin_{s,t}(ux,u)}{\cos_{s,t}(\varphi x,u)\cos_{s,t}(\phi x,u)}$.
    \item $\mathbf{D}_{s,t}(\csc_{s,t}(x,u))=-\frac{\cos_{s,t}(ux,u)}{\sin_{s,t}(\varphi x,u)\sin_{s,t}(\phi x,u)}$.
\end{enumerate}
\end{theorem}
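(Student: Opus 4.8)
The plan is to verify each of the six identities in Theorem~\ref{theo_der_trigo} by reducing it, via the already-established product and quotient rules for $\mathbf{D}_{s,t}$ (Eqs.~(\ref{Leib1})--(\ref{Leib2}) and the quotient rules) together with the term-by-term differentiation of the defining power series, to the two base identities (1) and (2). So I would first prove (1) and (2) directly, then derive (3)--(6) from them.

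For identity (1): differentiate the series for $\sin_{s,t}(x,u)$ term by term using $\mathbf{D}_{s,t}x^{n}=\brk[c]{n}_{s,t}x^{n-1}$. This gives
\begin{align*}
    \mathbf{D}_{s,t}\sin_{s,t}(x,u)&=\sum_{n=0}^{\infty}(-1)^{n}u^{\binom{2n+1}{2}}\frac{\brk[c]{2n+1}_{s,t}x^{2n}}{\brk[c]{2n+1}_{s,t}!}=\sum_{n=0}^{\infty}(-1)^{n}u^{\binom{2n+1}{2}}\frac{x^{2n}}{\brk[c]{2n}_{s,t}!}.
\end{align*}
The point is now to check that $u^{\binom{2n+1}{2}}=u^{\binom{2n}{2}}u^{2n}$, which is exactly the instance $\binom{2n+1}{2}=\binom{2n}{2}+2n$ of the Pascal-type identity for binomial coefficients; substituting $u^{\binom{2n+1}{2}}x^{2n}=u^{\binom{2n}{2}}(ux)^{2n}$ turns the sum into $\cos_{s,t}(ux,u)$, as claimed. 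Identity (2) is the mirror image: differentiating the cosine series produces $\brk[c]{2n}_{s,t}$ in the numerator, which cancels against $\brk[c]{2n}_{s,t}!$ to leave $\brk[c]{2n-1}_{s,t}!$, the index shifts $n\mapsto n-1$, and one uses $\binom{2n}{2}=\binom{2n-1}{2}+(2n-1)$ to rewrite $u^{\binom{2n}{2}}x^{2n-1}=u^{\binom{2n-1}{2}}(ux)^{2n-1}\cdot(-1)$-free, with the leftover sign $(-1)^{n}=-(-1)^{n-1}$ supplying the minus in $-\sin_{s,t}(ux,u)$. I should also record that the $u=0$ cases ($\sin_{s,t}(x,0)=x$, $\cos_{s,t}(x,0)=1$) are consistent, since $\mathbf{D}_{s,t}x=1=\cos_{s,t}(0,0)$ and $\mathbf{D}_{s,t}1=0$.

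For identities (3)--(6) the strategy is purely formal manipulation. For $\tan_{s,t}=\sin_{s,t}/\cos_{s,t}$, apply the first quotient rule with $f=\sin_{s,t}(\cdot,u)$, $g=\cos_{s,t}(\cdot,u)$; the numerator becomes $\cos_{s,t}(\varphi x,u)\mathbf{D}_{s,t}\sin_{s,t}(x,u)-\sin_{s,t}(\varphi x,u)\mathbf{D}_{s,t}\cos_{s,t}(x,u)$, into which I substitute (1) and (2) to get $\cos_{s,t}(\varphi x,u)\cos_{s,t}(ux,u)+\sin_{s,t}(\varphi x,u)\sin_{s,t}(ux,u)$ over $\cos_{s,t}(\varphi x,u)\cos_{s,t}(\phi x,u)$; splitting this fraction and recognizing $\sin_{s,t}(\phi x,u)/\cos_{s,t}(\phi x,u)=\tan_{s,t}(\phi x,u)$ yields the stated form. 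The cotangent case is identical with sine and cosine interchanged and an overall sign. For $\sec_{s,t}=1/\cos_{s,t}$ and $\csc_{s,t}=1/\sin_{s,t}$, apply the quotient rule with $f\equiv1$ (so $\mathbf{D}_{s,t}f=0$), which immediately gives the one-term answers after inserting (1) and (2).

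I do not expect any genuine obstacle: everything reduces to the two exponent identities $\binom{2n+1}{2}=\binom{2n}{2}+2n$ and $\binom{2n}{2}=\binom{2n-1}{2}+(2n-1)$ (special cases of (\ref{iden7})) plus the previously proven calculus rules. The only points requiring care are bookkeeping of the sign $(-1)^{n}$ under the index shift in (2), choosing the correct one of the two quotient rules so that the arguments come out as $\varphi x$ rather than $\phi x$ in the denominators of (3)--(6), and checking term-by-term differentiability of the series — which is justified since these are entire functions in $z$ when $u\neq0$, so $\mathbf{D}_{s,t}$ acts termwise by its definition as a difference quotient.
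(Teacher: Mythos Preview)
The paper states Theorem~\ref{theo_der_trigo} without proof, so there is nothing to compare against; your plan---prove (1) and (2) by term-by-term differentiation and the exponent identity $\binom{m+1}{2}=\binom{m}{2}+m$, then feed these into the quotient rules for (3)--(6)---is the natural argument and is correct.

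One small slip in your sketch for (3): you say ``apply the first quotient rule,'' which puts $\cos_{s,t}(\varphi x,u)$ and $\sin_{s,t}(\varphi x,u)$ in the numerator, but then you claim to extract $\tan_{s,t}(\phi x,u)$. Those do not match. To land on the stated form
\[
\frac{\cos_{s,t}(ux,u)}{\cos_{s,t}(\varphi x,u)}+\tan_{s,t}(\phi x,u)\,\frac{\sin_{s,t}(ux,u)}{\cos_{s,t}(\varphi x,u)},
\]
you need the \emph{second} quotient rule, whose numerator is $g(\phi x)\mathbf{D}_{s,t}f-f(\phi x)\mathbf{D}_{s,t}g$; then the numerator becomes $\cos_{s,t}(\phi x,u)\cos_{s,t}(ux,u)+\sin_{s,t}(\phi x,u)\sin_{s,t}(ux,u)$, and dividing by $\cos_{s,t}(\varphi x,u)\cos_{s,t}(\phi x,u)$ splits exactly as stated. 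The same remark applies to (4). You already flag ``choosing the correct one of the two quotient rules'' as a point of care, so this is bookkeeping rather than a conceptual gap; just make sure the computation matches the caveat.
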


\begin{corollary}
For all $u\in\C$
    \begin{enumerate}
        \item $\D_{s,t}^2(\sin_{s,t}(x,u))=-u\sin_{s,t}(u^2x,u)$.
        \item $\D_{s,t}^2(\cos_{s,t}(x,u))=-u\cos_{s,t}(u^2x,u)$.
    \end{enumerate}
\end{corollary}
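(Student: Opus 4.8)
The plan is to iterate Theorem~\ref{theo_der_trigo}, parts 1 and 2, together with the elementary scaling rule for the Lucas-derivative. First I would record that rule: for any function $h$ and any constant $a\in\C$,
\begin{equation*}
    \mathbf{D}_{s,t}\big(h(ax)\big)=\frac{h(a\varphi x)-h(a\phi x)}{(\varphi-\phi)x}=a\cdot\frac{h(\varphi(ax))-h(\phi(ax))}{(\varphi-\phi)(ax)}=a\,(\mathbf{D}_{s,t}h)(ax),
\end{equation*}
which is immediate from the definition of $\mathbf{D}_{s,t}$ (and extends to $x=0$ by the usual limiting convention, since then both sides reduce to $a h'(0)$).

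For assertion 1, part 1 of Theorem~\ref{theo_der_trigo} gives $\mathbf{D}_{s,t}\sin_{s,t}(x,u)=\cos_{s,t}(ux,u)$. Differentiating once more, write $h(y)=\cos_{s,t}(y,u)$, so that the function $x\mapsto\cos_{s,t}(ux,u)$ is $h(ux)$; the scaling rule with $a=u$ gives $\mathbf{D}_{s,t}\cos_{s,t}(ux,u)=u\,(\mathbf{D}_{s,t}h)(ux)$, and by part 2 of Theorem~\ref{theo_der_trigo} we have $(\mathbf{D}_{s,t}h)(y)=-\sin_{s,t}(uy,u)$, hence $(\mathbf{D}_{s,t}h)(ux)=-\sin_{s,t}(u^{2}x,u)$. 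Combining, $\mathbf{D}_{s,t}^{2}\sin_{s,t}(x,u)=-u\sin_{s,t}(u^{2}x,u)$. Assertion 2 is symmetric: part 2 gives $\mathbf{D}_{s,t}\cos_{s,t}(x,u)=-\sin_{s,t}(ux,u)$, and differentiating again with $h(y)=\sin_{s,t}(y,u)$, the scaling rule and part 1 of Theorem~\ref{theo_der_trigo} yield $\mathbf{D}_{s,t}\big(-\sin_{s,t}(ux,u)\big)=-u\,(\mathbf{D}_{s,t}h)(ux)=-u\cos_{s,t}(u^{2}x,u)$.

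There is no substantial obstacle here; the only point requiring a little care is distinguishing the inner scaling factor $u$ produced by the chain-rule-type identity for $\mathbf{D}_{s,t}$ from the outer scaling $u$ already present in the arguments of $\sin_{s,t}$ and $\cos_{s,t}$ after the first differentiation — it is the composition of these two that produces the factor $u^{2}$ inside the final functions and the single factor $-u$ in front. The degenerate case $u=0$ is checked directly from the definitions $\sin_{s,t}(x,0)=x$ and $\cos_{s,t}(x,0)=1$, where both sides vanish.
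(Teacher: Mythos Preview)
Your proof is correct and follows exactly the route the paper intends: the corollary is stated without proof because it is obtained by applying parts 1 and 2 of Theorem~\ref{theo_der_trigo} twice, and your explicit use of the dilation rule $\mathbf{D}_{s,t}(h(ax))=a\,(\mathbf{D}_{s,t}h)(ax)$ makes that iteration precise. Your separate treatment of the degenerate case $u=0$ is a nice touch that the paper omits.
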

Then the functions $\sin_{s,t}(x,u)$ and $\cos_{s,t}(x,u)$ are solutions of the proportional functional equation 
\begin{equation}
    \D_{s,t}^2f(x)=-uf(u^2x).
\end{equation}

\subsection{Lucas-Pantograph-Euler formula}

Next, we obtain the deformed Lucas-Pantograph-analog of Euler's formula 
\begin{equation*}
e^{ix}=\cos x+i\sin x.   
\end{equation*}

\begin{theorem}\label{prop_exp_complex_neg}
For all $u\in\C$
\begin{enumerate}
    \item $\exp_{s,t}(ix,u)=\cos_{s,t}(x,u)+i\sin_{s,t}(x,u)$.
    \item $\exp_{s,t}(z,-u)=\cos_{s,t}(z,u)+\sin_{s,t}(z,u)$.
\end{enumerate}
\end{theorem}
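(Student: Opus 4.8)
The plan is to prove both identities by direct manipulation of the defining power series, splitting each series into its even-indexed and odd-indexed parts. For part (1), I would start from
\[
\exp_{s,t}(ix,u)=\sum_{n=0}^{\infty}u^{\binom{n}{2}}\frac{(ix)^n}{\brk[c]{n}_{s,t}!},
\]
separate the sum over even $n=2m$ and odd $n=2m+1$, and use $i^{2m}=(-1)^m$ and $i^{2m+1}=i(-1)^m$. The even part becomes $\sum_{m}(-1)^m u^{\binom{2m}{2}}\frac{x^{2m}}{\brk[c]{2m}_{s,t}!}=\cos_{s,t}(x,u)$ and the odd part becomes $i\sum_{m}(-1)^m u^{\binom{2m+1}{2}}\frac{x^{2m+1}}{\brk[c]{2m+1}_{s,t}!}=i\sin_{s,t}(x,u)$, which are precisely the series in the definitions of $\cos_{s,t}$ and $\sin_{s,t}$. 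The $u=0$ case is handled separately: $\exp_{s,t}(ix,0)=1+ix=\cos_{s,t}(x,0)+i\sin_{s,t}(x,0)$ by the boundary definitions.

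For part (2), I would argue similarly but track the sign change coming from replacing $u$ by $-u$. Writing
\[
\exp_{s,t}(z,-u)=\sum_{n=0}^{\infty}(-u)^{\binom{n}{2}}\frac{z^n}{\brk[c]{n}_{s,t}!}=\sum_{n=0}^{\infty}(-1)^{\binom{n}{2}}u^{\binom{n}{2}}\frac{z^n}{\brk[c]{n}_{s,t}!},
\]
I split into even and odd indices again. The key arithmetic fact is the behavior of $(-1)^{\binom{n}{2}}$ on each residue class: for $n=2m$ one has $\binom{2m}{2}=m(2m-1)$, so $(-1)^{\binom{2m}{2}}=(-1)^m$; for $n=2m+1$ one has $\binom{2m+1}{2}=m(2m+1)$, so $(-1)^{\binom{2m+1}{2}}=(-1)^m$ as well. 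Hence the even part is $\sum_m(-1)^m u^{\binom{2m}{2}}\frac{z^{2m}}{\brk[c]{2m}_{s,t}!}=\cos_{s,t}(z,u)$ and the odd part is $\sum_m(-1)^m u^{\binom{2m+1}{2}}\frac{z^{2m+1}}{\brk[c]{2m+1}_{s,t}!}=\sin_{s,t}(z,u)$, giving the claim. Again the $u=0$ case follows from the piecewise definitions, since $\exp_{s,t}(z,0)=1+z=\cos_{s,t}(z,0)+\sin_{s,t}(z,0)$.

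The only genuine obstacle is the sign bookkeeping: one must verify the parity identities $(-1)^{\binom{2m}{2}}=(-1)^{\binom{2m+1}{2}}=(-1)^m$ carefully, so that the signs produced by $(-u)^{\binom{n}{2}}$ match exactly the $(-1)^n$ (that is, $(-1)^m$ after reindexing) appearing in the definitions of $\sin_{s,t}$ and $\cos_{s,t}$. Once that is confirmed, both identities are immediate from splitting the series; no convergence subtleties arise beyond the formal level at which these functions are defined, and one could also note that part (1) is the special case of part (2) obtained by formally substituting $u\mapsto -u$ and $z\mapsto ix$ with $i^2=-1$, though I would present the two computations independently for clarity.
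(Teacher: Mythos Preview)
Your proposal is correct and follows essentially the same approach as the paper: both proofs handle the $u=0$ case separately using the piecewise definitions, and for $u\neq 0$ both split the defining power series into even- and odd-indexed parts, using the parity identities $(-1)^{\binom{2m}{2}}=(-1)^{m(2m-1)}=(-1)^m$ and $(-1)^{\binom{2m+1}{2}}=(-1)^{m(2m+1)}=(-1)^m$ for part (2). The only cosmetic difference is that the paper writes these exponents as $n(2n-1)$ and $(2n+1)n$ before reducing, whereas you compute the binomial coefficients explicitly first.
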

\begin{proof}
When $u=0$, then 
\begin{align*}
    \exp_{s,t}(ix,0)&=1+ix=\cos_{s,t}(x,0)+i\sin_{s,t}(x,0),\\
    \exp_{s,t}(z,0)&=1+z=\cos_{s,t}(z,0)+\sin_{s,t}(z,0).
\end{align*}
For $u\neq0$, we use the power series expansion of $\e_{s,t}(ix,u)$ and $\exp_{s,t}(z,-u)$, respectively. Thus, we have that
\begin{align*}
    \exp_{s,t}(ix,u)&=\sum_{n=0}^{\infty}u^{\binom{n}{2}}\frac{(ix)^n}{\brk[c]{n}_{s,t}!}\\
    &=\sum_{n=0}^{\infty}u^{\binom{2n}{2}}(-1)^{n}\frac{x^{2n}}{\brk[c]{2n}_{s,t}!}+i\sum_{n=0}^{\infty}u^{\binom{2n+1}{2}}(-1)^{n}\frac{x^{2n+1}}{\brk[c]{2n+1}_{s,t}!}\\
    &=\cos_{s,t}(x,u)+i\sin_{s,t}(x,u)
\end{align*}
and
\begin{align*}
    \exp_{s,t}(z,-u)&=\sum_{n=0}^{\infty}(-u)^{\binom{n}{2}}\frac{x^{n}}{\brk[c]{n}_{s,t}!}\\
    &=\sum_{n=0}^{\infty}(-1)^{\frac{n(n-1)}{2}}u^{\binom{n}{2}}\frac{x^{n}}{\brk[c]{n}_{s,t}!}\\
    &=\sum_{n=0}^{\infty}(-1)^{n(2n-1)}u^{\binom{2n}{2}}\frac{x^{2n}}{\brk[c]{2n}_{s,t}!}+\sum_{n=0}^{\infty}(-1)^{(2n+1)n}u^{\binom{2n+1}{2}}\frac{x^{2n+1}}{\brk[c]{2n+1}_{s,t}!}\\
    &=\sum_{n=0}^{\infty}(-1)^{n}u^{\binom{2n}{2}}\frac{x^{2n}}{\brk[c]{2n}_{s,t}!}+\sum_{n=0}^{\infty}(-1)^{n}u^{\binom{2n+1}{2}}\frac{x^{2n+1}}{\brk[c]{2n+1}_{s,t}!}\\
    &=\cos_{s,t}(z,u)+\sin_{s,t}(z,u).
\end{align*}
\end{proof}

For all $u\in\C$, we can express the Lucas-Pantograph type sine and cosine functions in terms of the Lucas-Pantograph type exponential function as
\begin{align*}
    \sin_{s,t}(x,u)&=\frac{\exp_{s,t}(ix,u)-\exp_{s,t}(-ix,u)}{2i}
    =\frac{\exp_{s,t}(x,-u)-\exp_{s,t}(-x,-u)}{2},\\
    \cos_{s,t}(x,u)&=\frac{\exp_{s,t}(ix,u)+\exp_{s,t}(-ix,u)}{2}
    =\frac{\exp_{s,t}(x,u)+\exp_{s,t}(-x,u)}{2}.
\end{align*}

Next, we have the Lucas-Pantograph-analogue of the identity
\begin{equation*}
    e^{x+iy}=e^{x}(\cos y+i\sin y).
\end{equation*}
\begin{theorem}
For $x,y,u,v\in\C$
\begin{enumerate}
\item $\exp_{s,t}(x\oplus_{u,v}iy)=\exp_{s,t}(x,u)(\cos_{s,t}(y,v)+i\sin_{s,t}(y,v))$,
\item $\exp_{s,t}(x\oplus_{u,-v}y)=\exp_{s,t}(x,u)(\cos_{s,t}(y,v)+\sin_{s,t}(y,v))$.
\end{enumerate}
\end{theorem}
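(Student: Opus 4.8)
The plan is to deduce both identities directly from two results already in hand: the product rule for the Lucas-Pantograph exponential, $\exp_{s,t}(x\oplus_{u,v}y)=\exp_{s,t}(x,u)\exp_{s,t}(y,v)$ from Definition~\ref{def_expbin}, and the Lucas-Pantograph--Euler formula of Theorem~\ref{prop_exp_complex_neg}. No fresh computation with the series should be required; the whole statement is a corollary of these two facts.

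For statement~1, I would first apply Definition~\ref{def_expbin} with the second argument $y$ replaced by $iy$ and the second parameter kept equal to $v$; since $iy\in\C$ this is a legitimate specialization, and it yields
\[
\exp_{s,t}(x\oplus_{u,v}iy)=\exp_{s,t}(x,u)\,\exp_{s,t}(iy,v).
\]
Then I would invoke part~1 of Theorem~\ref{prop_exp_complex_neg} with $x$ replaced by $y$ and $u$ replaced by $v$ to rewrite $\exp_{s,t}(iy,v)=\cos_{s,t}(y,v)+i\sin_{s,t}(y,v)$, and substitute. Statement~2 is obtained in exactly the same way: Definition~\ref{def_expbin} with second parameter $-v$ gives $\exp_{s,t}(x\oplus_{u,-v}y)=\exp_{s,t}(x,u)\exp_{s,t}(y,-v)$, and part~2 of Theorem~\ref{prop_exp_complex_neg} gives $\exp_{s,t}(y,-v)=\cos_{s,t}(y,v)+\sin_{s,t}(y,v)$.

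The only place requiring any care is the degenerate parameter values, since the theorem is asserted for all $u,v\in\C$ without excluding $0$. When $v=0$ the symbol $(x\oplus_{u,0}iy)^{(n)}$ is defined as the limit $\lim_{v\to0}(x\oplus_{u,v}iy)^{(n)}$, and one checks that the corresponding series collapses so that $\exp_{s,t}(x\oplus_{u,0}iy)=\exp_{s,t}(x,u)(1+iy)$, which equals $\exp_{s,t}(x,u)\big(\cos_{s,t}(y,0)+i\sin_{s,t}(y,0)\big)$ by the $u=0$ clauses of the sine and cosine definitions; the same remark handles $-v$ when $v=0$, and the case $u=0$ is dealt with via $\exp_{s,t}(\cdot,0)=1+(\cdot)$, matching the $u=0$ branch of Theorem~\ref{prop_exp_complex_neg}. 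Apart from this routine bookkeeping there is no obstacle, so I expect the degenerate-case check to be the only mildly fiddly step, while the main argument is two lines for each part.
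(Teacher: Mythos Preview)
Your proposal is correct and matches the paper's approach exactly: the paper's proof is the single line ``The statements follow from Theorem~\ref{prop_exp_complex_neg},'' which implicitly uses the product formula of Definition~\ref{def_expbin} followed by the Euler-type identities, precisely as you spell out. Your additional remarks on the degenerate cases $u=0$ or $v=0$ go beyond what the paper records but are harmless and correct.
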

\begin{proof}
The statements follow from Theorem \ref{prop_exp_complex_neg}.
\end{proof}

\subsection{Parity}

\begin{proposition}\label{prop_sym_trigo}
For all $u\in\C$
\begin{align*}
    \sin_{s,t}(-z,u)&=-\sin_{s,t}(z,u)\text{ and } \cos_{s,t}(-z,u)=\cos_{s,t}(z,u).\\
    \tan_{s,t}(-z,u)&=-\tan_{s,t}(z,u)\text{ and } \cot_{s,t}(-z,u)=\cot_{s,t}(z,u).\\
    \csc_{s,t}(-z,u)&=-\csc_{s,t}(z,u)\text{ and } \sec_{s,t}(-z,u)=\sec_{s,t}(z,u).
\end{align*}
\end{proposition}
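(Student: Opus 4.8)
The plan is to read the parities of $\sin_{s,t}(\cdot,u)$ and $\cos_{s,t}(\cdot,u)$ straight off their power-series definitions, and then to transfer these to the four remaining functions through their defining quotients. This is the same mechanism as in the classical case, so I expect the argument to be short.

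For $u\neq 0$, the series $\sin_{s,t}(z,u)=\sum_{n\geq 0}(-1)^{n}u^{\binom{2n+1}{2}}z^{2n+1}/\brk[c]{2n+1}_{s,t}!$ involves only odd powers of $z$, so the substitution $z\mapsto -z$ multiplies the $n$-th term by $(-1)^{2n+1}=-1$ and hence $\sin_{s,t}(-z,u)=-\sin_{s,t}(z,u)$. Dually, $\cos_{s,t}(z,u)=\sum_{n\geq 0}(-1)^{n}u^{\binom{2n}{2}}z^{2n}/\brk[c]{2n}_{s,t}!$ involves only even powers, each fixed by $z\mapsto -z$, so $\cos_{s,t}(-z,u)=\cos_{s,t}(z,u)$. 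The degenerate branch $u=0$ is disposed of by inspection: $\sin_{s,t}(z,0)=z$ is odd and $\cos_{s,t}(z,0)=1$ is even, so both identities persist.

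With the parities of $\sin_{s,t}$ and $\cos_{s,t}$ in hand, the identities for $\tan_{s,t}=\sin_{s,t}/\cos_{s,t}$, $\cot_{s,t}=\cos_{s,t}/\sin_{s,t}$, $\sec_{s,t}=1/\cos_{s,t}$ and $\csc_{s,t}=1/\sin_{s,t}$ follow at once from the oddness of the numerator or denominator: for instance $\tan_{s,t}(-z,u)=(-\sin_{s,t}(z,u))/\cos_{s,t}(z,u)=-\tan_{s,t}(z,u)$, and the rest are analogous one-line sign computations. For $u=0$ one checks directly that $\tan_{s,t}(z,0)=z$, $\cot_{s,t}(z,0)=1/z$, $\csc_{s,t}(z,0)=1/z$ and $\sec_{s,t}(z,0)=1$ have the same parity as their series counterparts, so the conclusions are uniform in $u$.

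There is no substantive obstacle here: the proposition is a purely formal consequence of the two base computations on the sine and cosine series. The only points that require care are keeping the piecewise ($u=0$) clauses of each definition in play so the argument is genuinely complete rather than merely formal, and the elementary sign bookkeeping $(-1)^{2n+1}=-1$, $(-1)^{2n}=1$ that drives the sine and cosine cases; everything else propagates mechanically through the quotient definitions.
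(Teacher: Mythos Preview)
Your argument is correct and is exactly the paper's: note from the series that $\sin_{s,t}(\cdot,u)$ is odd and $\cos_{s,t}(\cdot,u)$ is even, then propagate these through the quotient definitions. One caveat worth flagging: the stated identity $\cot_{s,t}(-z,u)=\cot_{s,t}(z,u)$ is evidently a typo in the paper, since your computation (and the paper's own sketch) gives $\cot_{s,t}(-z,u)=\cos_{s,t}(z,u)/(-\sin_{s,t}(z,u))=-\cot_{s,t}(z,u)$; the correct parity is odd, consistent with the classical case and with your $u=0$ check $\cot_{s,t}(z,0)=1/z$.
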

\begin{proof}
The functions $\sin_{s,t}(z,u)$ are $\cos_{s,t}(z,u)$ are odd and even functions, respectively. From here follow the first two identities. 
\end{proof}

\subsection{Sum and difference formulas}

In the following definition, we introduce the $(u,v)$-binomial $(s,t)$-trigonometric functions that will be useful for finding $(s,t)$-trigonometric addition and subtraction formulas.
\begin{definition}
For all $u,v,x,y,s,t\in\C$, $s\neq0,t\neq0$, we define the $(u,v)$-binomial trigonometric-Lucas functions as
\begin{align*}
\sin_{s,t}(x\oplus_{u,v}y)&=\sum_{n=0}^{\infty}(-1)^{n}\frac{(x\oplus_{u,v}y)_{s,t}^{(2n+1)}}{\brk[c]{2n+1}_{s,t}!},\\
\cos_{s,t}(x\oplus_{u,v}y)&=\sum_{n=0}^{\infty}(-1)^{n}\frac{(x\oplus_{u,v}y)_{s,t}^{(2n)}}{\brk[c]{2n}_{s,t}!},
\end{align*}
and
\begin{align*}
    \tan_{s,t}(x\oplus_{u,v}y)&=\frac{\sin_{s,t}(x\oplus_{u,v}y)}{\cos_{s,t}(x\oplus_{u,v}y)},\hspace{0.3cm}\cot_{s,t}(x\oplus_{u,v}y)=\frac{\cos_{s,t}(x\oplus_{u,v}y)}{\sin_{s,t}(x\oplus_{u,v}y)}.\\
    \sec_{s,t}(x\oplus_{u,v}y)&=\frac{1}{\cos_{s,t}(x\oplus_{u,v}y)},\hspace{0.3cm}\csc_{s,t}(x\oplus_{u,v}y)=\frac{1}{\sin_{s,t}(x\oplus_{u,v}y)}.
\end{align*}
\end{definition}

\begin{theorem}
For all $x,y,u,v\in\C$
    \begin{equation}
        \e_{s,t}(x\oplus_{-u,-v}y)=\cos_{s,t}(x\ominus_{u,v}y)+\sin_{s,t}(x\oplus_{u,v}y).
    \end{equation}
\end{theorem}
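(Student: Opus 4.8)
The plan is to expand the left-hand side via its defining series (Definition~\ref{def_expbin}), split that series according to the parity of the summation index, and then recognise the two resulting halves as Lucas-Pantograph type trigonometric series by invoking Theorem~\ref{theo_bin_neg}.

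Concretely, I would first write, for $u\neq0$ and $v\neq0$,
\[
\e_{s,t}(x\oplus_{-u,-v}y)=\sum_{n=0}^{\infty}\frac{(x\oplus_{-u,-v}y)_{s,t}^{(n)}}{\brk[c]{n}_{s,t}!}
=\sum_{m=0}^{\infty}\frac{(x\oplus_{-u,-v}y)_{s,t}^{(2m)}}{\brk[c]{2m}_{s,t}!}
+\sum_{m=0}^{\infty}\frac{(x\oplus_{-u,-v}y)_{s,t}^{(2m+1)}}{\brk[c]{2m+1}_{s,t}!}.
\]
Then I would substitute the two identities of Theorem~\ref{theo_bin_neg}: the even-degree Lucasnomials satisfy $(x\oplus_{-u,-v}y)_{s,t}^{(2m)}=(-1)^{m}(x\ominus_{u,v}y)_{s,t}^{(2m)}$, and the odd-degree ones satisfy $(x\oplus_{-u,-v}y)_{s,t}^{(2m+1)}=(-1)^{m}(x\oplus_{u,v}y)_{s,t}^{(2m+1)}$ (the exponent on the right of the second line of Theorem~\ref{theo_bin_neg} is to be read as $2m+1$). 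After this substitution the first sum becomes $\sum_{m\geq0}(-1)^{m}(x\ominus_{u,v}y)_{s,t}^{(2m)}/\brk[c]{2m}_{s,t}!=\cos_{s,t}(x\ominus_{u,v}y)$ and the second becomes $\sum_{m\geq0}(-1)^{m}(x\oplus_{u,v}y)_{s,t}^{(2m+1)}/\brk[c]{2m+1}_{s,t}!=\sin_{s,t}(x\oplus_{u,v}y)$, by the definition of the $(u,v)$-binomial trigonometric-Lucas functions; adding them yields the asserted identity.

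I do not expect a genuine obstacle: all of the delicate sign arithmetic with $(-1)^{\binom{n}{2}}$ is already carried out in the proof of Theorem~\ref{theo_bin_neg} (via Eq.~(\ref{iden8})), so this statement is essentially a re-bracketing of that result into even and odd parts. The one point deserving an explicit word is the degenerate range $u=0$ or $v=0$: there one appeals to the limiting definitions of $(x\oplus_{u,v}y)^{(n)}$ from Definition~\ref{def_lucas_theo}, passing to the limit termwise in the (locally uniformly convergent) series, or equivalently checks both sides directly against $\e_{s,t}(\,\cdot\,,0)=1+(\,\cdot\,)$, $\sin_{s,t}(\,\cdot\,,0)$ and $\cos_{s,t}(\,\cdot\,,0)$ together with Theorem~\ref{prop_exp_complex_neg}.
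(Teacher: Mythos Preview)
Your proposal is correct and follows essentially the same route as the paper: expand the series from Definition~\ref{def_expbin}, split by parity, apply Theorem~\ref{theo_bin_neg} to the even and odd halves, and identify the results with the $(u,v)$-binomial cosine and sine. The paper's proof differs only cosmetically, invoking the intermediate formula $(x\oplus_{-u,-v}y)_{s,t}^{(n)}=(-1)^{\binom{n}{2}}(x\oplus_{u,v}(-1)^{1-n}y)_{s,t}^{(n)}$ from the proof of Theorem~\ref{theo_bin_neg} before splitting, rather than the two parity cases afterwards; your remark on the degenerate case $u=0$ or $v=0$ is an addition the paper does not make.
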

\begin{proof}
From Theorem \ref{theo_bin_neg}
    \begin{align*}
        \e_{s,t}(x\oplus_{-u,-v}y)&=\sum_{n=0}^{\infty}(-1)^{\binom{n}{2}}\frac{(x\oplus_{u,v}(-1)^{1-n}y)_{s,t}^{(n)}}{\brk[c]{n}_{s,t}!}\\
        &=\sum_{n=0}^{\infty}(-1)^{n}\frac{(x\ominus_{u,v}y)_{s,t}^{(2n)}}{\brk[c]{2n}_{s,t}!}+\sum_{n=0}^{\infty}(-1)^{n}\frac{(x\oplus_{u,v}y)_{s,t}^{(2n+1)}}{\brk[c]{2n+1}_{s,t}!}\\
        &=\cos_{s,t}(x\ominus_{u,v}y)+\sin_{s,t}(x\oplus_{u,v}y).
    \end{align*}
\end{proof}

From Definition \ref{def_expbin} we have the following theorem.
\begin{theorem}\label{theo_rep_trigobin}
\begin{equation}
    \sin_{s,t}(x\oplus_{u,v}y)=\frac{\e_{s,t}(ix\oplus_{u,v}iy)-\e_{s,t}((-ix)\oplus_{u,v}(-iy))}{2i}
\end{equation}
and
\begin{equation}
    \cos_{s,t}(x\oplus_{u,v}y)=\frac{\e_{s,t}(ix\oplus_{u,v}iy)+\e_{s,t}((-ix)\oplus_{u,v}(-iy))}{2}.
\end{equation}
\end{theorem}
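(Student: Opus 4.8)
The plan is to expand both exponentials on the right-hand side as power series via Definition \ref{def_expbin}, exploit the homogeneity of the $(u,v)$-deformed Lucasnomial, and then separate terms by parity exactly as in the classical derivation of Euler's formula (and as in the proof of Theorem \ref{prop_exp_complex_neg}).

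First I would write, applying Definition \ref{def_expbin} with the arguments $ix,iy$ and $-ix,-iy$,
\[
\e_{s,t}(ix\oplus_{u,v}iy)=\sum_{n=0}^{\infty}\frac{(ix\oplus_{u,v}iy)_{s,t}^{(n)}}{\brk[c]{n}_{s,t}!},\qquad \e_{s,t}((-ix)\oplus_{u,v}(-iy))=\sum_{n=0}^{\infty}\frac{((-ix)\oplus_{u,v}(-iy))_{s,t}^{(n)}}{\brk[c]{n}_{s,t}!}.
\]
By item 5 of Theorem \ref{theo_propi_uvFbinom} (taking $z=i$ and $z=-i$ respectively), one has $(ix\oplus_{u,v}iy)_{s,t}^{(n)}=i^{n}(x\oplus_{u,v}y)_{s,t}^{(n)}$ and $((-ix)\oplus_{u,v}(-iy))_{s,t}^{(n)}=(-i)^{n}(x\oplus_{u,v}y)_{s,t}^{(n)}$, so the two series become $\sum_{n\geq0} i^{n}(x\oplus_{u,v}y)_{s,t}^{(n)}/\brk[c]{n}_{s,t}!$ and $\sum_{n\geq0}(-i)^{n}(x\oplus_{u,v}y)_{s,t}^{(n)}/\brk[c]{n}_{s,t}!$.

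Then I would form the two combinations and sort by parity of $n$. For the difference, $i^{n}-(-i)^{n}$ vanishes when $n$ is even and equals $2i(-1)^{k}$ when $n=2k+1$; dividing by $2i$ leaves $\sum_{k\geq0}(-1)^{k}(x\oplus_{u,v}y)_{s,t}^{(2k+1)}/\brk[c]{2k+1}_{s,t}!$, which is precisely the definition of $\sin_{s,t}(x\oplus_{u,v}y)$. For the sum, $i^{n}+(-i)^{n}$ vanishes when $n$ is odd and equals $2(-1)^{k}$ when $n=2k$; dividing by $2$ leaves $\sum_{k\geq0}(-1)^{k}(x\oplus_{u,v}y)_{s,t}^{(2k)}/\brk[c]{2k}_{s,t}!=\cos_{s,t}(x\oplus_{u,v}y)$.

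There is no substantive obstacle: the argument is just the bookkeeping of powers of $i$ by parity, and the only structural ingredient is the homogeneity property (item 5 of Theorem \ref{theo_propi_uvFbinom}), which converts the scaled $(u,v)$-powers into $i^{n}$ and $(-i)^{n}$ times the original ones. The single point requiring a word of care is that splitting each series into its even- and odd-indexed parts is legitimate; this follows from absolute convergence in the region where $\e_{s,t}$ is defined, and alternatively the statement may be read as an identity of formal power series in $x$ and $y$, in which case the rearrangement is automatic.
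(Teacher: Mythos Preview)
Your argument is correct and is essentially the same as the paper's: both use the series form of Definition~\ref{def_expbin} together with the homogeneity $(ix\oplus_{u,v}iy)_{s,t}^{(n)}=i^{n}(x\oplus_{u,v}y)_{s,t}^{(n)}$ and a parity split. The only cosmetic difference is that the paper first records the Euler-type identity $\e_{s,t}(ix\oplus_{u,v}iy)=\cos_{s,t}(x\oplus_{u,v}y)+i\sin_{s,t}(x\oplus_{u,v}y)$ and then solves the resulting linear system, whereas you form the sum and difference directly.
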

\begin{proof}
Use Definition \ref{def_expbin} to obtain
\begin{equation*}
    \e_{s,t}(ix\oplus_{u,v}iy)=\cos_{s,t}(x\oplus_{u,v}y)+i\sin_{s,t}(x\oplus_{u,v}y).
\end{equation*}
\end{proof}

\begin{theorem}\label{theo_add_sincos}
For every pair of complex numbers $u,v$, the following are sum and difference identities for the Lucas-Pantograph type sine and cosine functions
\begin{enumerate}
    \item $\sin_{s,t}(x\oplus_{u,v}y)=\sin_{s,t}(x,u)\cos_{s,t}(y,v)+\cos_{s,t}(x,u)\sin_{s,t}(y,v)$,
    \item  $\sin_{s,t}(x\ominus_{u,v}y)=\sin_{s,t}(x,u)\cos_{s,t}(y,v)-\cos_{s,t}(x,u)\sin_{s,t}(y,v)$,
    \item $\cos_{s,t}(x\oplus_{u,v}y)=\cos_{s,t}(x,u)\cos_{s,t}(y,v)-\sin_{s,t}(x,u)\sin_{s,t}(y,v)$,
    \item $\cos_{s,t}(x\ominus_{u,v}y)=\cos_{s,t}(x,u)\cos_{s,t}(y,v)+\sin_{s,t}(x,u)\sin_{s,t}(y,v)$.
\end{enumerate}
\end{theorem}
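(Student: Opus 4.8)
The plan is to deduce all four identities from the Lucas-Pantograph--Euler formula (Theorem \ref{prop_exp_complex_neg}), the multiplicativity $\e_{s,t}(x\oplus_{u,v}y)=\e_{s,t}(x,u)\e_{s,t}(y,v)$ of Definition \ref{def_expbin}, and the representation of $\sin_{s,t}(x\oplus_{u,v}y)$ and $\cos_{s,t}(x\oplus_{u,v}y)$ as the imaginary and real parts of $\e_{s,t}(ix\oplus_{u,v}iy)$ supplied by Theorem \ref{theo_rep_trigobin}. First I would note that Theorem \ref{prop_exp_complex_neg}(1), applied once to $x$ and once to $-x$ and combined with the parity relations of Proposition \ref{prop_sym_trigo}, yields $\e_{s,t}(ix,u)=\cos_{s,t}(x,u)+i\sin_{s,t}(x,u)$ and $\e_{s,t}(-ix,u)=\cos_{s,t}(x,u)-i\sin_{s,t}(x,u)$, and the analogous formulas in $y$ with parameter $v$.

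Next, using Definition \ref{def_expbin},
\begin{align*}
\e_{s,t}(ix\oplus_{u,v}iy)&=\e_{s,t}(ix,u)\,\e_{s,t}(iy,v)\\
&=\bigl(\cos_{s,t}(x,u)+i\sin_{s,t}(x,u)\bigr)\bigl(\cos_{s,t}(y,v)+i\sin_{s,t}(y,v)\bigr),
\end{align*}
and likewise $\e_{s,t}((-ix)\oplus_{u,v}(-iy))=\bigl(\cos_{s,t}(x,u)-i\sin_{s,t}(x,u)\bigr)\bigl(\cos_{s,t}(y,v)-i\sin_{s,t}(y,v)\bigr)$. I would then expand both products, add them and divide by $2$; by Theorem \ref{theo_rep_trigobin} the left side becomes $\cos_{s,t}(x\oplus_{u,v}y)$ and the right side becomes $\cos_{s,t}(x,u)\cos_{s,t}(y,v)-\sin_{s,t}(x,u)\sin_{s,t}(y,v)$, which is identity 3. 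Subtracting the two products and dividing by $2i$ gives identity 1 in the same way.

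Finally, identities 2 and 4 follow by specializing $y$ to $-y$: since $(x\ominus_{u,v}y)_{s,t}^{(n)}=(x\oplus_{u,v}(-y))_{s,t}^{(n)}$ by Definition \ref{def_lucas_theo}, the series defining $\sin_{s,t}(x\ominus_{u,v}y)$ and $\cos_{s,t}(x\ominus_{u,v}y)$ coincide with those of $\sin_{s,t}(x\oplus_{u,v}(-y))$ and $\cos_{s,t}(x\oplus_{u,v}(-y))$; applying identities 1 and 3 and then the parity relations $\sin_{s,t}(-y,v)=-\sin_{s,t}(y,v)$, $\cos_{s,t}(-y,v)=\cos_{s,t}(y,v)$ from Proposition \ref{prop_sym_trigo} turns them into 2 and 4.

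I do not anticipate a real obstacle: every ingredient is already established and the computation is essentially bookkeeping. The one place to be careful is the sign in $\e_{s,t}(-ix,u)=\cos_{s,t}(x,u)-i\sin_{s,t}(x,u)$, which should be justified through Theorem \ref{prop_exp_complex_neg} and Proposition \ref{prop_sym_trigo} rather than taken for granted. A self-contained alternative would avoid the complex unit entirely and expand $\sin_{s,t}(x\oplus_{u,v}y)$ and $\cos_{s,t}(x\oplus_{u,v}y)$ directly from Definition \ref{def_lucas_theo}, split each Lucasnomial sum into even- and odd-index contributions, and re-index with the help of \eqref{iden7}--\eqref{iden8}; this also works but is considerably more tedious, so I would present the Euler-formula argument.
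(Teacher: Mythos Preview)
Your proposal is correct and follows essentially the same route as the paper: the paper's proof is the one-line ``Follows from Theorem \ref{theo_rep_trigobin} and Definition \ref{def_expbin},'' and your argument is precisely the unpacking of that sentence, with the Euler formula (Theorem \ref{prop_exp_complex_neg}) and parity (Proposition \ref{prop_sym_trigo}) made explicit where the paper leaves them implicit.
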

\begin{proof}
Follows from Theorem \ref{theo_rep_trigobin} and Definition \ref{def_expbin}.
\end{proof}

\begin{theorem}\label{theo_add_tan}
For every pair of complex numbers $u,v$, the following are sum and difference identities for the Lucas-Pantograph type tangent function
\begin{enumerate}
    \item 
    \begin{equation*}
        \tan_{s,t}(x\oplus_{u,v}y)=\frac{\tan_{s,t}(x,u)+\tan_{s,t}(y,v)}{1-\tan_{s,t}(x,u)\tan_{s,t}(y,v)}
    \end{equation*}
    \item 
    \begin{equation*}
        \tan_{s,t}(x\ominus_{u,v}y)=\frac{\tan_{s,t}(x,u)-\tan_{s,t}(y,v)}{1+\tan_{s,t}(x,u)\tan_{s,t}(y,v)}
    \end{equation*}
\end{enumerate}
\end{theorem}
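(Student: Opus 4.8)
The plan is to reduce both identities to the already-established sum and difference formulas for the Lucas-Pantograph type sine and cosine (Theorem~\ref{theo_add_sincos}), exactly as in the classical derivation of the tangent addition formula. First I would start from the defining relation
\begin{equation*}
    \tan_{s,t}(x\oplus_{u,v}y)=\frac{\sin_{s,t}(x\oplus_{u,v}y)}{\cos_{s,t}(x\oplus_{u,v}y)}
\end{equation*}
and substitute parts 1 and 3 of Theorem~\ref{theo_add_sincos} into the numerator and denominator respectively, giving
\begin{equation*}
    \tan_{s,t}(x\oplus_{u,v}y)=\frac{\sin_{s,t}(x,u)\cos_{s,t}(y,v)+\cos_{s,t}(x,u)\sin_{s,t}(y,v)}{\cos_{s,t}(x,u)\cos_{s,t}(y,v)-\sin_{s,t}(x,u)\sin_{s,t}(y,v)}.
\end{equation*}

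Next I would divide both numerator and denominator by the product $\cos_{s,t}(x,u)\cos_{s,t}(y,v)$ (legitimate wherever this product is nonzero, which is the natural domain of the quantities involved). Each resulting quotient collapses to a tangent: $\sin_{s,t}(x,u)/\cos_{s,t}(x,u)=\tan_{s,t}(x,u)$, $\sin_{s,t}(y,v)/\cos_{s,t}(y,v)=\tan_{s,t}(y,v)$, and the cross term $\sin_{s,t}(x,u)\sin_{s,t}(y,v)/(\cos_{s,t}(x,u)\cos_{s,t}(y,v))=\tan_{s,t}(x,u)\tan_{s,t}(y,v)$. This yields statement~1 directly.

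For the difference formula (statement~2), I would replace $y$ by $-y$ throughout and invoke the parity relations of Proposition~\ref{prop_sym_trigo}, namely $\sin_{s,t}(-y,v)=-\sin_{s,t}(y,v)$ and $\cos_{s,t}(-y,v)=\cos_{s,t}(y,v)$, so that $\tan_{s,t}(-y,v)=-\tan_{s,t}(y,v)$; combined with the identity $(x\ominus_{u,v}y)^{(n)}=(x\oplus_{u,v}(-y))^{(n)}$ from Definition~\ref{def_lucas_theo} and the definition of $\tan_{s,t}(x\ominus_{u,v}y)$ as the corresponding quotient, statement~2 follows from statement~1 by formal substitution.

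There is no substantive obstacle here: the argument is a purely algebraic manipulation resting entirely on Theorem~\ref{theo_add_sincos} and Proposition~\ref{prop_sym_trigo}. The only point requiring a word of care is the division step, i.e.\ tacitly restricting to values of $x,y,u,v$ for which $\cos_{s,t}(x,u)$ and $\cos_{s,t}(y,v)$ do not vanish, so that both the classical form of the identity and the objects $\tan_{s,t}(x,u),\tan_{s,t}(y,v)$ are defined; as an identity between formal/meromorphic expressions this causes no difficulty.
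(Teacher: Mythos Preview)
Your proposal is correct and is exactly the intended argument: the paper states this theorem without proof immediately after Theorem~\ref{theo_add_sincos}, so the implicit derivation is precisely the classical one you give---divide the sine and cosine addition formulas by $\cos_{s,t}(x,u)\cos_{s,t}(y,v)$, and obtain the difference case via the parity relations of Proposition~\ref{prop_sym_trigo}. There is nothing to add.
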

    
\begin{corollary}\label{coro_4}
For all $u,v\in\C$ and replacing $y=-x$ in Theorem \ref{theo_add_sincos}, then
\begin{enumerate}
    \item $\sin_{s,t}(x,u)\cos_{s,t}(x,v)-\cos_{s,t}(x,u)\sin_{s,t}(x,v)=\sin_{s,t}(\overline{0}_{u,v}x)$,
    \item $\vphiS_{s,t}(x)\phiC_{s,t}(x)-\vphiC_{s,t}(x)\phiS_{s,t}(x)=0$.
\end{enumerate}
\end{corollary}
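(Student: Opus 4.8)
The plan is to specialize the difference formula of Theorem~\ref{theo_add_sincos}. Following the hint in the statement, I would take part~1 of Theorem~\ref{theo_add_sincos},
\[
\sin_{s,t}(x\oplus_{u,v}y)=\sin_{s,t}(x,u)\cos_{s,t}(y,v)+\cos_{s,t}(x,u)\sin_{s,t}(y,v),
\]
and substitute $y=-x$. By Proposition~\ref{prop_sym_trigo} the cosine is even and the sine is odd in its first argument, so $\cos_{s,t}(-x,v)=\cos_{s,t}(x,v)$ and $\sin_{s,t}(-x,v)=-\sin_{s,t}(x,v)$; the right-hand side collapses to $\sin_{s,t}(x,u)\cos_{s,t}(x,v)-\cos_{s,t}(x,u)\sin_{s,t}(x,v)$, which is precisely the left-hand side of part~1 of the corollary. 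The left-hand side becomes $\sin_{s,t}(x\oplus_{u,v}(-x))=\sin_{s,t}(x\ominus_{u,v}x)$ by the definition of $\ominus_{u,v}$ in Definition~\ref{def_lucas_theo}.

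It then remains to recognize $\sin_{s,t}(x\ominus_{u,v}x)$ as $\sin_{s,t}(\overline{0}_{u,v}x)$. I would do this coefficientwise: expanding with Definition~\ref{def_lucas_theo},
\[
(x\ominus_{u,v}x)_{s,t}^{(m)}=\sum_{k=0}^{m}\fibonomial{m}{k}_{s,t}u^{\binom{m-k}{2}}v^{\binom{k}{2}}x^{m-k}(-x)^{k}
=\left(\sum_{k=0}^{m}(-1)^{k}\fibonomial{m}{k}_{s,t}u^{\binom{m-k}{2}}v^{\binom{k}{2}}\right)x^{m}=\overline{0}_{u,v}^{(m)}\,x^{m}.
\]
Substituting $m=2n+1$ into the defining series for $\sin_{s,t}(x\ominus_{u,v}x)$ gives $\sum_{n\ge0}(-1)^{n}\overline{0}_{u,v}^{(2n+1)}x^{2n+1}/\brk[c]{2n+1}_{s,t}!$, which is exactly the series meant by $\sin_{s,t}(\overline{0}_{u,v}x)$; this proves part~1.

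For part~2, I would put $u=\varphi$, $v=\phi$ in part~1. Since $\vphiS_{s,t}(x)=\sin_{s,t}(x,\varphi)$, $\phiC_{s,t}(x)=\cos_{s,t}(x,\phi)$, $\vphiC_{s,t}(x)=\cos_{s,t}(x,\varphi)$ and $\phiS_{s,t}(x)=\sin_{s,t}(x,\phi)$, the left-hand side is $\vphiS_{s,t}(x)\phiC_{s,t}(x)-\vphiC_{s,t}(x)\phiS_{s,t}(x)$ and the right-hand side is $\sin_{s,t}(\overline{0}_{\varphi,\phi}x)$. The key point is that $\overline{0}_{\varphi,\phi}^{(m)}=0$ for every $m\ge1$: combining the coefficient identity $(x\ominus_{u,v}x)^{(m)}=\overline{0}_{u,v}^{(m)}x^{m}$ above with the product representation~(\ref{eqn_vphi_power}) at $u=\varphi$, $v=\phi$ yields $\overline{0}_{\varphi,\phi}^{(m)}x^{m}=\prod_{k=0}^{m-1}(\varphi^{k}-\phi^{k})\,x^{m}$, and the factor with $k=0$ equals $\varphi^{0}-\phi^{0}=0$, annihilating the whole product for $m\ge1$. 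In particular $\overline{0}_{\varphi,\phi}^{(2n+1)}=0$ for all $n\ge0$, so $\sin_{s,t}(\overline{0}_{\varphi,\phi}x)=0$, which gives part~2.

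I do not expect a genuine obstacle here; the only mild subtlety is pinning down that $\sin_{s,t}(\overline{0}_{u,v}x)$ denotes the series obtained by inserting the Lucasnomial zeros $\overline{0}_{u,v}^{(2n+1)}$ into the Lucas-Pantograph sine series (equivalently, $\sin_{s,t}(x\ominus_{u,v}x)$), and checking that the substitution $y=-x$ is compatible with the symbol $\oplus_{u,v}$ via the definition of $\ominus_{u,v}$; after that the argument is routine termwise bookkeeping.
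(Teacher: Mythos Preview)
Your argument is correct and follows exactly the route the paper intends: the corollary carries no separate proof in the paper beyond the hint ``replacing $y=-x$ in Theorem~\ref{theo_add_sincos}'', and you have carried out precisely that substitution, using the parity from Proposition~\ref{prop_sym_trigo} and the coefficientwise identification $(x\ominus_{u,v}x)^{(m)}=\overline{0}_{u,v}^{(m)}x^{m}$. For part~2 you supply the missing detail---vanishing of $\overline{0}_{\varphi,\phi}^{(m)}$ via the $k=0$ factor in the product representation~(\ref{eqn_vphi_power})---which the paper leaves implicit (it is the same mechanism behind $\vphiE_{s,t}(z)\phiE_{s,t}(-z)=1$).
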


\begin{definition}
Let $u_{1},\ldots,u_{k}$ be non-zero complex numbers and set $\mathbf{u}=(u_{1},\ldots,u_{k})$. Define the $\mathbf{u}$-multinomial sine-Lucas function as
\begin{equation}
    \sin_{s,t}(\overline{k}_{\mathbf{u}}x)=\sum_{n=0}^{\infty}(-1)^n\overline{k}_{\mathbf{u},s,t}^{(2n+1)}\frac{x^{2n+1}}{\brk[c]{2n+1}_{s,t}!}.
\end{equation}
Define the $\mathbf{u}$-multinomial cosine-Lucas function as
\begin{equation}
    \cos_{s,t}(\overline{k}_{\mathbf{u}}x)=\sum_{n=0}^{\infty}(-1)^n\overline{k}_{\mathbf{u},s,t}^{(2n)}\frac{x^{2n}}{\brk[c]{2n}_{s,t}!}.
\end{equation}
Additionally, we define
\begin{equation}
    \tan_{s,t}(\overline{k}_{\mathbf{u}}x)=\frac{\sin_{s,t}(\overline{k}_{\mathbf{u}}x)}{\cos_{s,t}(\overline{k}_{\mathbf{u}}x)},\hspace{0.3cm}\cot_{s,t}(\overline{k}_{\mathbf{u}}x)=\frac{\cos_{s,t}(\overline{k}_{\mathbf{u}}x)}{\sin_{s,t}(\overline{k}_{\mathbf{u}}x)}
\end{equation}
and
\begin{equation}
    \sec_{s,t}(\overline{k}_{\mathbf{u}}x)=\frac{1}{\cos_{s,t}(\overline{k}_{\mathbf{u}}x)},\hspace{0.3cm}\csc_{s,t}(\overline{k}_{\mathbf{u}}x)]=\frac{1}{\sin_{s,t}(\overline{k}_{\mathbf{u}}x)}.
\end{equation}
\end{definition}

\begin{theorem}\label{theo_EL_multi}
For all $x\in\R$,
\begin{equation}
    \e_{s,t}(\overline{k}_{\mathbf{u}}ix)=\cos_{s,t}(\overline{k}_{\mathbf{u}}x)+i\sin_{s,t}(\overline{k}_{\mathbf{u}}x).
\end{equation}
Also, we have that
\begin{equation}
    \sin_{s,t}(\overline{k}_{\mathbf{u}}x)=\frac{\e_{s,t}(\overline{k}_{\mathbf{u}}ix)-\e_{s,t}(-\overline{k}_{\mathbf{u}}ix)}{2i}
\end{equation}
and
\begin{equation}
    \cos_{s,t}(\overline{k}_{\mathbf{u}}x)=\frac{\e_{s,t}(\overline{k}_{\mathbf{u}}ix)+\e_{s,t}(-\overline{k}_{\mathbf{u}}ix)}{2}.
\end{equation}
\end{theorem}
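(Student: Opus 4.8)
The plan is to repeat, essentially verbatim, the power–series computation used in the proof of Theorem~\ref{prop_exp_complex_neg}(1), now with the $\mathbf{u}$-deformed Lucasnomial numbers $\overline{k}_{\mathbf{u}}^{(n)}$ in place of the weights $u^{\binom{n}{2}}$. First I would unwind the notation: by the homogeneity property of Theorem~\ref{theo_propi_uvFbinom} the symbol $\overline{k}_{\mathbf{u}}z$ carries $n$-th ``power'' $z^{n}\overline{k}_{\mathbf{u}}^{(n)}$, so that
\begin{equation*}
    \e_{s,t}(\overline{k}_{\mathbf{u}}z)=\sum_{n=0}^{\infty}\overline{k}_{\mathbf{u}}^{(n)}\frac{z^{n}}{\brk[c]{n}_{s,t}!}.
\end{equation*}
Setting $z=ix$ and splitting the series according to the parity of $n$, with $i^{2m}=(-1)^{m}$ and $i^{2m+1}=i(-1)^{m}$, the even-index part collapses to $\sum_{m\ge 0}(-1)^{m}\overline{k}_{\mathbf{u}}^{(2m)}x^{2m}/\brk[c]{2m}_{s,t}!=\cos_{s,t}(\overline{k}_{\mathbf{u}}x)$ and the odd-index part to $i\sum_{m\ge 0}(-1)^{m}\overline{k}_{\mathbf{u}}^{(2m+1)}x^{2m+1}/\brk[c]{2m+1}_{s,t}!=i\sin_{s,t}(\overline{k}_{\mathbf{u}}x)$, which is the first displayed identity.

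For the two inversion formulas I would run the same computation with $z=-ix$; since $(-i)^{2m}=(-1)^{m}$ and $(-i)^{2m+1}=-i(-1)^{m}$, it gives $\e_{s,t}(-\overline{k}_{\mathbf{u}}ix)=\cos_{s,t}(\overline{k}_{\mathbf{u}}x)-i\sin_{s,t}(\overline{k}_{\mathbf{u}}x)$. Adding this to the first identity and dividing by $2$ produces the formula for $\cos_{s,t}(\overline{k}_{\mathbf{u}}x)$; subtracting and dividing by $2i$ produces the one for $\sin_{s,t}(\overline{k}_{\mathbf{u}}x)$.

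I do not expect a genuine obstacle here: the only thing needing care is the bookkeeping of the notation $\overline{k}_{\mathbf{u}}ix$ and $-\overline{k}_{\mathbf{u}}ix$ (confirming that these are exactly the series above with $z=ix$ and $z=-ix$, using the homogeneity recorded in Theorem~\ref{theo_propi_uvFbinom}), after which the proof is just the elementary even/odd splitting of an absolutely convergent series. One could alternatively try to obtain the first identity from Proposition~\ref{prop_prod_nexp} together with Theorem~\ref{prop_exp_complex_neg}, but recovering $\cos_{s,t}(\overline{k}_{\mathbf{u}}x)+i\sin_{s,t}(\overline{k}_{\mathbf{u}}x)$ from the product $\prod_{j=1}^{k}\bigl(\cos_{s,t}(x,u_{j})+i\sin_{s,t}(x,u_{j})\bigr)$ is more roundabout than the direct series manipulation, so I would keep the latter.
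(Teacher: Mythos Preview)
The paper does not actually supply a proof for Theorem~\ref{theo_EL_multi}; it is stated without justification. Your argument is correct and is exactly the natural one: it reproduces, for the $\mathbf{u}$-multinomial objects, the even/odd splitting that the paper uses in its proof of Theorem~\ref{prop_exp_complex_neg}. One small remark: you appeal to Theorem~\ref{theo_propi_uvFbinom} to justify that $\e_{s,t}(\overline{k}_{\mathbf{u}}z)=\sum_{n\ge0}\overline{k}_{\mathbf{u}}^{(n)}z^{n}/\brk[c]{n}_{s,t}!$, but this is simply the \emph{definition} of the $\mathbf{u}$-multinomial exponential given just before Proposition~\ref{prop_prod_nexp}, so no extra homogeneity argument is needed---you may substitute $z=ix$ and $z=-ix$ directly.
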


\begin{theorem}\label{theo_add_sincos_n1}
For every $\mathbf{u}=\overbrace{(u,\ldots,u)}^{n}$, the following are sum and difference identities for the $\mathbf{u}$-multinomial sine-Lucas and cosine-Lucas functions
\begin{enumerate}
    \item $\sin_{s,t}(\overline{n}_{\mathbf{u}}x\oplus_{1,u}y)=\sin_{s,t}(\overline{n}_{\mathbf{u}}x)\cos_{s,t}(y,u)+\cos_{s,t}(\overline{n}_{\mathbf{u}}x)\sin_{s,t}(y,u)$,
    \item  $\sin_{s,t}(\overline{n}_{\mathbf{u}}x\ominus_{1,u}y)=\sin_{s,t}(\overline{n}_{\mathbf{u}}x)\cos_{s,t}(y,u)-\cos_{s,t}(\overline{n}_{\mathbf{u}}x)\sin_{s,t}(y,u)$,
    \item $\cos_{s,t}(\overline{n}_{\mathbf{u}}x\oplus_{1,u}y)=\cos_{s,t}(\overline{n}_{\mathbf{u}}x)\cos_{s,t}(y,u)-\sin_{s,t}(\overline{n}_{\mathbf{u}}x)\sin_{s,t}(y,u)$,
    \item $\cos_{s,t}(\overline{n}_{\mathbf{u}}x\ominus_{1,u}y)=\cos_{s,t}(\overline{n}_{\mathbf{u}}x)\cos_{s,t}(y,u)+\sin_{s,t}(\overline{n}_{\mathbf{u}}x)\sin_{s,t}(y,u)$.
\end{enumerate}
\end{theorem}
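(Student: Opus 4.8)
The plan is to reduce all four identities to a single multinomial Euler-type formula, exactly as the two-variable identities in Theorem \ref{theo_add_sincos} were reduced to Theorem \ref{theo_rep_trigobin}. Write $\mathbf{u}=(u,\ldots,u)$ ($n$ copies) throughout, and interpret $\sin_{s,t}(\overline{n}_{\mathbf{u}}x\oplus_{1,u}y)$ and $\cos_{s,t}(\overline{n}_{\mathbf{u}}x\oplus_{1,u}y)$ through the defining series with $(\overline{n}_{\mathbf{u}}x\oplus_{1,u}y)_{s,t}^{(k)}=\sum_{j=0}^{k}\fibonomial{k}{j}_{s,t}u^{\binom{j}{2}}\overline{n}_{\mathbf{u}}^{(k-j)}x^{k-j}y^{j}$, in the umbral spirit of the preceding definitions. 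First I would establish the analogue of Theorem \ref{theo_rep_trigobin}, namely
\[
\e_{s,t}(\overline{n}_{\mathbf{u}}ix\oplus_{1,u}iy)=\cos_{s,t}(\overline{n}_{\mathbf{u}}x\oplus_{1,u}y)+i\sin_{s,t}(\overline{n}_{\mathbf{u}}x\oplus_{1,u}y),
\]
by using property 5 of Theorem \ref{theo_propi_uvFbinom} to get $(\overline{n}_{\mathbf{u}}ix\oplus_{1,u}iy)^{(k)}=i^{k}(\overline{n}_{\mathbf{u}}x\oplus_{1,u}y)^{(k)}$ and then splitting $\sum_{k}i^{k}(\overline{n}_{\mathbf{u}}x\oplus_{1,u}y)^{(k)}/\brk[c]{k}_{s,t}!$ into its even-index and odd-index parts.

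Next I would factor the left-hand side. Expanding $(\overline{n}_{\mathbf{u}}ix\oplus_{1,u}iy)^{(k)}$ by the $(1,u)$-deformed Lucasnomial theorem, using $\fibonomial{k}{j}_{s,t}/\brk[c]{k}_{s,t}!=1/(\brk[c]{k-j}_{s,t}!\,\brk[c]{j}_{s,t}!)$, and regrouping as a Cauchy product in $k=(k-j)+j$ yields
\[
\e_{s,t}(\overline{n}_{\mathbf{u}}ix\oplus_{1,u}iy)=\left(\sum_{m\geq0}\overline{n}_{\mathbf{u}}^{(m)}\frac{(ix)^{m}}{\brk[c]{m}_{s,t}!}\right)\left(\sum_{j\geq0}u^{\binom{j}{2}}\frac{(iy)^{j}}{\brk[c]{j}_{s,t}!}\right)=\e_{s,t}(\overline{n}_{\mathbf{u}}ix)\,\e_{s,t}(iy,u),
\]
which is the multinomial counterpart of Definition \ref{def_expbin} and is consistent with Proposition \ref{prop_prod_nexp}.

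Then I would invoke the Euler formulas already available: Theorem \ref{theo_EL_multi} gives $\e_{s,t}(\overline{n}_{\mathbf{u}}ix)=\cos_{s,t}(\overline{n}_{\mathbf{u}}x)+i\sin_{s,t}(\overline{n}_{\mathbf{u}}x)$, and Theorem \ref{prop_exp_complex_neg}(1) gives $\e_{s,t}(iy,u)=\cos_{s,t}(y,u)+i\sin_{s,t}(y,u)$. Multiplying these two binomials in $\C$, separating real and imaginary parts, and comparing with the displayed formula from the first step gives identities (1) and (3). Identities (2) and (4) then follow by replacing $y$ with $-y$, noting $\overline{n}_{\mathbf{u}}x\ominus_{1,u}y=\overline{n}_{\mathbf{u}}x\oplus_{1,u}(-y)$, and using the parity relations $\sin_{s,t}(-y,u)=-\sin_{s,t}(y,u)$ and $\cos_{s,t}(-y,u)=\cos_{s,t}(y,u)$ from Proposition \ref{prop_sym_trigo}.

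The main obstacle is bookkeeping rather than conceptual: one must check that the umbral symbol $\overline{n}_{\mathbf{u}}$ sits legitimately in the left slot of $\oplus_{1,u}$ — that is, that property 5 of Theorem \ref{theo_propi_uvFbinom} and the Cauchy-product rearrangement remain valid with $x$ replaced by $\overline{n}_{\mathbf{u}}x$ — and that the even/odd split of the powers $i^{k}$ is matched against the correct $(-1)$ signs in the definitions of $\sin_{s,t}$ and $\cos_{s,t}$. Once the factorization of the second step is in hand, the remaining algebra is identical to the proof of the classical addition formulas.
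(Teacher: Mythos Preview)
Your argument is correct. The paper actually states Theorem~\ref{theo_add_sincos_n1} without proof, evidently regarding it as the direct multinomial extension of Theorem~\ref{theo_add_sincos}, whose one-line proof reads ``Follows from Theorem~\ref{theo_rep_trigobin} and Definition~\ref{def_expbin}.'' Your proposal carries out precisely that extension: you supply the multinomial analogue of Theorem~\ref{theo_rep_trigobin}, factor the exponential via the Cauchy product (the umbral version of Definition~\ref{def_expbin}, consistent with Proposition~\ref{prop_prod_nexp}), apply the two Euler formulas from Theorems~\ref{theo_EL_multi} and~\ref{prop_exp_complex_neg}, and finish by comparing real and imaginary parts and invoking parity. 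This is exactly the route the paper's surrounding structure implies, so there is nothing to contrast---you have simply written out in full what the paper leaves tacit.
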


\begin{theorem}\label{theo_add_sincos_nm}
For every pair $\mathbf{u}=\overbrace{(u,\ldots,u)}^{n}$, $\mathbf{v}=\overbrace{(u,\ldots,u)}^{m}$, the following are sum and difference identities for the $(\mathbf{u},\mathbf{v})$-multinomial sine-Lucas and cosine-Lucas functions
\begin{enumerate}
    \item $\sin_{s,t}(\overline{n}_{\mathbf{u}}x\oplus_{1,1}\overline{m}_{\mathbf{v}}y)=\sin_{s,t}(\overline{n}_{\mathbf{u}}x)\cos_{s,t}(\overline{m}_{\mathbf{v}}y)+\cos_{s,t}(\overline{n}_{\mathbf{u}}x)\sin_{s,t}(\overline{m}_{\mathbf{v}}y)$,
    \item  $\sin_{s,t}(\overline{n}_{\mathbf{u}}x\ominus_{1,1}\overline{m}_{\mathbf{v}}y)=\sin_{s,t}(\overline{n}_{\mathbf{u}}x)\cos_{s,t}(\overline{m}_{\mathbf{v}}y)-\cos_{s,t}(\overline{n}_{\mathbf{u}}x)\sin_{s,t}(\overline{m}_{\mathbf{v}}y)$,
    \item $\cos_{s,t}(\overline{n}_{\mathbf{u}}x\oplus_{1,1}\overline{m}_{\mathbf{v}}y)=\cos_{s,t}(\overline{n}_{\mathbf{u}}x)\cos_{s,t}(\overline{m}_{\mathbf{v}}y)-\sin_{s,t}(\overline{n}_{\mathbf{u}}x)\sin_{s,t}(\overline{m}_{\mathbf{v}}y)$,
    \item $\cos_{s,t}(\overline{n}_{\mathbf{u}}\ominus_{1,1}\overline{m}_{\mathbf{v}}y)=\cos_{s,t}(\overline{n}_{\mathbf{u}}x)\cos_{s,t}(\overline{m}_{\mathbf{v}}y)+\sin_{s,t}(\overline{n}_{\mathbf{u}}x)\sin_{s,t}(\overline{m}_{\mathbf{v}}y)$.
\end{enumerate}
\end{theorem}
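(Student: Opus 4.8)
The plan is to reproduce the argument used for Theorem \ref{theo_add_sincos}, with the two–variable Euler identity of Theorem \ref{theo_rep_trigobin} replaced by the multinomial Euler identity of Theorem \ref{theo_EL_multi}. The first step is to record that the exponential–Lucas function of the combined object factors, namely
\[
\e_{s,t}(\overline{n}_{\mathbf{u}}x\oplus_{1,1}\overline{m}_{\mathbf{v}}y)=\e_{s,t}(\overline{n}_{\mathbf{u}}x)\,\e_{s,t}(\overline{m}_{\mathbf{v}}y).
\]
This follows by expanding the $(1,1)$-deformed Lucasnomial convolution exactly as in Definition \ref{def_expbin} (the outer deformation parameters being $1$, no extra weights $u^{\binom{k}{2}}$, $v^{\binom{k}{2}}$ intervene), together with Proposition \ref{prop_prod_nexp}, which identifies $\e_{s,t}(\overline{n}_{\mathbf{u}}z)=\prod_{k=1}^{n}\e_{s,t}(z,u_{k})$ and likewise for $\overline{m}_{\mathbf{v}}$. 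One also notes that the degree-$N$ term of $\overline{n}_{\mathbf{u}}x\oplus_{1,1}\overline{m}_{\mathbf{v}}y$ is homogeneous of degree $N$ jointly in $(x,y)$, so that the substitution $x\mapsto ix$, $y\mapsto iy$ multiplies it by $i^{N}$.

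Next I would substitute $x\mapsto ix$ and $y\mapsto iy$. By the homogeneity just noted and the definitions of the $\mathbf{u}$-multinomial sine–Lucas and cosine–Lucas functions (which pick out, respectively, the odd- and even-degree parts of the corresponding exponential–Lucas series), the left-hand side becomes
\[
\e_{s,t}(\overline{n}_{\mathbf{u}}(ix)\oplus_{1,1}\overline{m}_{\mathbf{v}}(iy))=\cos_{s,t}(\overline{n}_{\mathbf{u}}x\oplus_{1,1}\overline{m}_{\mathbf{v}}y)+i\sin_{s,t}(\overline{n}_{\mathbf{u}}x\oplus_{1,1}\overline{m}_{\mathbf{v}}y).
\]
On the right-hand side, Theorem \ref{theo_EL_multi} applied to each factor gives
\[
\e_{s,t}(\overline{n}_{\mathbf{u}}(ix))\,\e_{s,t}(\overline{m}_{\mathbf{v}}(iy))=\bigl(\cos_{s,t}(\overline{n}_{\mathbf{u}}x)+i\sin_{s,t}(\overline{n}_{\mathbf{u}}x)\bigr)\bigl(\cos_{s,t}(\overline{m}_{\mathbf{v}}y)+i\sin_{s,t}(\overline{m}_{\mathbf{v}}y)\bigr).
\]

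Expanding the product on the right and separating real and imaginary parts (all the multinomial sine–Lucas and cosine–Lucas functions have real Taylor coefficients, hence are real for real $x,y$) yields identities 1 and 3 upon equating imaginary and real parts respectively. The difference formulas 2 and 4 follow by replacing $y$ with $-y$: since the multinomial cosine–Lucas function is even and the multinomial sine–Lucas function is odd (the analogue of Proposition \ref{prop_sym_trigo}, immediate from the series), $\cos_{s,t}(\overline{m}_{\mathbf{v}}(-y))=\cos_{s,t}(\overline{m}_{\mathbf{v}}y)$ and $\sin_{s,t}(\overline{m}_{\mathbf{v}}(-y))=-\sin_{s,t}(\overline{m}_{\mathbf{v}}y)$, while the operation $\ominus_{1,1}$ is precisely this sign change in the second argument. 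In the special case $m=1$ this recovers Theorem \ref{theo_add_sincos_n1}.

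The step I expect to require the most care is the first one: pinning down the formal framework in which $\overline{n}_{\mathbf{u}}x\oplus_{1,1}\overline{m}_{\mathbf{v}}y$ is a legitimate argument for $\e_{s,t}$, $\sin_{s,t}$ and $\cos_{s,t}$, and verifying both that the $\oplus_{1,1}$-convolution collapses to the product $\e_{s,t}(\overline{n}_{\mathbf{u}}x)\e_{s,t}(\overline{m}_{\mathbf{v}}y)$ and that this combined object behaves homogeneously, so that the even/odd split producing $\cos_{s,t}$ and $\sin_{s,t}$ is valid. Once this bookkeeping is settled, the remainder is the same short computation as in the proof of Theorem \ref{theo_add_sincos}.
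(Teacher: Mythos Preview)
The paper states Theorem \ref{theo_add_sincos_nm} without proof, so there is no argument to compare against directly. Your approach is exactly the natural extension of the paper's proof of Theorem \ref{theo_add_sincos} (``Follows from Theorem \ref{theo_rep_trigobin} and Definition \ref{def_expbin}''): you replace the single-variable Euler formula by the multinomial one (Theorem \ref{theo_EL_multi}), establish the factorization $\e_{s,t}(\overline{n}_{\mathbf{u}}x\oplus_{1,1}\overline{m}_{\mathbf{v}}y)=\e_{s,t}(\overline{n}_{\mathbf{u}}x)\,\e_{s,t}(\overline{m}_{\mathbf{v}}y)$ via the $(1,1)$-convolution, and then split into even and odd parts. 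This is sound and is presumably what the author had in mind.

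One small caution: the phrase ``separating real and imaginary parts (all the multinomial sine--Lucas and cosine--Lucas functions have real Taylor coefficients)'' tacitly assumes $s,t,u\in\R$, whereas the paper works over $\C$. The clean fix is already implicit in your write-up: instead of invoking real/imaginary parts, also perform the substitution $x\mapsto -ix$, $y\mapsto -iy$ and take half-sum and half-difference, exactly as in Theorem \ref{theo_rep_trigobin}. This yields the identities as equalities of formal power series without any reality assumption. Your own remark that the bookkeeping around the meaning of $\overline{n}_{\mathbf{u}}x\oplus_{1,1}\overline{m}_{\mathbf{v}}y$ is the only delicate point is accurate; once one reads $(\overline{n}_{\mathbf{u}}x\oplus_{1,1}\overline{m}_{\mathbf{v}}y)^{(N)}=\sum_{k}\fibonomial{N}{k}_{s,t}\overline{n}_{\mathbf{u}}^{(N-k)}\overline{m}_{\mathbf{v}}^{(k)}x^{N-k}y^{k}$ (in line with the paper's inductive definition $\overline{m+1}^{(n)}=(\overline{m}\oplus_{1,u_{m+1}}1)^{(n)}$), both the factorization and the homogeneity are immediate.
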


\subsection{Pythagorean identities}

\begin{corollary}\label{coro_pytha}
For all $u,v\in\C$ and replacing $y=-x$ in Theorem \ref{theo_add_sincos}, then the Lucas-Pantograph-Pythagorean identities are
\begin{enumerate}
    \item $\sin_{s,t}(x,u)\sin_{s,t}(x,v)+\cos_{s,t}(x,u)\cos_{s,t}(x,v)=\cos_{s,t}(\overline{0}_{u,v}x)$,
    \item $\sin_{s,t}^{2}(x,u)+\cos_{s,t}^{2}(x,u)=\cos_{s,t}(\overline{0}_{u,u}x)$.
    \item $\vphiS_{s,t}(x)\phiS_{s,t}(x)+\vphiC_{s,t}(x)\phiC_{s,t}(x)=1$.
\end{enumerate}
\end{corollary}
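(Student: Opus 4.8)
The plan is to specialize the cosine difference formula of Theorem \ref{theo_add_sincos} and then recognise the resulting $(u,v)$-binomial cosine as an honest power series in $x$. Concretely, I would take part 3 of Theorem \ref{theo_add_sincos}, namely $\cos_{s,t}(x\oplus_{u,v}y)=\cos_{s,t}(x,u)\cos_{s,t}(y,v)-\sin_{s,t}(x,u)\sin_{s,t}(y,v)$, put $y=-x$, and invoke the parity relations $\cos_{s,t}(-x,v)=\cos_{s,t}(x,v)$ and $\sin_{s,t}(-x,v)=-\sin_{s,t}(x,v)$ from Proposition \ref{prop_sym_trigo}; equivalently one may just use part 4 with $y=x$. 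This produces $\cos_{s,t}(x\ominus_{u,v}x)=\cos_{s,t}(x,u)\cos_{s,t}(x,v)+\sin_{s,t}(x,u)\sin_{s,t}(x,v)$, which is already the right-hand side of part 1, so what remains is to identify $\cos_{s,t}(x\ominus_{u,v}x)$ with $\cos_{s,t}(\overline{0}_{u,v}x)$.

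For that I would evaluate the Lucasnomial power $(x\ominus_{u,v}x)_{s,t}^{(n)}=(x\oplus_{u,v}(-x))_{s,t}^{(n)}$ directly from Definition \ref{def_lucas_theo}. Since $x^{n-k}(-x)^{k}=(-1)^{k}x^{n}$ no longer depends on the summation structure, the factor $x^{n}$ pulls out of the sum and one is left with $(x\ominus_{u,v}x)_{s,t}^{(n)}=\overline{0}_{u,v}^{(n)}x^{n}$, where $\overline{0}_{u,v}^{(n)}=\sum_{k=0}^{n}(-1)^{k}\fibonomial{n}{k}_{s,t}u^{\binom{n-k}{2}}v^{\binom{k}{2}}$ is the $(u,v)$-deformed Lucasnomial zero. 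Substituting this into the defining series of the $(u,v)$-binomial cosine and comparing term by term with the series for $\cos_{s,t}(\overline{0}_{u,v}x)$ (the $\mathbf{u}$-multinomial cosine specialised to $\overline{0}_{u,v}$) yields the asserted identity, hence part 1; part 2 is the special case $v=u$.

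Part 3 is the case $u=\varphi$, $v=\phi$, so by part 1 it suffices to prove $\cos_{s,t}(\overline{0}_{\varphi,\phi}x)=1$, i.e. $\overline{0}_{\varphi,\phi}^{(n)}=0$ for every $n\geq1$ while $\overline{0}_{\varphi,\phi}^{(0)}=1$. Here I would appeal to the product representation Eq.(\ref{eqn_vphi_power}): for $n\geq1$ one has $\overline{0}_{\varphi,\phi}^{(n)}=(1\ominus 1)_{\varphi,\phi}^{(n)}=\prod_{k=0}^{n-1}(\varphi^{k}-\phi^{k})$, and the $k=0$ factor equals $\varphi^{0}-\phi^{0}=0$, so the whole product — hence every coefficient $\overline{0}_{\varphi,\phi}^{(2n)}$ with $n\geq1$ — vanishes; only the $n=0$ term of the cosine series survives, giving the value $1$.

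None of these steps is genuinely difficult; the one point demanding care is lining up the signs, so that the $+$ in the difference formula (part 4, or part 3 after applying parity to $y=-x$) matches the $(-1)^{k}$ inside $\overline{0}_{u,v}^{(n)}$, and — for part 3 — observing that the collapse to the constant $1$ is forced entirely by the vanishing $k=0$ factor in the product form of $(1\ominus 1)_{\varphi,\phi}^{(n)}$. There is no analytic obstacle: every series in sight is the defining power series, so all the manipulations are purely formal.
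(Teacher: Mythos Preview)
Your proposal is correct and follows exactly the approach the paper intends: the corollary is stated without a separate proof, the entire argument being the substitution $y=-x$ (equivalently $y=x$ in part~4) in Theorem~\ref{theo_add_sincos}, together with the identification $(x\ominus_{u,v}x)_{s,t}^{(n)}=\overline{0}_{u,v}^{(n)}x^{n}$. Your treatment of part~3 via the product formula Eq.~(\ref{eqn_vphi_power}) is a clean way to see $\overline{0}_{\varphi,\phi}^{(n)}=0$ for $n\geq1$; the paper could equally cite the already-stated identity $\vphiE_{s,t}(z)\phiE_{s,t}(-z)=1$, which encodes the same vanishing.
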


Now, define the functions
\begin{align}\label{eqn_trigo_wt1}
    \sinwt_{s,t}(x,u)&=\frac{\sin_{s,t}(x,u)}{\sqrt{\cos_{s,t}(\overline{0}_{u,u}x)}},\hspace{0.5cm}\coswt_{s,t}(x,u)=\frac{\cos_{s,t}(x,u)}{\sqrt{\cos_{s,t}(\overline{0}_{u,u}x)}},
\end{align}
\begin{equation}\label{eqn_trigo_wt2}
    \tanwt_{s,t}(x,u)=\tan_{s,t}(x,u),\hspace{0.5cm}\cotwt_{s,t}(x,u)=\cot_{s,t}(x,u),
\end{equation}
and
\begin{align}\label{eqn_trigo_wt3}
    \secwt_{s,t}(x,u)=\frac{\sqrt{\cos_{s,t}(\overline{0}_{u,u}x)}}{\cos_{s,t}(x,u)},\hspace{0.5cm}\cscwt_{s,t}(x,u)=\frac{\sqrt{\cos_{s,t}(\overline{0}_{u,u}x)}}{\sin_{s,t}(x,u)}.
\end{align}
Therefore, from Corollary \ref{coro_pytha} and Eqs. (\ref{eqn_trigo_wt1}), (\ref{eqn_trigo_wt2}), and (\ref{eqn_trigo_wt3})
\begin{align}
    \sinwt_{s,t}(x,u)^{2}+\coswt_{s,t}(x,u)^{2}&=1,\label{eqn_pytha1}\\
    \tanwt_{s,t}^2(x,u)+1&=\secwt_{s,t}^2(x,u),\label{eqn_pytha2}\\
    1+\cotwt_{s,t}^2(x,u)&=\cscwt_{s,t}^2(x,u).\label{eqn_pytha3}
\end{align}

\subsection{Double-angle identities}

\begin{corollary}\label{coro_double-angle}
For all $u,v\in\C$ and replacing $y=x$ in Theorems \ref{theo_add_sincos} and Theorem \ref{theo_add_tan}, then the Lucas-Pantograph type double-angle formulae are
\begin{enumerate}
    \item $\sin_{s,t}(\overline{2}_{u,v}x)=\sin_{s,t}(x,u)\cos_{s,t}(x,v)+\cos_{s,t}(x,u)\sin_{s,t}(x,v)$.
    \item $\sin_{s,t}(\overline{2}_{u,u}x)=2\sin_{s,t}(x,u)\cos_{s,t}(x,u)$.
    \item $\cos_{s,t}(\overline{2}_{u,v}x)=\cos_{s,t}(x,u)\cos_{s,t}(x,v)-\sin_{s,t}(x,u)\sin_{s,t}(x,v)$.
    \item $\cos_{s,t}(\overline{2}_{u,u}x)=\cos_{s,t}^{2}(x,u)-\sin_{s,t}^{2}(x,u)$.
    \item $\tan_{s,t}(\overline{2}_{u,v}x)=\frac{\tan_{s,t}(x,u)+\tan_{s,t}(x,v)}{1-\tan_{s,t}(x,u)\tan_{s,t}(x,v)}$.
    \item $\tan_{s,t}(\overline{2}_{u,u}x)=\frac{2\tan_{s,t}(x,u)}{1-\tan_{s,t}^2(x,u)}$.
\end{enumerate}
\end{corollary}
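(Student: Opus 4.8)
The plan is to specialize the sum formulas already proved; the only point that requires genuine care is recognizing that putting $y=x$ in the $(u,v)$-binomial trigonometric functions produces exactly the $\overline{2}_{u,v}$-deformed functions. I would begin by recording the homogeneity identity
\[
(x\oplus_{u,v}x)_{s,t}^{(n)}=x^{n}\,\overline{2}_{(u,v)}^{(n)},
\]
which follows immediately from property 5 of Theorem \ref{theo_propi_uvFbinom} (taking $z=x$ and $x=y=1$ there) together with the definition $\overline{2}_{(u,v)}^{(n)}=(1\oplus_{u,v}1)_{s,t}^{(n)}$.

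Substituting this into the defining series of the $(u,v)$-binomial trigonometric functions gives at once
\[
\sin_{s,t}(x\oplus_{u,v}x)=\sin_{s,t}(\overline{2}_{u,v}x),\qquad
\cos_{s,t}(x\oplus_{u,v}x)=\cos_{s,t}(\overline{2}_{u,v}x),
\]
and hence the corresponding equality for $\tan_{s,t}$. With this in hand, setting $y=x$ in items 1 and 3 of Theorem \ref{theo_add_sincos} yields statements 1 and 3 of the corollary, and setting $y=x$ in item 1 of Theorem \ref{theo_add_tan} yields statement 5.

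Finally I would collapse the right-hand sides by putting $v=u$: in statement 1 the two summands coincide, producing $2\sin_{s,t}(x,u)\cos_{s,t}(x,u)$, which is statement 2; in statement 3 one obtains $\cos_{s,t}^{2}(x,u)-\sin_{s,t}^{2}(x,u)$, which is statement 4; and statement 5 with $v=u$ becomes statement 6. The main — and essentially the only — obstacle is the bookkeeping identity linking $x\oplus_{u,v}x$ to $\overline{2}_{u,v}x$; once that is established, the rest is direct substitution into results proved earlier in the paper, so no further computation is needed.
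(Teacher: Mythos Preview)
Your proposal is correct and is precisely the argument the paper intends: the corollary is stated without a separate proof, the instruction ``replacing $y=x$'' being the whole of it, and you have made explicit the one link the paper leaves tacit, namely that $(x\oplus_{u,v}x)_{s,t}^{(n)}=x^{n}\,\overline{2}_{(u,v)}^{(n)}$ via property~5 of Theorem~\ref{theo_propi_uvFbinom}, so that the binomial trigonometric functions at $y=x$ coincide with the $\mathbf{u}$-multinomial ones. Nothing further is required.
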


\subsection{Special values in $x=\pi_{u}$}

\begin{theorem}
Suppose that $x=\pi_{u}$ such that $\sin_{s,t}(\pi_{u},u)=0$ and $\cos_{s,t}(\overline{0}_{u,u}\pi_{u})\neq0$. Then
    \begin{enumerate}
        \item $\sin_{s,t}(\overline{n}_{u}\pi_{u})=0$.
        \item $\cos_{s,t}(\overline{n}_{u}\pi_{u})=\pm\cos_{s,t}^{n/2}(\overline{0}_{u,u}\pi_{u})$, $n\geq1$.
        \item $\tan_{s,t}(\overline{n}_{u}\pi_{u})=0$.
    \end{enumerate}
\end{theorem}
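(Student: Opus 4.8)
The plan is to prove the three assertions simultaneously by induction on $n\ge 1$, using the one--parameter multinomial addition formulas of Theorem~\ref{theo_add_sincos_n1} as a recursion in $n$. Writing $\mathbf{u}=(u,\dots,u)$, the recursive definition $\overline{n+1}_{(u_{1},\dots,u_{n+1})}^{(m)}=(\overline{n}_{(u_{1},\dots,u_{n})}\oplus_{1,u_{n+1}}1)^{(m)}$ together with the homogeneity $z^{m}(a\oplus_{u,v}b)^{(m)}=(za\oplus_{u,v}zb)^{(m)}$ from Theorem~\ref{theo_propi_uvFbinom} yields the identification $(\overline{n}_{\mathbf{u}}z\oplus_{1,u}z)^{(m)}=z^{m}\,\overline{n+1}_{\mathbf{u}}^{(m)}$, so that setting $y=x=z$ in Theorem~\ref{theo_add_sincos_n1} produces the recursions
\begin{equation*}
\sin_{s,t}(\overline{n+1}_{\mathbf{u}}z)=\sin_{s,t}(\overline{n}_{\mathbf{u}}z)\cos_{s,t}(z,u)+\cos_{s,t}(\overline{n}_{\mathbf{u}}z)\sin_{s,t}(z,u)
\end{equation*}
and $\cos_{s,t}(\overline{n+1}_{\mathbf{u}}z)=\cos_{s,t}(\overline{n}_{\mathbf{u}}z)\cos_{s,t}(z,u)-\sin_{s,t}(\overline{n}_{\mathbf{u}}z)\sin_{s,t}(z,u)$.

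For the base case $n=1$, Proposition~\ref{prop_prod_nexp} gives $\e_{s,t}(\overline{1}_{u}z)=\e_{s,t}(z,u)$, hence $\sin_{s,t}(\overline{1}_{u}z)=\sin_{s,t}(z,u)$ and $\cos_{s,t}(\overline{1}_{u}z)=\cos_{s,t}(z,u)$; evaluating at $z=\pi_{u}$ and using $\sin_{s,t}(\pi_{u},u)=0$ together with the Pythagorean identity $\sin_{s,t}^{2}(\pi_{u},u)+\cos_{s,t}^{2}(\pi_{u},u)=\cos_{s,t}(\overline{0}_{u,u}\pi_{u})$ of Corollary~\ref{coro_pytha} one obtains $\sin_{s,t}(\overline{1}_{u}\pi_{u})=0$ and $\cos_{s,t}^{2}(\overline{1}_{u}\pi_{u})=\cos_{s,t}(\overline{0}_{u,u}\pi_{u})$, i.e.\ $\cos_{s,t}(\overline{1}_{u}\pi_{u})=\pm\cos_{s,t}^{1/2}(\overline{0}_{u,u}\pi_{u})$. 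For the inductive step, put $z=\pi_{u}$ in the two recursions; since $\sin_{s,t}(\pi_{u},u)=0$ they collapse to $\sin_{s,t}(\overline{n+1}_{u}\pi_{u})=\sin_{s,t}(\overline{n}_{u}\pi_{u})\cos_{s,t}(\pi_{u},u)$ and $\cos_{s,t}(\overline{n+1}_{u}\pi_{u})=\cos_{s,t}(\overline{n}_{u}\pi_{u})\cos_{s,t}(\pi_{u},u)$. The inductive hypothesis $\sin_{s,t}(\overline{n}_{u}\pi_{u})=0$ immediately gives assertion~1 for $n+1$; iterating the cosine relation from the base case gives $\cos_{s,t}(\overline{n}_{u}\pi_{u})=\cos_{s,t}(\pi_{u},u)^{n}$, which by $\cos_{s,t}^{2}(\pi_{u},u)=\cos_{s,t}(\overline{0}_{u,u}\pi_{u})$ equals $\pm\cos_{s,t}^{n/2}(\overline{0}_{u,u}\pi_{u})$, proving assertion~2. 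Assertion~3 is then immediate: $\tan_{s,t}(\overline{n}_{u}\pi_{u})=\sin_{s,t}(\overline{n}_{u}\pi_{u})/\cos_{s,t}(\overline{n}_{u}\pi_{u})=0$, the denominator being nonzero because $\cos_{s,t}(\overline{0}_{u,u}\pi_{u})\ne 0$ by hypothesis.

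The only step that needs care is the identification $(\overline{n}_{\mathbf{u}}z\oplus_{1,u}z)^{(m)}=z^{m}\overline{n+1}_{\mathbf{u}}^{(m)}$, which is what turns Theorem~\ref{theo_add_sincos_n1} into a genuine recursion on the diagonal multinomial trigonometric functions; everything after that is a short induction. A more direct route bypasses the addition formulas altogether: by Proposition~\ref{prop_prod_nexp} and Theorem~\ref{prop_exp_complex_neg}, $\e_{s,t}(\overline{n}_{u}iz)=\e_{s,t}(iz,u)^{n}=(\cos_{s,t}(z,u)+i\sin_{s,t}(z,u))^{n}$; combining this with its image under $z\mapsto -z$ (via parity, Proposition~\ref{prop_sym_trigo}) and the multinomial Euler formula of Theorem~\ref{theo_EL_multi} expresses $\cos_{s,t}(\overline{n}_{u}z)$ as $\tfrac{1}{2}[(\cos_{s,t}(z,u)+i\sin_{s,t}(z,u))^{n}+(\cos_{s,t}(z,u)-i\sin_{s,t}(z,u))^{n}]$, and similarly for $\sin_{s,t}(\overline{n}_{u}z)$; at $z=\pi_{u}$, where $\sin_{s,t}(\pi_{u},u)=0$, these reduce at once to $\cos_{s,t}(\pi_{u},u)^{n}$ and $0$, after which Corollary~\ref{coro_pytha} finishes assertion~2 as before.
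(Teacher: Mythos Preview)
Your main inductive argument is essentially the paper's own proof: induct on $n$ via Theorem~\ref{theo_add_sincos_n1}, use the Pythagorean identity of Corollary~\ref{coro_pytha} to express $\cos_{s,t}(\pi_{u},u)$ as $\pm\cos_{s,t}^{1/2}(\overline{0}_{u,u}\pi_{u})$, and read off statement~3 from~1 and~2. The only cosmetic differences are that the paper anchors the induction at $n=2$ using the double-angle Corollary~\ref{coro_double-angle} rather than at $n=1$, and that you make the identification $(\overline{n}_{\mathbf{u}}z\oplus_{1,u}z)^{(m)}=z^{m}\,\overline{n+1}_{\mathbf{u}}^{(m)}$ explicit where the paper leaves it implicit.

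Your alternative ``De Moivre'' route, however, is not in the paper and is genuinely cleaner: combining Proposition~\ref{prop_prod_nexp} with Theorems~\ref{prop_exp_complex_neg} and~\ref{theo_EL_multi} gives $\cos_{s,t}(\overline{n}_{u}z)+i\sin_{s,t}(\overline{n}_{u}z)=(\cos_{s,t}(z,u)+i\sin_{s,t}(z,u))^{n}$ directly, which at $z=\pi_{u}$ collapses to $\cos_{s,t}(\pi_{u},u)^{n}$ without any induction or appeal to the addition formulas. This bypasses Theorem~\ref{theo_add_sincos_n1} entirely and makes the dependence on the Pythagorean identity the only nontrivial input; the paper's inductive version, by contrast, is more in keeping with how the surrounding material is developed but is logically heavier.
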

\begin{proof}
The proof is by induction on $n$. Using Corollary \ref{coro_double-angle}
\begin{align*}
    \sin_{s,t}(\overline{2}_{u,u}\pi_{u})&=2\sin_{s,t}(\pi_{u},u)\cos_{s,t}(\pi_{u},u)=0.
\end{align*}
Suppose that $\sin_{s,t}(\overline{n}_{\mathbf{u}}\pi_{u})=0$, with $\mathbf{u}=(u,\ldots,u)$. Then from the Theorem \ref{theo_add_sincos_n1},
\begin{align*}
    \sin_{s,t}(\overline{n+1}_{\mathbf{u},u}\pi_{u})&=\sin_{s,t}((\overline{n}_{\mathbf{u}}\oplus_{1,u}1)\pi_{u})\\
    &=\sin_{s,t}(\overline{n}_{\mathbf{u}}\pi_{u})\cos_{s,t}(\pi_{u},u)+\cos_{s,t}(\overline{n}_{\mathbf{u}}\pi_{u})\sin_{s,t}(\pi_{u},u)\\
    &=0.
\end{align*}
This proves statement 1. From Eqs. (\ref{eqn_pytha1}) and (\ref{eqn_trigo_wt1}), it is clear that
\begin{equation}
    \cos_{s,t}(\pi_{u},u)=\pm\sqrt{\cos_{s,t}(\overline{0}_{u,u}\pi_{u})}\neq0.
\end{equation}
From Corollary \ref{coro_double-angle},
\begin{equation}
    \cos_{s,t}(\overline{2}_{u,u}\pi_{u})=\cos_{s,t}^2(\pi_{u},u)=\cos_{s,t}(\overline{0}_{u,u}\pi_{u}).
\end{equation}
By using the Theorem \ref{theo_add_sincos_n1}, we have that
\begin{align*}
    \cos_{s,t}(\overline{n+1}_{\mathbf{u},u}\pi_{u})&=\cos_{s,t}(\overline{n}_{\mathbf{u}}\pi_{u})\cos_{s,t}(\pi_{u},u)-\sin_{s,t}(\overline{n}_{\mathbf{u}}\pi_{u})\sin_{s,t}(\pi_{u},u)\\
    &=\pm\cos_{s,t}^{n/2}(\overline{0}_{u,u}\pi_{u})\sqrt{\cos_{s,t}(\overline{0}_{u,u}\pi_{u})}\\
    &=\pm\cos_{s,t}^{(n+1)/2}(\overline{0}_{u,u}\pi_{u}).
\end{align*}
This proves statement 2. Finally,
\begin{equation}
    \tan_{s,t}(\overline{n}_{\mathbf{u}}\pi_{u})=\frac{\sin_{s,t}(\overline{n}_{\mathbf{u}}\pi_{u})}{\cos_{s,t}(\overline{n}_{\mathbf{u}}\pi_{u})}=0.
\end{equation}
Thus the statement 3 is proved.
\end{proof}


\subsection{Periodic functions}

\begin{corollary}
    \begin{enumerate}
        \item $\sin_{s,t}((\overline{n}_{\mathbf{u}}\pi_{u}\oplus_{1,v}x))=\cos_{s,t}^{n/2}(\overline{0}_{u,u}\pi_{u})\sin_{s,t}(x,v)$.
        \item $\cos_{s,t}((\overline{n}_{\mathbf{u}}\pi_{u}\oplus_{1,v}x))=\cos_{s,t}^{n/2}(\overline{0}_{u,u}\pi_{u})\cos_{s,t}(x,v)$.
        \item $\tan_{s,t}((\overline{n}_{\mathbf{u}}\pi_{u}\oplus_{1,v}x))=\tan_{s,t}(x,v)$.
        \item $\cot_{s,t}((\overline{n}_{\mathbf{u}}\pi_{u}\oplus_{1,v}x))=\cot_{s,t}(x,v)$.
    \end{enumerate}
\end{corollary}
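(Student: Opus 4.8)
The strategy is to obtain every one of the four identities by substituting the special values computed in the immediately preceding (special‑value) theorem into an addition formula, so the only preliminary step is to record the right addition formula. Although Theorem~\ref{theo_add_sincos_n1} is stated with the parameter $u$ in the inner slot, its proof uses nothing about that slot except that it is a Pantograph parameter; hence the same argument — expand $\e_{s,t}(\overline{n}_{\mathbf{u}}ix\oplus_{1,v}iy)=\e_{s,t}(\overline{n}_{\mathbf{u}}ix)\,\e_{s,t}(iy,v)$ (Definition~\ref{def_expbin} with a multinomial argument, together with Proposition~\ref{prop_prod_nexp}) using the Euler‑type formulas of Theorem~\ref{theo_EL_multi} and Theorem~\ref{prop_exp_complex_neg}, and separate real and imaginary parts — yields, for real $x,y$, any $v\in\C$, and $\mathbf{u}=(u,\dots,u)$ with $n$ entries,
\begin{align*}
    \sin_{s,t}(\overline{n}_{\mathbf{u}}x\oplus_{1,v}y)&=\sin_{s,t}(\overline{n}_{\mathbf{u}}x)\cos_{s,t}(y,v)+\cos_{s,t}(\overline{n}_{\mathbf{u}}x)\sin_{s,t}(y,v),\\
    \cos_{s,t}(\overline{n}_{\mathbf{u}}x\oplus_{1,v}y)&=\cos_{s,t}(\overline{n}_{\mathbf{u}}x)\cos_{s,t}(y,v)-\sin_{s,t}(\overline{n}_{\mathbf{u}}x)\sin_{s,t}(y,v).
\end{align*}

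With these in hand I would put $\pi_u$ in the multinomial slot and keep $x$ (the variable of the corollary) in the inner slot with parameter $v$. Under the standing hypotheses $\sin_{s,t}(\pi_u,u)=0$ and $\cos_{s,t}(\overline{0}_{u,u}\pi_u)\neq0$, the preceding theorem gives $\sin_{s,t}(\overline{n}_{\mathbf{u}}\pi_u)=0$ and $\cos_{s,t}(\overline{n}_{\mathbf{u}}\pi_u)=\pm\cos_{s,t}^{n/2}(\overline{0}_{u,u}\pi_u)$ for $n\geq1$. Substituting these into the two displayed formulas annihilates the terms containing $\sin_{s,t}(\overline{n}_{\mathbf{u}}\pi_u)$ and replaces $\cos_{s,t}(\overline{n}_{\mathbf{u}}\pi_u)$ by $\pm\cos_{s,t}^{n/2}(\overline{0}_{u,u}\pi_u)$, which is precisely statements~1 and~2 (the sign being the one fixed in that theorem).

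Statements~3 and~4 then follow immediately by dividing: by the definition of the $(1,v)$‑binomial tangent and cotangent,
\begin{equation*}
    \tan_{s,t}(\overline{n}_{\mathbf{u}}\pi_u\oplus_{1,v}x)=\frac{\sin_{s,t}(\overline{n}_{\mathbf{u}}\pi_u\oplus_{1,v}x)}{\cos_{s,t}(\overline{n}_{\mathbf{u}}\pi_u\oplus_{1,v}x)},
\end{equation*}
and the common factor $\pm\cos_{s,t}^{n/2}(\overline{0}_{u,u}\pi_u)$ — nonzero because $\cos_{s,t}(\overline{0}_{u,u}\pi_u)\neq0$ — cancels, leaving $\tan_{s,t}(x,v)$; the cotangent case is identical. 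I expect the only genuine obstacle to lie in the first paragraph: checking carefully that the Euler‑type derivation underlying Theorems~\ref{theo_EL_multi} and~\ref{theo_add_sincos_n1} really does tolerate two \emph{different} deformation parameters in the two slots, and keeping the branch of the square root (the sign $\pm$) consistent; once that is settled, the corollary is pure substitution.
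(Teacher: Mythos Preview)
The paper gives no proof for this corollary, so there is nothing to compare your argument against; your approach is the natural one and is essentially what the word ``corollary'' already signals: feed the special values $\sin_{s,t}(\overline{n}_{\mathbf{u}}\pi_u)=0$ and $\cos_{s,t}(\overline{n}_{\mathbf{u}}\pi_u)=\pm\cos_{s,t}^{n/2}(\overline{0}_{u,u}\pi_u)$ from the preceding theorem into an addition formula, then divide for items 3 and 4.

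Your one substantive observation --- that Theorem~\ref{theo_add_sincos_n1} is stated with the same parameter $u$ in both slots while the corollary needs a possibly different $v$ in the inner slot --- is well taken, and your fix is the right one: the identity $\e_{s,t}(\overline{n}_{\mathbf{u}}ix\oplus_{1,v}iy)=\e_{s,t}(\overline{n}_{\mathbf{u}}ix)\,\e_{s,t}(iy,v)$ follows from the Cauchy product exactly as in Definition~\ref{def_expbin} and Proposition~\ref{prop_prod_nexp}, and separating parts via Theorems~\ref{theo_EL_multi} and~\ref{prop_exp_complex_neg} gives the two-parameter addition formulas you display. The sign issue you raise is real and is not an error on your part: the preceding theorem only determines $\cos_{s,t}(\overline{n}_{\mathbf{u}}\pi_u)$ up to $\pm$, so items 1 and 2 of the corollary as printed in the paper should carry that same sign ambiguity; items 3 and 4 are unaffected since the factor cancels.
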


\section{Hyperbolic-Lucas functions}

\begin{definition}
For all $s,t,u\in\C$, $s\neq0,t\neq0$, the Lucas-Pantograph type hyperbolic functions are defined as
\begin{align*}
    \sinh_{s,t}(z,u)&=
    \begin{cases}
        \sum_{n=0}^{\infty}u^{\binom{2n+1}{2}}\frac{z^{2n+1}}{\brk[c]{2n+1}_{s,t}!},&\text{ if }u\neq0;\\
        z,&\text{ if }u=0,
    \end{cases}\\
    \cosh_{s,t}(z,u)&=
    \begin{cases}
        \sum_{n=0}^{\infty}u^{\binom{2n}{2}}\frac{z^{2n}}{\brk[c]{2n}_{s,t}!},&\text{ if }u\neq0;\\
        1,&\text{ if }u=0,
    \end{cases}
\end{align*}
and
\begin{align*}
    \tanh_{s,t}(z,u)&=\frac{\sinh_{s,t}(z,u)}{\cosh_{s,t}(z,u)},\hspace{0.3cm}\coth_{s,t}(z,u)=\frac{\cosh_{s,t}(z,u)}{\sinh_{s,t}(z,u)},\\
    \sech_{s,t}(z,u)&=\frac{\sinh_{s,t}(z,u)}{\cosh_{s,t}(z,u)},\hspace{0.3cm}\csch_{s,t}(z,u)=\frac{\cosh_{s,t}(z,u)}{\sinh_{s,t}(z,u)}
\end{align*}
\end{definition}

\begin{theorem}\label{theo_trigo-hyp}
    \begin{align*}
        \sin_{s,t}(ix,u)&=i\sinh_{s,t}(x,u),\hspace{0.3cm}\sin_{s,t}(x,-u)=\sinh_{s,t}(x,u).\\
        \cos_{s,t}(ix,u)&=\cosh_{s,t}(x,u),\hspace{0.3cm}\cos_{s,t}(x,-u)=\cosh_{s,t}(x,u).\\
        \tan_{s,t}(ix,u)&=i\tanh_{s,t}(x,u),\hspace{0.3cm}\tan_{s,t}(x,-u)=\tanh_{s,t}(x,u).
    \end{align*}
\end{theorem}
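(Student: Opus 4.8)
The plan is to work directly from the power-series definitions, treating the degenerate case $u=0$ separately. When $u=0$ every identity is immediate: $\sin_{s,t}(ix,0)=ix=i\sinh_{s,t}(x,0)$, $\cos_{s,t}(ix,0)=1=\cosh_{s,t}(x,0)$, $\sin_{s,t}(x,-0)=\sin_{s,t}(x,0)=x=\sinh_{s,t}(x,0)$, $\cos_{s,t}(x,-0)=1=\cosh_{s,t}(x,0)$, and the tangent statements follow by forming the corresponding quotients.

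For $u\neq0$ and the identities in the first column, I would substitute $z=ix$ into the defining series of $\sin_{s,t}$ and $\cos_{s,t}$. Using $i^{2n}=(-1)^{n}$ and $i^{2n+1}=i(-1)^{n}$, the alternating factor $(-1)^{n}$ present in the trigonometric series is cancelled, and what remains is precisely $\cosh_{s,t}(x,u)$ in the cosine case and $i\,\sinh_{s,t}(x,u)$ in the sine case.

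For the identities in the second column, I would replace $u$ by $-u$, so that the coefficient $u^{\binom{2n}{2}}$ becomes $(-1)^{\binom{2n}{2}}u^{\binom{2n}{2}}$ and $u^{\binom{2n+1}{2}}$ becomes $(-1)^{\binom{2n+1}{2}}u^{\binom{2n+1}{2}}$. The one computation that matters here is the parity of these binomial coefficients: since $\binom{2n}{2}=n(2n-1)$ and $\binom{2n+1}{2}=n(2n+1)$ are both congruent to $n$ modulo $2$, one gets $(-1)^{\binom{2n}{2}}=(-1)^{\binom{2n+1}{2}}=(-1)^{n}$ — this is exactly the parity identity already exploited in the proof of Theorem \ref{prop_exp_complex_neg}. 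Hence the alternating sign cancels and each trigonometric series collapses to the corresponding hyperbolic one.

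Finally, the tangent relations follow from the sine and cosine relations by taking ratios: $\tan_{s,t}(ix,u)=\sin_{s,t}(ix,u)/\cos_{s,t}(ix,u)=i\sinh_{s,t}(x,u)/\cosh_{s,t}(x,u)=i\tanh_{s,t}(x,u)$, and similarly $\tan_{s,t}(x,-u)=\sin_{s,t}(x,-u)/\cos_{s,t}(x,-u)=\tanh_{s,t}(x,u)$. There is no genuine obstacle in this argument; the only place requiring a moment of care is the parity of the binomial coefficients $\binom{2n}{2}$ and $\binom{2n+1}{2}$, which the paper has already handled.
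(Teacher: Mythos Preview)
Your proposal is correct. The paper actually states this theorem without proof, evidently regarding it as an immediate consequence of the power-series definitions together with the parity computation $(-1)^{\binom{2n}{2}}=(-1)^{\binom{2n+1}{2}}=(-1)^{n}$ already used in Theorem~\ref{prop_exp_complex_neg}; your direct verification from the series is precisely that implicit argument written out in full.
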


\begin{definition}
For all $u,v,x,y,s,t\in\C$, $s\neq0,t\neq0$, we define the $(u,v)$-binomial hyperbolic-Lucas functions as
\begin{align*}
\sinh_{s,t}(x\oplus_{u,v}y)&=\sum_{n=0}^{\infty}\frac{(x\oplus_{u,v}y)_{s,t}^{(2n+1)}}{\brk[c]{2n+1}_{s,t}!},\\
\cosh_{s,t}(x\oplus_{u,v}y)&=\sum_{n=0}^{\infty}\frac{(x\oplus_{u,v}y)_{s,t}^{(2n)}}{\brk[c]{2n}_{s,t}!},
\end{align*}
and
\begin{align*}
    \tanh_{s,t}(x\oplus_{u,v}y)&=\frac{\sinh_{s,t}(x\oplus_{u,v}y)}{\cosh_{s,t}(x\oplus_{u,v}y)},\hspace{0.3cm}\coth_{s,t}(x\oplus_{u,v}y)=\frac{\cosh_{s,t}(x\oplus_{u,v}y)}{\sinh_{s,t}(x\oplus_{u,v}y)}.\\
    \sech_{s,t}(x\oplus_{u,v}y)&=\frac{1}{\cosh_{s,t}(x\oplus_{u,v}y)},\hspace{0.3cm}\csch_{s,t}(x\oplus_{u,v}y)=\frac{1}{\sinh_{s,t}(x\oplus_{u,v}y)}.
\end{align*}
\end{definition}

\begin{theorem}\label{theo_bin_trigo-hyp}
    \begin{align*}
        \sin_{s,t}(x\oplus_{-u,-v}y)&=\sinh_{s,t}(x\oplus_{u,v}y).\\
        \cos_{s,t}(x\oplus_{-u,-v}y)&=\cosh_{s,t}(x\ominus_{u,v}y).
    \end{align*}
\end{theorem}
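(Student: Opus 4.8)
The plan is to read both identities off Theorem~\ref{theo_bin_neg}, which already isolates the effect of the substitution $u\mapsto-u$, $v\mapsto-v$ on the deformed Lucasnomial powers according to the parity of the exponent; the whole argument is then a sign bookkeeping matched against the two series definitions, with no analytic content.

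First I would recall that, by the definition of the $(u,v)$-binomial trigonometric-Lucas functions,
\[
\sin_{s,t}(x\oplus_{-u,-v}y)=\sum_{n=0}^{\infty}(-1)^{n}\frac{(x\oplus_{-u,-v}y)_{s,t}^{(2n+1)}}{\brk[c]{2n+1}_{s,t}!},\qquad
\cos_{s,t}(x\oplus_{-u,-v}y)=\sum_{n=0}^{\infty}(-1)^{n}\frac{(x\oplus_{-u,-v}y)_{s,t}^{(2n)}}{\brk[c]{2n}_{s,t}!}.
\]
Then I would substitute the two formulas of Theorem~\ref{theo_bin_neg}, namely $(x\oplus_{-u,-v}y)_{s,t}^{(2n+1)}=(-1)^{n}(x\oplus_{u,v}y)_{s,t}^{(2n+1)}$ for the odd-indexed terms and $(x\oplus_{-u,-v}y)_{s,t}^{(2n)}=(-1)^{n}(x\ominus_{u,v}y)_{s,t}^{(2n)}$ for the even-indexed terms. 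In each sum the factor $(-1)^{n}$ coming from the trigonometric definition cancels the factor $(-1)^{n}$ produced by Theorem~\ref{theo_bin_neg}, leaving
\[
\sin_{s,t}(x\oplus_{-u,-v}y)=\sum_{n=0}^{\infty}\frac{(x\oplus_{u,v}y)_{s,t}^{(2n+1)}}{\brk[c]{2n+1}_{s,t}!},\qquad
\cos_{s,t}(x\oplus_{-u,-v}y)=\sum_{n=0}^{\infty}\frac{(x\ominus_{u,v}y)_{s,t}^{(2n)}}{\brk[c]{2n}_{s,t}!},
\]
and by the definition of the $(u,v)$-binomial hyperbolic-Lucas functions the right-hand sides are precisely $\sinh_{s,t}(x\oplus_{u,v}y)$ and $\cosh_{s,t}(x\ominus_{u,v}y)$, which is the claim.

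The only point requiring care — and the closest thing to an obstacle — is applying the parity bookkeeping of Theorem~\ref{theo_bin_neg} with the correct surviving exponent: in the odd case the power that reappears is the $(2n+1)$-st, not some reindexed $n$-th power, so that the odd subseries genuinely reassembles into $\sinh_{s,t}$; this reflects $\binom{m}{2}\equiv\lfloor m/2\rfloor\cdot(\text{odd})\equiv n\pmod 2$ for $m\in\{2n,2n+1\}$, as unwound in the proof of Theorem~\ref{theo_bin_neg}. I would also remark that neither $u$ nor $v$ need be nonzero: Theorem~\ref{theo_bin_neg} is stated for all $u,v\in\C$ (the $u=0$ or $v=0$ cases being handled by the limiting conventions of Definition~\ref{def_lucas_theo}), and both sides of the asserted identities are defined purely through the polynomials $(x\oplus_{u,v}y)_{s,t}^{(n)}$, so the equalities hold in the stated generality with no convergence issue beyond matching coefficients of the underlying power series.
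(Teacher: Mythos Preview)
Your proof is correct and is exactly the natural argument: the paper states this theorem without proof, but your use of Theorem~\ref{theo_bin_neg} to cancel the alternating sign against the $(-1)^n$ in the trigonometric series mirrors precisely how the paper earlier established the companion identity $\e_{s,t}(x\oplus_{-u,-v}y)=\cos_{s,t}(x\ominus_{u,v}y)+\sin_{s,t}(x\oplus_{u,v}y)$. Your remark that the odd case of Theorem~\ref{theo_bin_neg} must be read with exponent $(2n+1)$ on the right (the paper's printed ``$(n)$'' is a typo, as its own proof shows) is well taken.
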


\begin{theorem}\label{theo_add_hyp}
For every pair of complex numbers $u,v$, the following are negative sum and difference identities for the Lucas-Pantograph type sine and cosine functions
\begin{enumerate}
    \item $\sinh_{s,t}(x\oplus_{u,v}y)=\sinh_{s,t}(x,u)\cosh_{s,t}(y,v)+\cosh_{s,t}(x,u)\sinh_{s,t}(y,v)$,
    \item  $\sinh_{s,t}(x\ominus_{u,v}y)=\sinh_{s,t}(x,u)\cosh_{s,t}(y,v)-\cosh_{s,t}(x,u)\sinh_{s,t}(y,v)$,
    \item $\cosh_{s,t}(x\oplus_{u,v}y)=\cosh_{s,t}(x,u)\cosh_{s,t}(y,v)+\sinh_{s,t}(x,u)\sinh_{s,t}(y,v)$,
    \item $\cosh_{s,t}(x\ominus_{u,v}y)=\cosh_{s,t}(x,u)\cosh_{s,t}(y,v)-\sinh_{s,t}(x,u)\sinh_{s,t}(y,v)$.
\end{enumerate}
\end{theorem}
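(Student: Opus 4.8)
The plan is to deduce all four identities from the multiplicative law of the Lucas–Pantograph exponential, $\e_{s,t}(x\oplus_{u,v}y)=\e_{s,t}(x,u)\e_{s,t}(y,v)$ (Definition \ref{def_expbin}), in exactly the way the classical hyperbolic addition formulas come from $e^{x+y}=e^{x}e^{y}$. So the first task is to write $\sinh$ and $\cosh$ as even/odd parts of the exponential, both in one variable and in the ``binomial'' variable $x\oplus_{u,v}y$.

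First I would note the one–variable decompositions, immediate from the power series: since $\e_{s,t}(-z,u)=\sum_{n}(-1)^{n}u^{\binom n2}z^{n}/\brk[c]{n}_{s,t}!$, we get $\cosh_{s,t}(z,u)=\tfrac12(\e_{s,t}(z,u)+\e_{s,t}(-z,u))$ and $\sinh_{s,t}(z,u)=\tfrac12(\e_{s,t}(z,u)-\e_{s,t}(-z,u))$, together with the parities $\cosh_{s,t}(-z,u)=\cosh_{s,t}(z,u)$, $\sinh_{s,t}(-z,u)=-\sinh_{s,t}(z,u)$. Because $(x\oplus_{u,v}y)_{s,t}^{(n)}$ is homogeneous of degree $n$ in $(x,y)$ — equivalently, by part 5 of Theorem \ref{theo_propi_uvFbinom} with $z=-1$, one has $((-x)\oplus_{u,v}(-y))^{(n)}=(-1)^{n}(x\oplus_{u,v}y)^{(n)}$ — the same splitting of $\e_{s,t}(x\oplus_{u,v}y)=\sum_{n}(x\oplus_{u,v}y)_{s,t}^{(n)}/\brk[c]{n}_{s,t}!$ into even and odd degrees gives
\begin{align*}
\cosh_{s,t}(x\oplus_{u,v}y)&=\tfrac12\big(\e_{s,t}(x\oplus_{u,v}y)+\e_{s,t}((-x)\oplus_{u,v}(-y))\big),\\
\sinh_{s,t}(x\oplus_{u,v}y)&=\tfrac12\big(\e_{s,t}(x\oplus_{u,v}y)-\e_{s,t}((-x)\oplus_{u,v}(-y))\big).
\end{align*}
Applying Definition \ref{def_expbin} to each exponential on the right, and writing $A=\e_{s,t}(x,u)$, $A'=\e_{s,t}(-x,u)$, $B=\e_{s,t}(y,v)$, $B'=\e_{s,t}(-y,v)$, these become $\cosh_{s,t}(x\oplus_{u,v}y)=\tfrac12(AB+A'B')$ and $\sinh_{s,t}(x\oplus_{u,v}y)=\tfrac12(AB-A'B')$.

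Then I would just expand the claimed right–hand sides of items 1 and 3 with the one–variable formulas: $\sinh_{s,t}(x,u)\cosh_{s,t}(y,v)+\cosh_{s,t}(x,u)\sinh_{s,t}(y,v)=\tfrac14[(A-A')(B+B')+(A+A')(B-B')]=\tfrac12(AB-A'B')$, matching $\sinh_{s,t}(x\oplus_{u,v}y)$; and $\cosh_{s,t}(x,u)\cosh_{s,t}(y,v)+\sinh_{s,t}(x,u)\sinh_{s,t}(y,v)=\tfrac14[(A+A')(B+B')+(A-A')(B-B')]=\tfrac12(AB+A'B')$, matching $\cosh_{s,t}(x\oplus_{u,v}y)$. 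Items 2 and 4 then follow from 1 and 3 by the substitution $y\mapsto-y$ (using $x\ominus_{u,v}y=x\oplus_{u,v}(-y)$) and the parities $\cosh_{s,t}(-y,v)=\cosh_{s,t}(y,v)$, $\sinh_{s,t}(-y,v)=-\sinh_{s,t}(y,v)$.

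There is no genuine difficulty here; the only point needing care is the second paragraph, i.e.\ checking that $\e_{s,t}((-x)\oplus_{u,v}(-y))$ is the alternating series $\sum_{n}(-1)^{n}(x\oplus_{u,v}y)_{s,t}^{(n)}/\brk[c]{n}_{s,t}!$, so that the half–sum and half–difference really do pick out $\cosh$ and $\sinh$. As a shortcut one can avoid the exponential altogether: by Theorem \ref{theo_trigo-hyp} and Theorem \ref{theo_bin_trigo-hyp} every hyperbolic quantity appearing in Theorem \ref{theo_add_hyp} is a trigonometric quantity evaluated at parameters $(-u,-v)$, so the four identities are precisely Theorem \ref{theo_add_sincos} with $u,v$ replaced by $-u,-v$ — one only has to start from the $\ominus$ form in Theorem \ref{theo_bin_trigo-hyp} to land on $\cosh_{s,t}(x\oplus_{u,v}y)$ in items 3 and 4. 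I would present the exponential argument as the main proof and record this conversion as the quick alternative.
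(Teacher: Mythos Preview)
Your proposal is correct. Your main argument---splitting $\e_{s,t}(x\oplus_{u,v}y)$ and $\e_{s,t}((-x)\oplus_{u,v}(-y))$ into even and odd parts via the homogeneity $((-x)\oplus_{u,v}(-y))^{(n)}=(-1)^{n}(x\oplus_{u,v}y)^{(n)}$, applying the product law of Definition~\ref{def_expbin}, and matching the resulting $\tfrac12(AB\pm A'B')$ against the right-hand sides---is clean and self-contained; it is in fact the hyperbolic counterpart of the route the paper takes for Theorem~\ref{theo_add_sincos} itself (via Theorem~\ref{theo_rep_trigobin}).

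The paper, however, does \emph{not} redo that argument here: its proof of Theorem~\ref{theo_add_hyp} is exactly the ``shortcut'' you describe at the end, namely invoking Theorems~\ref{theo_bin_trigo-hyp} and~\ref{theo_trigo-hyp} to rewrite every hyperbolic quantity as a trigonometric one with parameters $-u,-v$, and then quoting Theorem~\ref{theo_add_sincos}. So your alternative is the paper's actual proof, and your main proof is an independent derivation. The advantage of your direct route is that it stands alone and makes the role of the exponential product law transparent; the advantage of the paper's route is economy, since the trigonometric case has already been established. Either would be acceptable, and you have both.
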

\begin{proof}
Follows from Theorems \ref{theo_bin_trigo-hyp} and \ref{theo_add_sincos}, and then Theorem \ref{theo_trigo-hyp}.
\end{proof}

\begin{theorem}
For every pair of complex numbers $u,v$, the following are sum and difference identities for the Lucas-Pantograph type hyperbolic tangent function
\begin{enumerate}
    \item 
    \begin{equation*}
        \tanh_{s,t}(x\oplus_{u,v}y)=\frac{\tanh_{s,t}(x,u)+\tanh_{s,t}(y,v)}{1+\tanh_{s,t}(x,u)\tanh_{s,t}(y,v)}
    \end{equation*}
    \item 
    \begin{equation*}
        \tanh_{s,t}(x\ominus_{u,v}y)=\frac{\tanh_{s,t}(x,u)-\tanh_{s,t}(y,v)}{1-\tanh_{s,t}(x,u)\tanh_{s,t}(y,v)}
    \end{equation*}
\end{enumerate}
\end{theorem}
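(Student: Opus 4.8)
The plan is to derive the hyperbolic tangent addition formulas directly from the corresponding hyperbolic sine and cosine addition formulas established in Theorem \ref{theo_add_hyp}, mimicking the classical algebraic manipulation that expresses $\tanh(x+y)$ in terms of $\tanh x$ and $\tanh y$. By definition, $\tanh_{s,t}(x\oplus_{u,v}y)=\sinh_{s,t}(x\oplus_{u,v}y)/\cosh_{s,t}(x\oplus_{u,v}y)$, so the first step is to substitute the expansions
\[
\sinh_{s,t}(x\oplus_{u,v}y)=\sinh_{s,t}(x,u)\cosh_{s,t}(y,v)+\cosh_{s,t}(x,u)\sinh_{s,t}(y,v)
\]
and
\[
\cosh_{s,t}(x\oplus_{u,v}y)=\cosh_{s,t}(x,u)\cosh_{s,t}(y,v)+\sinh_{s,t}(x,u)\sinh_{s,t}(y,v)
\]
into this quotient.

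Next I would divide both numerator and denominator by $\cosh_{s,t}(x,u)\cosh_{s,t}(y,v)$, assuming this product is nonzero (so that $\tanh_{s,t}(x,u)$ and $\tanh_{s,t}(y,v)$ are defined). The numerator becomes $\tanh_{s,t}(x,u)+\tanh_{s,t}(y,v)$ and the denominator becomes $1+\tanh_{s,t}(x,u)\tanh_{s,t}(y,v)$, which yields the first identity. For the second identity, I would replace $y$ by $-y$, invoking the parity of the hyperbolic functions: $\sinh_{s,t}$ is odd and $\cosh_{s,t}$ is even in their first argument (immediate from the power series, since $\sinh_{s,t}$ has only odd powers and $\cosh_{s,t}$ only even powers of $z$), hence $\tanh_{s,t}(-y,v)=-\tanh_{s,t}(y,v)$, and the sign of $\tanh_{s,t}(y,v)$ flips in both numerator and denominator, giving the difference formula.

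Since this is a purely formal consequence of Theorem \ref{theo_add_hyp} together with the elementary parity observation, there is no serious obstacle; the only point requiring a moment's care is tracking the sign in the hyperbolic denominator — unlike the trigonometric case, here the product $\sinh\sinh$ enters the cosine addition formula with a $+$ sign rather than a $-$, which is exactly why the hyperbolic tangent sum formula carries $1+\tanh\tanh$ in the denominator instead of $1-\tanh\tanh$. One could alternatively deduce everything from the trigonometric tangent identities of Theorem \ref{theo_add_tan} via the substitution rules $\tan_{s,t}(ix,u)=i\tanh_{s,t}(x,u)$ of Theorem \ref{theo_trigo-hyp}, replacing $x,y$ by $ix,iy$; this second route makes the sign change completely transparent as the effect of $i^2=-1$ in the denominator.
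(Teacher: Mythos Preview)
Your proposal is correct. The paper does not supply a proof for this theorem at all; it is stated immediately after Theorem~\ref{theo_add_hyp} with no proof environment, presumably as an evident consequence. Your direct derivation from Theorem~\ref{theo_add_hyp} by dividing numerator and denominator by $\cosh_{s,t}(x,u)\cosh_{s,t}(y,v)$ is exactly the expected argument, and mirrors how the trigonometric case (Theorem~\ref{theo_add_tan}, also unproved in the paper) would follow from Theorem~\ref{theo_add_sincos}. One minor simplification: for the difference formula you need not invoke parity separately, since parts 2 and 4 of Theorem~\ref{theo_add_hyp} already give the $\ominus$ expansions directly. Your alternative route through Theorem~\ref{theo_add_tan} and the substitution $\tan_{s,t}(ix,u)=i\tanh_{s,t}(x,u)$ of Theorem~\ref{theo_trigo-hyp} is in the same spirit as the paper's one-line proof of Theorem~\ref{theo_add_hyp}, which also passes through the trigonometric identities via Theorems~\ref{theo_bin_trigo-hyp} and~\ref{theo_trigo-hyp}.
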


\end{document}